\documentclass[12pt, letterpaper]{article}
\usepackage{dirtytalk}
\usepackage{amsmath}
\usepackage{amsfonts}
\usepackage{amssymb}
\usepackage{amsthm}
\usepackage{breqn}
\usepackage{setspace}
\usepackage{fullpage}
\usepackage{enumitem}
\usepackage{comment}
\usepackage{hyperref}
\usepackage{bbm}
\usepackage{tikz}
\usepackage{enumitem}
\usepackage{mathtools} 
\usepackage[utf8]{inputenc}
\usepackage[english]{babel}
\usepackage{mathtools}
\usetikzlibrary{patterns,arrows,decorations.pathreplacing}
\newtheorem{lemma}{Lemma}
\newtheorem{theorem}{Theorem} 
 
\newtheorem{definition}{Definition}
\newtheorem{claim}{Claim}
\newtheorem{case}{Case}
\newtheorem{conjecture}{Conjecture}

\newtheorem{remark}{Remark}

\newcommand{\N}{\mathcal{N}}

\usepackage[colorinlistoftodos]{todonotes}

\newcommand{\RNum}[1]{\uppercase\expandafter{\romannumeral #1\relax}}
\DeclareMathOperator{\ex}{ex}

\usepackage{authblk}
\setlength{\affilsep}{2em}


\begin{document}
\title{Generalized outerplanar Tur\'an number of short paths}
\author[1,\ 2]{Ervin Gy\H{o}ri} 
\author[2,\ 3]{Addisu Paulos}
\author[2]{Chuanqi Xiao}
\affil[1]{Alfr\'ed R\'enyi Institute of Mathematics, Budapest\par
\texttt{gyori.ervin@renyi.hu}}
\affil[2]{Central European University, Budapest\par
\texttt{chuanqixm@gmail.com}}
\affil[3]{Addis Ababa University, Addis Ababa\par
\texttt{addisu_2004@yahoo.com, addisu.wmeskel@aau.edu.et}}
\date{}
\maketitle
\begin{abstract}
Let $H$ be a graph. The generalized outerplanar Tur\'an number of $H$,  denoted by $f_{\mathcal{OP}}(n,H)$, is the maximum number of copies of $H$ in an $n$-vertex outerplanar graph. Let $P_k$ denotes a path on $k$ vertices. In this paper we give an exact value of $f_{\mathcal{OP}}(n,P_4)$ and a best asymptotic value of $f_{\mathcal{OP}}(n,P_5)$. Moreover, we characterize all outerplanar graphs containing $f_{\mathcal{OP}}(n,P_4)$ copies of $P_4$.   
\end{abstract}
{\bf Keywords:}\ \ Outerplanar graph, Maximal outerplanar graph, Generalized outerplanar Tur\'an number. 
\section{Introduction}
In 1941, Tur\'an~\cite{turan} proved a classical result in the field of extremal graph theory. He determined exactly the maximum number of edges an $n$-vertex $K_r$-free graph may contains. After his result, for a graph $H$, the maximum number of edges in an $n$-vertex $H$-free graph, denoted by $\text{ex}(n,H)$, is named as \textit{Tur\'an number of $H$}. A major breakthrough in the study of the Tur\'an number of graphs came in 1966, with the proof of the famous theorem by Erd\H{o}s, Stone and Simonovits~\cite{erd2, erd1}. They determined an asymptotic value of the Tur\'an number of any non-bipartite graph $H$. In particular, they proved  $\ex(n,H)=\left(1-\frac{1}{\chi(H)-1}\right){n\choose 2}+o(n^2)$, where $\chi(H)$ is the chromatic number of $H$. 

Since then, researchers have been interested working on Tur\'an number of class of bipartite (degenerate) graphs and extremal graph problems with some more generality. Determining the maximum number of copies of $H$ in an $n$-vertex $F$-free graph, denoted by $\text{ex}(n,H,F)$, is among such problems. Since we count the number of copies of a given graph which is not necessarily an edge, such an extremal graph problem is commonly named as \textit{generalized Tur\'an problem}. 
The results on $\text{ex}(n,K_r,K_t)$ by Zykov~\cite{zykov} (and independently by Erd\H{o}s~\cite{erd3}), $\ex(n,C_5,C_3)$ by Gy\H{o}ri~\cite{gyori} and $\text{ex}(n,C_3,C_5)$ by Bollob\'as and Gy\H{o}ri~\cite{bolgy} were sporadic initial contributions.

Recently, some researchers were interested in extremal graph problems in some particular family of graphs, for instance, the family of planar graphs.

The study of generalized extremal problems in the family of planar graphs was initiated by Hakimi and Schmeichel~\cite{haksh} in 1979. Define the \textit{generalized planar Tur\'an number} of a graph $H$, denoted by $f_{\mathcal{P}}(n,H)$, as the maximum number of copies of $H$ in an $n$-vertex planar graph. Hakimi and Schmeichel~\cite{haksh} determined the exact value of  $f_{\mathcal{P}}(n,C_3)$ and $f_{\mathcal{P}}(n,C_4)$. Currently, this topic is active and many exact and best asymptotic values were obtained for different planar graphs. We refer to~\cite{aloncaro, gyori3, gyori2} for more results. 

In a different setting, Matolcsi and  Nagy in~\cite{nagy} initiated the study of the generalized planar Tur\'an number version in the family of outerplanar graphs. Before discussing the problem and our findings, some important notations we have used are mentioned below.  

Let $G$ be a graph. The vertex and edge sets of $G$ are denoted respectively by $V(G)$ and $E(G)$. For a vertex $v\in V(G)$, the degree of $v$ is denoted by $d_G(v)$.  We use the notation $d(v)$ instead of $d_{G}(v)$ if there is no ambiguity on the graph. The maximum and minimum degree of $G$ are denoted by $\Delta(G)$ and $\delta(G)$ respectively. 
We denote a $k$-vertex path with vertices $v_1,v_2,\dots, v_k$ in sequential order by $(v_1,v_2,\dots,v_k)$. We may call a $k$-vertex path as a path of length $k-1$, or simply a $(k-1)$-path. We use the notation $P_k$ to denote a $k$-vertex path. Let $H$ be a graph. The notation $\mathcal{N}(H,G)$ is the number of isomorphic copies of $H$ as a subgraph in $G$. We use the notation $[k]$ to describe the set of the first $k$ positive integers.

\begin{definition}
The \textit{generalized outerplanar Tur\'an number} 
of a graph $H$, denoted by $f_{\mathcal{OP}}(n,H)$, is the maximum number of copies of $H$ in an $n$-vertex outerplanar graph. i.e.,

\[f_{\mathcal{OP}}(n,H)=\max\{\mathcal{N}(H,G): G\ \text{is} \ n\text{-vertex}\ \text{outerplanar graph} \}.\]
\end{definition}
Matolcsi and  Nagy in~\cite{nagy} determined sharp and asymptotically sharp bounds of $f_{\mathcal{OP}}(n,H)$ for certain families of graphs $H$ and described the extremal graphs. In particular, they determined the exponential growth of the generalized outerplanar Tur\'an number of $P_k$,  as a function of $k$. They also determined the exact value of $f_{\mathcal{OP}}(n,P_3)$ and characterized the extremal graphs.

\begin{theorem}(Matolcsi and  Nagy~\cite{nagy})
$$h(k){n\choose 2}<f_{\mathcal{OP}}(n,P_k)\leq 4^k{n\choose 2}, \ \text{where} \lim\limits_{k\longrightarrow\infty}{\sqrt[k]{h(k)}}=4.$$  
\end{theorem}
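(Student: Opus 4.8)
The plan is to prove the two bounds separately and then read off the exponential rate. Throughout I may assume $G$ is edge-maximal, since adding an edge that keeps the graph outerplanar never decreases $\mathcal{N}(P_k,G)$; hence it suffices to bound the count on maximal outerplanar graphs, i.e.\ triangulations of a polygon, which are $2$-connected and whose outer boundary is a Hamilton cycle. I would fix the canonical drawing with all vertices on a circle and all edges drawn as non-crossing chords, so that every copy of $P_k$ becomes a sequence of $k$ vertices joined by $k-1$ pairwise non-crossing chords.

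\textbf{Upper bound.} I would encode each copy of $P_k$ by two pieces of data: an ordered pair of ``anchor'' vertices (for instance its two endpoints), contributing at most $\binom{n}{2}$ after accounting for orientation; and a combinatorial ``type'' recording, step by step, how the intermediate vertices interleave around the circle. The crucial point is that the non-crossing constraint forces this type to behave like a balanced-parenthesis structure: at each of the $k-1$ steps a chord may only turn to one of two sides of the region already cut off, so the number of admissible types is bounded by a Catalan-type quantity, and since the $k$th Catalan number grows like $4^k$, there are at most $4^k$ types. Multiplying gives $\mathcal{N}(P_k,G)\le 4^k\binom{n}{2}$. The technical vehicle for the type bound is a recursion along the dual tree of the triangulation (a tree of maximum degree $3$): a path entering a sub-triangulation through a fixed edge splits its remaining budget between the two child sub-triangulations, which yields the Catalan recurrence with base $4$.

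\textbf{Main obstacle.} The subtle part of the upper bound is the presence of vertices of arbitrarily large degree (e.g.\ an apex adjacent to $\Theta(n)$ vertices), through which many paths pass; a naive ``bounded branching per step'' count fails there. The resolution is that a simple path visits each such vertex only once, and the non-crossing structure confines the path on either side of it to disjoint circular arcs, so each high-degree passage should be charged exactly once (absorbed into the anchor/$\binom{n}{2}$ factor) rather than repeatedly. Setting up this charging so that the effective per-step branching is genuinely the constant $4$ — and not some larger constant — is the main difficulty, and it is precisely what dictates the choice of anchors and of the central edge in the decomposition.

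\textbf{Lower bound and the rate.} For the lower bound I would exhibit, for each fixed $k$, an $n$-vertex maximal outerplanar graph with at least $h(k)\binom{n}{2}$ copies of $P_k$, where $h(k)=4^{k-o(k)}$. The construction combines two ingredients: a fixed ``core'' gadget on $O(k)$ vertices — a recursively triangulated region realizing $4^{k-o(k)}$ distinct paths of length about $k-1$ between two designated ports, making the Catalan growth tight against the upper bound — and, attached at the two ports, two large fans on $\Theta(n)$ vertices each, so that every core path can be prolonged in $\Theta(n)$ independent ways on each side. This multiplies to $h(k)\binom{n}{2}$ copies. Finally, sandwiching $h(k)\binom{n}{2}<f_{\mathcal{OP}}(n,P_k)\le 4^k\binom{n}{2}$ gives $\sqrt[k]{h(k)}\le 4$ from the upper bound and $\sqrt[k]{h(k)}\to 4$ from the construction, so $\lim_{k\to\infty}\sqrt[k]{h(k)}=4$. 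I expect the delicate step here to be certifying that the core gadget's path-count base really tends to $4$, rather than to some constant strictly below $4$, while keeping the gadget outerplanar and compatible with the two attached fans.
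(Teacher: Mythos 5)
This statement is not proved in the paper at all: it is quoted as background from Matolcsi and Nagy~\cite{nagy}, so there is no in-paper proof to compare your attempt against, and your proposal has to be judged on its own merits. Judged that way, it is a plan rather than a proof: the two steps that carry all the difficulty are exactly the ones you flag and defer, and both are genuine gaps.

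For the upper bound, the claim that the non-crossing constraint gives ``at most two choices per step, hence a Catalan-type count of $4^k$ types'' is false as stated: at a vertex of linear degree (the apex of a fan $K_1+P_{n-1}$) a path has $\Theta(n)$ admissible continuations, all lying on the same side of the region already cut off, so the per-step branching is not bounded by any constant. Consequently the encoding of a copy of $P_k$ by (endpoints, type) is never shown to be injective with only $4^k$ types; your proposed repair --- charging each high-degree passage to the $\binom{n}{2}$ anchor factor --- is precisely the content of the theorem and is not set up anywhere. What is actually needed is a proved statement of the form ``for every pair $u,v$ the number of $k$-vertex $u$--$v$ paths is at most $4^k$'' (or an averaged version), established by a recursion over separating chords, and no such recursion is formulated. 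For the lower bound, you assert but do not verify that a recursively triangulated core realizes growth rate exactly $4$, and this verification is the whole point, since the theorem is the limit $\sqrt[k]{h(k)}\to 4$. It can be done: if $\{u,v\}$ is an outer edge of a triangulated polygon and $uvw$ is the triangle on it, every $u$--$v$ path other than the edge itself is a $u$--$w$ path in one sub-polygon followed by a $w$--$v$ path in the other, so the generating functions satisfy $F_{uv}(x)=x^2+F_{uw}(x)F_{wv}(x)/x$; for the triangulation whose dual is the complete binary tree this becomes $F=x^2+F^2/x$, whence $F=\tfrac{x}{2}\bigl(1-\sqrt{1-4x}\bigr)$ and the number of $k$-vertex $u$--$v$ paths is the Catalan number $C_{k-2}=4^{k-o(k)}$; attaching two fans of size $\Theta(n)$ at $u$ and $v$ then yields $h(k)\binom{n}{2}$ copies with $\sqrt[k]{h(k)}\to 4$. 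Without this (or an equivalent) computation the limit statement is unproven. A further small correction: such a gadget has $2^{\Theta(k)}$ vertices, not $O(k)$; this is harmless for fixed $k$ and $n\to\infty$, but it should be said, since it affects for which $n$ the stated inequality can hold.
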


\begin{theorem}(Matolcsi and  Nagy~\cite{nagy}) Suppose that $n\geq 3$. Then $$f_{\mathcal{OP}}(n,P_3)=\frac{n^2+3n-12}{2},$$
and the unique extremal outerplanar graph containing $f_{\mathcal{OP}}(n,P_3)$ $P_3$'s 
is $K_1+P_{n-1}$.
\end{theorem}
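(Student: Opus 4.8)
The plan is to first rewrite the quantity being maximized in terms of the degree sequence. Every copy of $P_3$ in a graph $G$ is determined by its middle vertex together with an unordered pair of its neighbours, so $\mathcal{N}(P_3,G)=\sum_{v\in V(G)}\binom{d(v)}{2}$. Since adding an edge to an outerplanar graph while preserving outerplanarity strictly increases two of the degrees, and hence strictly increases this sum, every extremal configuration must be edge-maximal; thus I may assume $G$ is a maximal outerplanar graph, i.e.\ a triangulation of a polygon, with exactly $2n-3$ edges. For the general upper bound it then suffices to prove the estimate for maximal outerplanar graphs, since an arbitrary outerplanar graph can be turned into one on the same vertex set by repeatedly adding admissible edges, which only increases $\mathcal{N}(P_3,\cdot)$.

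The main engine will be an ear-removal induction on $n$. A maximal outerplanar graph on $n\ge 4$ vertices has a vertex $w$ of degree $2$ whose two neighbours $a,b$ are adjacent, and deleting $w$ yields a maximal outerplanar graph on $n-1$ vertices. Counting the copies of $P_3$ that use $w$ (one centred at $w$, and $d(a)-1$ and $d(b)-1$ copies having $w$ as an endpoint) gives the exact recurrence $\mathcal{N}(P_3,G)=\mathcal{N}(P_3,G-w)+d(a)+d(b)-1$. The inequality that drives the induction is that for every edge $ab$ of an outerplanar graph one has $d(a)+d(b)\le n+2$: indeed $d(a)+d(b)=|N(a)\cup N(b)|+|N(a)\cap N(b)|$, where $|N(a)\cup N(b)|\le n$ and $|N(a)\cap N(b)|\le 2$, the latter because three common neighbours of $a$ and $b$ would create a forbidden $K_{2,3}$ subgraph. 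Feeding this into the recurrence gives $\mathcal{N}(P_3,G)\le \mathcal{N}(P_3,G-w)+n+1$, and since the target value $f(n):=\frac{n^2+3n-12}{2}$ satisfies $f(n)-f(n-1)=n+1$, the induction closes once the base case $n=3$ (where $G=K_3$ and both sides equal $3$) is verified. Evaluating the degree sequence $(n-1,3,\dots,3,2,2)$ of $K_1+P_{n-1}$ confirms that the bound is attained.

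For uniqueness I would track the equality conditions through the same induction. Equality forces, for \emph{every} ear $w$ with neighbours $a,b$, both $\mathcal{N}(P_3,G-w)=f(n-1)$ and $d(a)+d(b)=n+2$; by the inductive hypothesis the former gives $G-w\cong K_1+P_{n-2}$, while the latter forces $|N(a)\cup N(b)|=n$ and $|N(a)\cap N(b)|=2$, so the edge $ab$ dominates all vertices and has exactly two common neighbours. The remaining task is a reconstruction argument: knowing that deleting an ear returns the fan $K_1+P_{n-2}$, one determines the ways an ear can be re-attached along a boundary edge of that fan subject to the dominating-edge condition, and checks that the only outcome is $K_1+P_{n-1}$, obtained by attaching $w$ to the apex and a path-endpoint.

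I expect this reconstruction step in the uniqueness argument to be the main obstacle. The inequality $d(a)+d(b)\le n+2$ makes the numerical bound essentially immediate, but converting the rigid equality conditions into the conclusion that $G$ is exactly the fan requires carefully ruling out the remaining maximal outerplanar graphs whose ear-deletions happen to be fans, and this is where the bulk of the casework will lie.
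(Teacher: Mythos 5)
First, a point of comparison: the paper does not prove this statement at all --- it is quoted as background from Matolcsi and Nagy~\cite{nagy} --- so there is no internal proof to measure yours against, and I can only assess your argument on its own terms. Your bound-and-construction half is correct and complete: the identity $\mathcal{N}(P_3,G)=\sum_{v}\binom{d(v)}{2}$, the reduction to maximal outerplanar graphs, the ear recurrence $\mathcal{N}(P_3,G)=\mathcal{N}(P_3,G-w)+d(a)+d(b)-1$, the inequality $d(a)+d(b)\le n+2$ for an edge $\{a,b\}$ via $K_{2,3}$-freeness, the telescoping identity $f(n)-f(n-1)=n+1$, and the evaluation on the fan $K_1+P_{n-1}$ all check out.

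The genuine gap is in the uniqueness part, and it is not merely unfinished casework: the reconstruction step you describe cannot succeed, because the statement you are inducting on is false at $n=6$. Take the hexagon $v_1v_2\cdots v_6$ triangulated by the central triangle $v_1v_3v_5$. Its degree sequence is $(4,4,4,2,2,2)$, so it contains $3\binom{4}{2}+3\binom{2}{2}=21=f(6)$ copies of $P_3$, exactly as many as the fan $K_1+P_5$. Moreover this graph satisfies every one of your equality conditions: each of its ears $v_2,v_4,v_6$ has neighbours with degree sum $8=n+2$, and deleting any ear leaves the $5$-vertex fan. So when you re-attach an ear to $K_1+P_4$ subject to the dominating-edge condition, the central-triangle hexagon is a second legitimate outcome, and the conclusion \say{the only outcome is $K_1+P_{n-1}$} is simply wrong there. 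This in turn breaks your induction at the next step: proving uniqueness for $n=7$ invokes uniqueness for $n=6$, which does not hold. The argument is repairable --- carry the exceptional graph through the induction and note that every boundary edge of the central-triangle hexagon has endpoint degree sum $6$, so attaching an ear to it yields neighbour degree sum $8\neq 9=n+2$, restoring uniqueness for all $n\ge 7$ --- but as proposed (and indeed as the theorem is quoted, with uniqueness asserted for all $n\ge 3$) the uniqueness claim requires an explicit $n=6$ exception, and your induction must be restructured to accommodate it.
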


In this paper, we determine the exact value of $f_{\mathcal{OP}}(n,P_4)$ and an asymptotic value $f_{\mathcal{OP}}(n,P_5)$. Moreover, we characterize all outerplanar graphs containing $f_{\mathcal{OP}}(n,P_4)$ copies of $P_4$ as a subgraph. The following two theorems summarize our results. 
\begin{theorem}\label{t1}
For $n\geq 8$, $$f_{\mathcal{OP}}(n,P_4)=2n^2-7n+2.$$
Moreover, for $n\geq 9$, the only $n$-vertex outerplanar graph containing $f_{\mathcal{OP}}(n,P_4)$ number of $P_4's$ is $K_1+P_{n-1}.$
\end{theorem}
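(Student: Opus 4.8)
The first step is to reduce the problem to maximal outerplanar graphs (MOPs). Adding an edge to an outerplanar graph, while keeping it outerplanar, never destroys a copy of $P_4$ and (for $n\ge 4$) creates at least one new one; since every outerplanar graph is a spanning subgraph of some MOP, the maximum is attained by a MOP, and an extremal graph cannot be a proper subgraph of a MOP, so it must itself be edge-maximal. For $n\ge 3$ these are exactly the triangulations of a polygon: $2$-connected graphs with $m=2n-3$ edges. Moreover, in an outerplanar graph no $3$-cycle can be separating, so every triangle of a MOP is a face and their number is exactly $n-2$.

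The second step is an exact formula for the number of $P_4$'s via their middle edge. Every $P_4$ has a unique middle edge $uv$, and the copies with middle edge $uv$ are counted by choosing a neighbor of $u$ other than $v$ and a neighbor of $v$ other than $u$, the two being distinct; the number of coincidences is the number of common neighbors of $u$ and $v$, i.e. the number of triangles on $uv$. Summing, and using that the triangle--edge incidences total $3(n-2)$, gives for every $n$-vertex MOP $G$
\[
\mathcal{N}(P_4,G)=\sum_{uv\in E(G)}\bigl(d(u)-1\bigr)\bigl(d(v)-1\bigr)-3(n-2).
\]
As the subtracted term is constant over all MOPs, maximizing $\mathcal{N}(P_4,G)$ is equivalent to maximizing $\Phi(G):=\sum_{uv\in E(G)}(d(u)-1)(d(v)-1)$. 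A direct computation for the $n$-vertex fan $F_n:=K_1+P_{n-1}$ gives $\Phi(F_n)=2n^2-4n-4$, hence $\mathcal{N}(P_4,F_n)=2n^2-7n+2$; so it remains to prove that $F_n$ maximizes $\Phi$ among $n$-vertex MOPs, uniquely for $n\ge 9$.

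I would run this last step by induction on $n$ using ear removal. Every MOP has a degree-$2$ vertex (an ear) $v$ whose two neighbors $x,y$ are adjacent, and $G-v$ is again a MOP. A short edge-by-edge computation of how degrees change yields the clean increment
\[
\Phi(G)-\Phi(G-v)=N_2(x)+N_2(y)-5,
\]
where $N_2(w)=\sum_{z\sim w}d(z)$ is the sum of the degrees of the neighbors of $w$ in $G$. Since $\Phi(F_n)-\Phi(F_{n-1})=4n-6$, the induction closes provided one can locate, in any MOP that is not $F_n$, an ear with $N_2(x)+N_2(y)\le 4n-1$; combined with the inductive bound $\Phi(G-v)\le\Phi(F_{n-1})$ this gives $\Phi(G)\le\Phi(F_n)$, and the equality analysis forces $G=F_n$. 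The base step also uses a universal-vertex characterization: the only MOP with a vertex of degree $n-1$ is $F_n$, because a vertex adjacent to all others forces the remaining graph to be a Hamiltonian path.

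The main obstacle is exactly the existence of such a good ear, and it is genuinely false for small $n$: the ``central-triangle'' triangulation of the hexagon has every ear with $N_2(x)+N_2(y)=24>4\cdot 6-1$ and in fact beats the fan ($\Phi=45>44$), which is precisely why the statement begins at $n\ge 8$. The resolution is to choose the ear relative to a maximum-degree vertex: $F_n$ grows like $\Phi\sim 2n^2$ because of its single large degree, whereas any triangulation spreading the high-degree mass over several vertices has some ear adjacent to two low-degree vertices, and a counting argument against $\sum_v d(v)=4n-6$ shows that for $n\ge 8$ this ear meets the bound. The delicate part is making this quantitative near the threshold and carrying the equality case through the induction so that strictness is preserved; the finitely many boundary configurations, including the gap between the value (valid at $n=8$) and the uniqueness ($n\ge 9$), are then settled by direct verification of the small cases.
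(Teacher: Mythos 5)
Your framework is sound, and modulo notation it is exactly the paper's. Since every triangle of a maximal outerplanar graph bounds a face, $\mathcal{N}(C_3,G)=n-2$ is constant over $n$-vertex MOPs, so maximizing $\mathcal{N}(P_4,G)$ is indeed equivalent to maximizing $\Phi(G)=\sum_{uv\in E(G)}(d(u)-1)(d(v)-1)$; your increment identity $\Phi(G)-\Phi(G-v)=N_2(x)+N_2(y)-5$ is correct, and since the number of $P_4$'s through the ear $v$ equals $N_2(x)+N_2(y)-8$, your target ``find an ear with $N_2(x)+N_2(y)\le 4n-1$'' is literally the paper's ``find a degree-$2$ vertex lying on at most $4n-9$ paths, delete it, and induct.'' Your computations ($\Phi(K_1+P_{n-1})=2n^2-4n-4$, increment $4n-6$ for the fan, the hexagon-with-central-triangle counterexample at $n=6$) all check out. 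So this is the same induction as in the paper, in a cleaner algebraic wrapper.

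The genuine gap is that the existence of such a good ear in every non-fan MOP is the entire content of the theorem, and you do not prove it: the sentence ``choose the ear relative to a maximum-degree vertex; a counting argument against $\sum_v d(v)=4n-6$ shows the ear meets the bound'' is a heuristic, not an argument, and the true situation is more delicate than it suggests. For an ear $v$ with neighbors $x,y$ and second common neighbor $w$, one has $N_2(x)+N_2(y)=(4n-4)+d(w)-\sum_{z\notin N(x)\cup N(y)}d(z)$, so goodness of an ear depends on local structure (the degree of the opposite vertex $w$ and how much degree mass escapes $N(x)\cup N(y)$), not on the global degree sum alone. Moreover, the strict form of your lemma (non-fan $\Rightarrow$ some ear with $N_2(x)+N_2(y)\le 4n-2$) is \emph{false} at $n=8$: the $8$-vertex graph in the paper's Figure~\ref{gf1} (two hubs of degree $5$) has all four of its ears with $N_2(x)+N_2(y)=31=4n-1$, and it ties the fan, $\Phi=92=\Phi(K_1+P_7)$. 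So the lemma you need is non-strict at $n=8$, strict only from $n=9$ on, and the uniqueness step at $n=9$ must additionally show that attaching an ear to this exceptional graph never again achieves increment $4n-6$. This threshold analysis, together with the structural case analysis needed to locate a good ear in an arbitrary MOP, is precisely what the paper's red-edge counting and Claims~\ref{m1}--\ref{m3} (plus the $n=7$ enumeration in Claim~\ref{c8}) are for; until that lemma is actually proved, your proposal is a correct reduction wrapped around an unproven core.
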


\begin{theorem}\label{tms2}
$$f_{\mathcal{OP}}(n,P_5)=\frac{17}{4}n^2+\Theta(n).$$
\end{theorem}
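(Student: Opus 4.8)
The plan is to prove matching bounds, both of the form $\tfrac{17}{4}n^2+O(n)$. Throughout I count a copy of $P_5$ by its unique central vertex: writing $c(v)$ for the number of unordered pairs of $2$-edge paths (``arms'') that start at $v$ and meet only at $v$, one has $\mathcal{N}(P_5,G)=\sum_{v}c(v)$, since a path $v_1v_2v_3v_4v_5$ corresponds to the centre $v_3$ together with the two disjoint arms $v_3v_2v_1$ and $v_3v_4v_5$. For the lower bound I would analyse one explicit family and optimise a parameter; for the upper bound I would reduce to maximal outerplanar graphs and bound $\sum_v c(v)$ using the structural constraints of outerplanarity, the main work being the resulting degree-profile optimisation.

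\textbf{Lower bound.} For $\alpha\in(0,1)$ let $G_\alpha$ be the $n$-vertex outerplanar graph built as follows: take a fan $K_1+P_D$ with hub $c$ and spine $p_1p_2\cdots p_D$, where $D=\lfloor \alpha n\rfloor$, and then insert the remaining $n-1-D$ vertices, each of degree $2$, placing \emph{at most one} of them on each spine edge $p_ip_{i+1}$ and joining it to both $p_i$ and $p_{i+1}$. Since $c$ is already a common neighbour of $p_i$ and $p_{i+1}$, allowing at most one inserted vertex per edge avoids a $K_{2,3}$ on $\{p_i,p_{i+1}\}$, so $G_\alpha$ is outerplanar. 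Only $c$ and its neighbours (the spine vertices) contribute at order $n^2$. Because each inserted vertex has degree $2$ and $\sum_{p_i}(d(p_i)-1)=2n+O(1)$, the hub contributes $\tfrac12\big(\sum_{p_i}(d(p_i)-1)\big)^2=2n^2+O(n)$, while each spine vertex $p_i$ contributes $(1+o(1))\,d(c)\cdot t(p_i)$, where $t(p_i)$ is the number of $2$-paths from $p_i$ avoiding $c$. A short computation gives $\sum_i t(p_i)=2D+6\sigma+O(1)$ with $\sigma=n-1-D$, so the coefficient of $n^2$ equals $2+6\alpha-4\alpha^2$. This is maximised at $\alpha=\tfrac34$, with value $\tfrac{17}{4}$; taking $D=\lfloor 3n/4\rfloor$ yields $\mathcal{N}(P_5,G_{3/4})=\tfrac{17}{4}n^2+\Theta(n)$.

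\textbf{Upper bound.} First, adding an edge never destroys a copy of $P_5$, so the maximum is attained on a maximal outerplanar graph, which for $n\ge 3$ is $2$-connected, has exactly $2n-3$ edges, and has the property that the neighbourhood $N(v)$ of every vertex induces a path. I would bound $\sum_v c(v)$ via $c(v)\le\binom{t_2(v)}{2}$, where $t_2(v)=\sum_{u\sim v}(d(u)-1)$, corrected by subtracting the pairs of arms that share their middle vertex, share their end vertex, or close a triangle; the facts that $N(v)$ is a path and that any two vertices have at most two common neighbours make all of these corrections controllable. Combined with $\sum_v d(v)=2(2n-3)$, this should first give the crude bound $\mathcal{N}(P_5,G)=O(n^2)$, and then, by tracking the leading constant, reduce the problem to maximising exactly the quadratic $2+6\alpha-4\alpha^2$ in the normalised degree $\alpha=\Delta(G)/n$ of a heaviest vertex, yielding $\tfrac{17}{4}n^2+O(n)$ and hence matching the construction.

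\textbf{Main obstacle.} The hard part is the upper bound once the graph has \emph{several} vertices of linear degree, and more generally controlling the aggregate contribution of many medium-degree vertices. One must show a single heavy ``hub'' is optimal: a second hub can help only through its shared arms, but two vertices share at most two neighbours, so such bridge contributions are only $O(n)$ per pair, whereas splitting the degree budget strictly decreases the dominant term $\tfrac12\big(\sum_{u\sim v}(d(u)-1)\big)^2$. Turning this heuristic into a rigorous global inequality—bookkeeping the triangle and shared-neighbour corrections so that no configuration beats the single-parameter optimum $2+6\alpha-4\alpha^2$—is where essentially all the difficulty lies, and the family $G_\alpha$ certifies that the constant $\tfrac{17}{4}$ cannot be improved.
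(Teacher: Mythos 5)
Your lower bound is fine: the family $G_\alpha$ (hub of degree $\lfloor\alpha n\rfloor$ with degree-$2$ vertices inserted on spine edges) is structurally the same construction the paper uses, your arm-counting gives the correct coefficient $2+6\alpha-4\alpha^2$, and the optimization at $\alpha=\tfrac34$ correctly produces $\tfrac{17}{4}$. The centre-vertex identity $\mathcal{N}(P_5,G)=\sum_v c(v)$ and the reduction to maximal outerplanar graphs are also sound.

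The upper bound, however, has a genuine gap, and it is located exactly where you admit "essentially all the difficulty lies" --- so the proposal is a plan, not a proof. Worse, the heuristic you offer for bridging that gap is false. You claim that a second high-degree vertex "can help only through its shared arms," contributing $O(n)$ per pair because two vertices share at most two neighbours. But consider a chord $\{x,y\}$ splitting the graph into sides of sizes $n_1,n_2$: the copies of $P_5$ centred at $x$ with one arm inside side $1$ and the other arm passing through $y$ into side $2$ already number $\Theta(n_1n_2)$, which is quadratic, not linear. These \emph{crossing} copies are precisely what the paper's longest and hardest lemma (Lemma~\ref{mainlema}) is built to control: it shows the number of crossing $P_5$'s with respect to a balanced ("nice") chord is at most $\tfrac{17}{2}n_1n_2+O(n)$, via a sequence of reduction operations (Claim~\ref{ki}) and a two-variable optimization (Claim~\ref{cv1}). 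The bound $\tfrac{17}{2}n_1n_2$ is not slack that can be absorbed: the inductive step closes only because of the exact identity $\tfrac{17}{4}n_1^2+\tfrac{17}{4}n_2^2+\tfrac{17}{2}n_1n_2=\tfrac{17}{4}(n_1+n_2)^2$, so the quadratic interaction terms carry a constant fraction of the final count and must be bounded with the precise constant. Consequently, your proposed reduction of the whole problem to maximizing $2+6\alpha-4\alpha^2$ in the single parameter $\alpha=\Delta(G)/n$ is unjustified: $\sum_v c(v)$ cannot be bounded by a function of the maximum degree alone, because the dominant contributions involve products of degrees of distinct vertices sitting on opposite sides of chords. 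The paper avoids this trap entirely by a divide-and-conquer argument: it finds a balanced chord through a dual-tree lemma (Lemmas~\ref{ooo} and~\ref{oom}), bounds the crossing copies, and recurses on the two sides; nothing in its proof ever reduces to a single-hub degree profile. To repair your approach you would, in effect, have to reprove that crossing lemma, which is the actual content of the paper's upper bound.
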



We state the following two easy lemmas without proof.
\begin{lemma}\label{l2}
Every maximal outerplanar graph with at least $3$ vertices contains a degree 2 vertex.
\end{lemma}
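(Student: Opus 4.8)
The plan is to exploit the standard description of a maximal outerplanar graph $G$ on $n \geq 3$ vertices as a triangulation of a polygon: its outer face is bounded by a Hamiltonian cycle $C = (v_1, v_2, \dots, v_n)$, and the remaining edges are chords of $C$ whose insertion partitions the interior into triangular bounded faces. I would first record this structure, noting in particular that every vertex lies on the outer cycle, so $\delta(G) \geq 2$, and that maximality forces every bounded face to be a triangle. The base case $n = 3$ is immediate, since $G$ is then a single triangle in which every vertex has degree $2$; so I may assume $n \geq 4$.

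The key object is the \emph{weak dual} $T$ of $G$, whose vertices are the triangular bounded faces of $G$, two faces being adjacent when they share a chord. A routine consequence of the polygon-triangulation structure is that $T$ is a tree: it is connected, and each chord separates the polygon into two parts, so every edge of $T$ is a bridge and $T$ is acyclic. Since $n \geq 4$, the triangulation has at least two triangles, so $T$ has at least two vertices and hence at least one leaf. Let the triangle $F = v_{i-1} v_i v_{i+1}$ correspond to a leaf of $T$. Being a leaf, $F$ shares exactly one of its three edges with another triangle; since any two edges of a triangle meet at a common vertex, the two non-shared edges are $v_{i-1} v_i$ and $v_i v_{i+1}$, both edges of the boundary cycle $C$, while $v_{i-1} v_{i+1}$ is the unique chord of $F$.

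I would then argue that the apex $v_i$ of this ear has degree exactly $2$, i.e.\ that its only neighbours are $v_{i-1}$ and $v_{i+1}$. The main obstacle is precisely this step: converting the combinatorial fact ``$F$ is a leaf of $T$'' into the geometric conclusion that $v_i$ admits no further neighbour. Here one invokes outerplanarity directly—the chord $v_{i-1} v_{i+1}$ separates $v_i$ from every other vertex of $C$, and since all vertices lie on the outer boundary, any additional edge incident to $v_i$ would have to reach the far side of this chord and would therefore cross it, contradicting the (outer)planar embedding of $G$. Thus $v_i$ is a vertex of degree $2$, which completes the proof. Everything preceding this final step (the tree structure of the weak dual and the existence of a leaf) is standard and can be stated briefly.
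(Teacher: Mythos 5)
Your proof is correct, but there is no in-paper argument to compare it against: the authors state Lemma~\ref{l2} as one of ``two easy lemmas without proof.'' Your route is the standard one --- view the maximal outerplanar graph as a triangulated polygon, pass to the weak dual $T$, observe that $T$ is a tree, and take a leaf of $T$; the corresponding ear triangle $v_{i-1}v_iv_{i+1}$ has two boundary edges and one chord, forcing $d(v_i)=2$ --- and every step of it is sound. The only place worth tightening is the final sentence: ``any additional edge incident to $v_i$ would have to cross the chord'' is slightly informal, since on its own the exterior of the triangle is not obviously off-limits. The cleanest finish is to note that the bounded face $F=v_{i-1}v_iv_{i+1}$ occupies one angular sector at $v_i$ and the outer face (bounded by the Hamiltonian cycle) occupies the other, so the two edges $v_{i-1}v_i$ and $v_iv_{i+1}$ are consecutive in the rotation at $v_i$ on both sides and a third edge would have to run through the interior of a face, which is impossible. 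Your approach also meshes well with the rest of the paper: the authors invoke exactly this weak-dual tree (with $\Delta(T)\leq 3$) in their proof of Lemma~\ref{oom} to locate a nice chord, and your argument even yields a little more for free --- a tree on at least two vertices has at least two leaves, so every maximal outerplanar graph on at least four vertices contains at least two vertices of degree~$2$, a fact the paper implicitly leans on when it repeatedly selects convenient degree-$2$ vertices in the proof of Theorem~\ref{t1}.
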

\begin{lemma}
For an $n$-vertex, $n\geq 3$, maximal outerplanar graph $G$, $e(G)=2n-3$.
\end{lemma}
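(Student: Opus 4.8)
The plan is to prove the edge count by induction on $n$, using Lemma~\ref{l2} to peel off one vertex at a time. \emph{Base case.} For $n=3$ the only maximal outerplanar graph is the triangle $K_3$, and indeed $e(K_3)=3=2\cdot 3-3$. \emph{Inductive step.} Assume every maximal outerplanar graph on $n-1$ vertices has exactly $2(n-1)-3$ edges, and let $G$ be maximal outerplanar with $n\geq 4$ vertices.

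By Lemma~\ref{l2}, $G$ has a vertex $v$ of degree $2$; call its two neighbours $u$ and $w$. The first step is to show $uw\in E(G)$. I would argue this from the standard structural description of maximal outerplanar graphs as triangulations of a polygon: the outer boundary is a Hamiltonian cycle and every bounded face is a triangle. A degree-$2$ vertex is incident to a single bounded face, which must therefore be the triangle on $u,v,w$, forcing the edge $uw$. (Alternatively, if $uw\notin E(G)$ one could add it while keeping the graph outerplanar, contradicting maximality.) Consequently, deleting $v$ removes exactly the two edges $uv$ and $vw$, so $e(G)=e(G-v)+2$.

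It then remains to check that $G':=G-v$ is again a maximal outerplanar graph on $n-1$ vertices. Outerplanarity is immediate since an induced subgraph of an outerplanar graph is outerplanar. For maximality, observe that removing the degree-$2$ vertex $v$ simply cuts the triangular ear $uvw$ off the polygon, leaving a triangulation of the $(n-1)$-gon on $V(G)\setminus\{v\}$; no chord can be added to a triangulated polygon, so $G'$ is maximal. Applying the inductive hypothesis gives $e(G')=2(n-1)-3$, and hence $e(G)=2(n-1)-3+2=2n-3$, completing the induction.

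The main obstacle is the structural bookkeeping in the inductive step rather than any hard estimate: one must be sure that the two neighbours of a degree-$2$ vertex are adjacent and that deleting that vertex preserves maximality. I expect this to hinge on the characterization of maximal outerplanar graphs as polygon triangulations, which is why I would either invoke that characterization directly or prove the two needed facts (triangular face at a degree-$2$ vertex; maximality of $G-v$) by a short local argument. A fully self-contained alternative avoids Lemma~\ref{l2} altogether: establish that $G$ is $2$-connected with the outer face bounded by an $n$-cycle and all inner faces triangular, then double-count edge--face incidences as $n+3(f-1)=2e(G)$ and combine with Euler's formula $n-e(G)+f=2$ to solve directly for $e(G)=2n-3$.
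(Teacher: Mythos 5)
The paper states this lemma without proof (it is listed among the ``easy lemmas''), so there is no in-paper argument to compare against; what matters is whether your proof stands on its own, and it does. Your induction is sound: the base case $K_3$ is right, and the inductive step correctly peels off a degree-$2$ vertex $v$ (guaranteed by Lemma~\ref{l2}), uses the fact that every bounded face of a maximal outerplanar graph is a triangle to get $uw\in E(G)$, and concludes $e(G)=e(G-v)+2$. This is precisely the vertex-peeling technique the paper itself employs in the proof of Theorem~\ref{t1}, so your argument is stylistically consistent with the paper's toolkit. Two remarks. First, the one step that genuinely needs the polygon-triangulation characterization is the maximality of $G-v$: be careful not to justify ``no chord can be added to a triangulated polygon'' by an edge count, since that would be circular (the edge count is exactly what you are proving); justify it, as you indicate, by the standard characterization that maximal outerplanar graphs are precisely the triangulations of a polygon, under which cutting off the ear $uvw$ visibly leaves a triangulation of an $(n-1)$-gon. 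Second, your Euler-formula alternative (the incidence count $n+3(f-1)=2e(G)$ combined with $n-e(G)+f=2$) is also correct, fully self-contained, and arguably the cleaner route, since it avoids both Lemma~\ref{l2} and the maximality bookkeeping entirely.
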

We will use the following lemma in our proof of the main results. It gives a relation how to count the number of $P_4$'s in a given simple graph $G$. Its proof can be seen in~\cite{gyori2}, but we give the proof for completeness.
\begin{lemma}\label{lemma1}
For any simple graph $G$, the number of $P_4$'s in $G$ is
$$\mathcal{N}(P_4,G)=\sum\limits_{\{x,y\}\in E(G)}(d(x)-1)(d(y)-1)-3\N(C_3,G),$$
where $C_3$ is a cycle of length three.
\end{lemma}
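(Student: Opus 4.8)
The plan is to give the right-hand side a combinatorial reading as a count of length-three walks, and then separate the walks that are genuine paths from the degenerate ones. Fix an edge $\{x,y\}\in E(G)$. The factor $d(x)-1$ counts the neighbors of $x$ other than $y$, and $d(y)-1$ counts the neighbors of $y$ other than $x$, so the product $(d(x)-1)(d(y)-1)$ counts the ordered pairs $(w,z)$ with $w\in N(x)\setminus\{y\}$ and $z\in N(y)\setminus\{x\}$. Each such pair produces a walk $w\,x\,y\,z$ of length three whose central edge is exactly $\{x,y\}$. Thus $\sum_{\{x,y\}\in E(G)}(d(x)-1)(d(y)-1)$ is precisely the number of such walks taken over all choices of central edge.

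Next I would classify these walks by whether the two endpoints coincide. If $w\neq z$, then the four vertices $w,x,y,z$ are pairwise distinct (note $w,z\notin\{x,y\}$ by construction), and $w\,x\,y\,z$ is a genuine copy of $P_4$. Conversely, every copy of $P_4$ in $G$ has a \emph{unique} central edge $\{x,y\}$, and reading it off that edge recovers exactly one ordered pair $(w,z)$ with $w$ on the $x$-side and $z$ on the $y$-side; so each $P_4$ is counted once and only once among the walks with $w\neq z$. This contributes $\mathcal{N}(P_4,G)$ to the sum.

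The remaining walks are those with $w=z$. Here $w$ is adjacent to both $x$ and $y$, so $\{w,x,y\}$ spans a triangle. I would then count, for a fixed copy of $C_3$ on vertices $\{p,q,r\}$, how many degenerate walks it yields: choosing each of its three edges as the central edge and the opposite vertex as the repeated endpoint gives exactly three such walks ($r\,p\,q\,r$, $p\,q\,r\,p$, $q\,p\,r\,q$), and each has a distinct central edge so is counted once overall. Hence every triangle contributes exactly $3$, for a total of $3\,\mathcal{N}(C_3,G)$.

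Combining the two cases gives $\sum_{\{x,y\}\in E(G)}(d(x)-1)(d(y)-1)=\mathcal{N}(P_4,G)+3\,\mathcal{N}(C_3,G)$, and rearranging yields the claim. The only step requiring genuine care is the exactness of the bijective counting: I must confirm that no $P_4$ is counted twice from its central edge (the other orientation $z\,x\,y\,w$ does not arise, since in a path the endpoints are not adjacent to the far central vertex) and that the triangle multiplicity is exactly $3$ rather than some larger number from accidental coincidences; both follow from the uniqueness of the central edge of each configuration.
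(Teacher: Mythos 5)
Your proof is correct and takes essentially the same approach as the paper: both count, for each edge $\{x,y\}$ viewed as the central edge, the $(d(x)-1)(d(y)-1)$ choices of the two outer vertices, and then remove the degenerate choices where the outer vertices coincide, observing that these correspond to triangles, each counted exactly three times (once per edge). The only difference is presentational — you classify all walks globally and split off the coincident-endpoint case, while the paper subtracts the number of common neighbors $d(x,y)$ edge by edge before summing; the underlying counting is identical.
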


\begin{proof}
Consider an edge $\{x,y\}\in E(G)$ and count the number of 3-paths containing $x$ as the second and and $y$ the third vertex of the 3-path. There are $d(x)-1$ possibilities to choose the first vertex and $d(y)-1$ possibilities to choose the last vertex of the path. Since the first and the last vertex of the 3-path need to be different, from the total number of $(d(x)-1)(d(y)-1)$ possibilities we need to subtract the number of triangles containing the edge $\{x,y\}$, which is $d(x,y)$. 

Therefore,
\begin{align*}
\N(P_4,G) &= \sum_{\{x,y\}\in E(G)}\left((d(x)-1)(d(y)-1)-d(x,y)\right)\\& = \sum_{\{x,y\}\in E(G)}(d(x)-1)(d(y)-1)-3\N(C_3,G),
\end{align*}
as each triangle is counted 3 times in the sum.
This completes the proof of Lemma~\ref{lemma1}.
\end{proof}
\section{Generalized outerplanar Tur\'an number of the $P_4$}

\begin{figure}[ht]
\centering
\begin{tikzpicture}[scale=0.2]
\draw[thick](0,20)--(-19.3,-5.2) (0,20)--(-5.2,-19.3)(0,20)--(-10,-17.3)(0,20)--(-17.3,-10)(0,20)--(-14,-14)(0,20)--(-20,0)(0,20)--(-19.3,5.2)(0,20)--(-14,14)(0,20)--(-17.3,10)(0,20)--(-10,17.3);
\draw[thick](0,20)--(10,-17.3)(0,20)--(17.3,-10)(0,20)--(20,0)(0,-20)--(0,20)(0,20)--(17.3,10)(0,20)--(10,17.3);

\draw[thick](0,20)--(19.3,-5.2)(0,20)--(5.2,-19.3)(0,20)--(10,-17.3)(0,20)--(17.3,-10)(0,20)--(14,-14)(0,20)--(20,0)(0,20)--(19.3,5.2)(0,20)--(14,14)(0,20)--(17.3,10)(0,20)--(10,17.3);
\draw[fill=black] (19.3,5.2) circle (15pt);
\draw[fill=black] (-19.3,5.2) circle (15pt);
\draw[fill=black] (-19.3,-5.2) circle (15pt);
\draw[fill=black] (19.3,-5.2) circle (15pt);
\draw[fill=black] (14,14) circle (15pt);
\draw[fill=black] (-14,14) circle (15pt);
\draw[fill=black] (-14,-14) circle (15pt);
\draw[fill=black] (14,-14) circle (15pt);
\draw[fill=black] (5.2,19.3) circle (15pt);
\draw[fill=black] (-5.2,19.3) circle (15pt);
\draw[fill=black] (-5.2,-19.3) circle (15pt);
\draw[fill=black] (5.2,-19.3) circle (15pt);
\draw[thick] (0,0) circle (20cm);
\draw[fill=black] (20,0) circle (15pt);
\draw[fill=black] (-20,0) circle (15pt);
\draw[fill=black] (0,20) circle (15pt);
\draw[fill=black] (0,-20) circle (15pt);
\draw[fill=black] (17.3,10) circle (15pt);
\draw[fill=black] (10,17.3) circle (15pt);
\draw[fill=black] (-17.3,10) circle (15pt);
\draw[fill=black] (-10,17.3) circle (15pt);
\draw[fill=black] (-17.3,-10) circle (15pt);
\draw[fill=black] (-10,-17.3) circle (15pt);
\draw[fill=black] (17.3,-10) circle (15pt);
\draw[fill=black] (10,-17.3) circle (15pt);
\end{tikzpicture}
\caption{An $n$-vertex maximal outerplanar graph $G_n=K_1+P_{n-1}$.}
\label{fig2}
\end{figure}
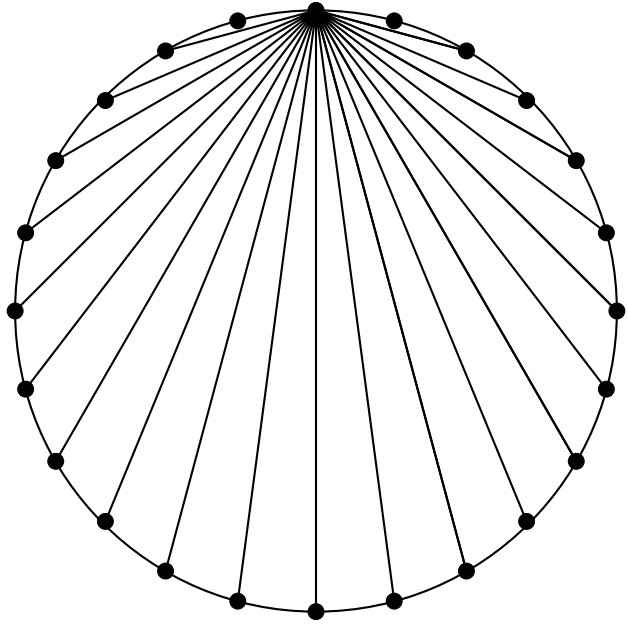

 \begin{proof}[Proof of Theorem~\ref{t1}]
 It is easy to check using Lemma~\ref{lemma1} that an $n$-vertex maximal outerplanar graph $G_n=K_1+P_{n-1}$ shown in Figure~\ref{fig2} contains $2n^2-7n+2$ $P_4$'s. Therefore, $$f_{\mathcal{OP}}(n,P_4)\geq2n^2-7n+2.$$
 
Next we shall prove that $f_{\mathcal{OP}}(n,P_4)\leq2n^2-7n+2.$ Let $G$ be an $n$-vertex maximal outerplanar graph. The main idea of the proof is to find a degree $2$ vertex $w$ in $G$, such that the number of $P_4$'s containing $w$ is at most $4n-9$ and consequently by deleting $w$ and by induction we have 
$$\N(P_4,G)\leq 2(n-1)^2-7(n-1)+2+(4n-9)=2n^2-7n+2.$$
The base case of the induction step,  when $n=7$, is done separately in later in Claim~\ref{c8}.

Now,  let $G$ be a maximal outerplanar graph on $n\geq 8$ vertices, and let $x$ be a degree $2$ vertex in $G$. Since each face (except the unbounded face) in $G$ is a triangle, there exist $u$ and $v$ such that $u,~v$ and $x$ forms a triangle. Let $d(u)=a+2$ and $u_1,u_2,\dots,u_a$ be the vertices which are adjacent to $u$ (except $v$ and $x$) in counterclockwise direction. Let $d(v)=b+2$ and $v_1,v_2,\dots,v_b$ be vertices adjacent to $v$ (except $u$ and $x$) in clockwise direction. Again, since every face of $G$ (except the unbounded face) is a triangle, necessarily the vertices $u_a$ and $v_b$ are identical, see Figure~\ref{figg2}. For the reason that $G$ is a maximal outerplanar graph, $\{u_i,u_{i+1}\}\in E(G)$ and $\{v_j,v_{j+1}\}\in E(G)$, where $i\in[a-1]$ and $j\in [b-1]$, see  the broken red edges in Figure~\ref{figg2}. 

Notice that for $i\in[a-1]$ and $j\in[b-1]$, the arc joining $u_i$ and $u_{i+1}$ in counterclockwise direction and the arc joining $v_j$ and $v_{j+1}$ in clockwise direction may contain an interior vertex. We labeling  such vertices in the following way. For each $i\in[a-1]$, denote the interior vertices (if any) between the vertices $u_i$ and $u_{i+1}$ in counterclockwise direction by $u_i^1,u_i^2,\dots,u_i^{n_i}$. Similarly for each  $j\in[b-1]$, denote the interior vertices (if any) between $v_j$ and $v_{j+1}$ in clockwise direction by $v_j^1, v_j^2, \dots, v_j^{n_j}$. 

\begin{figure}[h]
\centering
\begin{tikzpicture}[scale=0.2]
\draw[black,thick](-5.2,19.3)..controls (-3,17) and (3,17) .. (5.2,19.3);
\draw[thick](-5.2,19.3)--(-17.3,10)(-5.2,19.3)--(-17.3,-10)(-5.2,19.3)--(-10,-17.3)(-5.2,19.3)--(-20,0)(-5.2,19.3)--(0,-20)(-5.2,19.3)--(10,-17.3)(5.2,19.3)--(10,-17.3)(5.2,19.3)--(17.3,-10)(5.2,19.3)--(20,0)(5.2,19.3)--(17.3,10);
\node at (-6,21) {$u$};
\node at (6,21) {$v$};
\node at (0,22) {$x$};
\node at (-12,18) {$u_1$};
\node at (-19,11) {$u_2$};
\node at (-22,0) {$u_3$};
\node at (-19,-11) {$u_4$};
\node at (12,-19) {$u_a=v_b$};
\node at (12,18) {$v_1$};
\node at (19,11) {$v_2$};
\node at (22,0) {$v_3$};
\draw[dashed, red,ultra thick](10,17.3)--(17.3,10)--(20,0)--(17.3,-10)--(10,-17.3)--(0,-20)--(-10,-17.3)--(-17.3,-10)--(-20,0)--(-17.3,10)--(-10,17.3);
\draw[fill=black] (5.2,19.3) circle (15pt);
\draw[fill=black] (-5.2,19.3) circle (15pt);
\draw[thick] (0,0) circle (20cm);
\draw[fill=black] (20,0) circle (15pt);
\draw[fill=black] (-20,0) circle (15pt);
\draw[fill=black] (0,20) circle (15pt);
\draw[fill=black] (0,-20) circle (15pt);
\draw[fill=black] (17.3,10) circle (15pt);
\draw[fill=black] (10,17.3) circle (15pt);
\draw[fill=black] (-17.3,10) circle (15pt);
\draw[fill=black] (-10,17.3) circle (15pt);
\draw[fill=black] (-17.3,-10) circle (15pt);
\draw[fill=black] (-10,-17.3) circle (15pt);
\draw[fill=black] (17.3,-10) circle (15pt);
\draw[fill=black] (10,-17.3) circle (15pt);
\end{tikzpicture}
\label{figg2}
\caption{Structure of a maximal outerplanar graph.}
\end{figure}
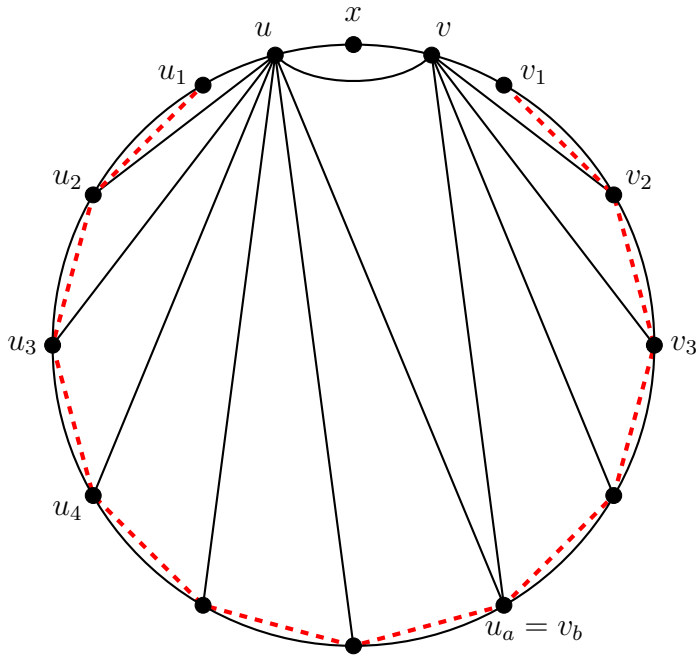
It can be checked that the $3$-paths containing the vertex $x$ are those starting with $(x,u,u_i,\dots)$, $(x,u,v,\dots)$, $(v,x,u,\dots)$, $(x,v,v_i,\dots)$, $(x,v,u,\dots)$ and $(u,x,v,\dots)$. In all cases, vertex $x$ is either an end vertex of the $3$-path or an end vertex of the middle edge of a $3$-paths.
Let $P$ be a $3$-path containing the vertex $x$. We define a \say{\textit{{red edge}}} of $P$ with respect to $x$ as the farthest edge in $P$ from $x$.
Obviously, there are two possibilities based on the position of $x$ in $P$, see Figure~\ref{red}.

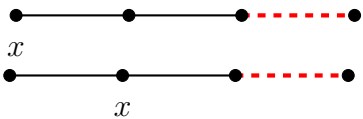
\begin{figure}[h]
\centering
\begin{tikzpicture}[scale=0.15]
\draw[thick](-15,0)--(-5,0)--(5,0);
\draw[dashed, red,ultra thick](5,0)--(15,0);
\draw[fill=black] (-15,0) circle (15pt);
\draw[fill=black] (-5,0) circle (15pt);
\draw[fill=black] (5,0) circle (15pt);
\draw[fill=black] (15,0) circle (15pt);
\node at (-15,-3) {$x$};
\end{tikzpicture}

\begin{tikzpicture}[scale=0.15]
\draw[dashed, red,ultra thick](5,0)--(15,0);
\draw[fill=black] (-15,0) circle (15pt);
\draw[fill=black] (-5,0) circle (15pt);
\draw[fill=black] (5,0) circle (15pt);
\draw[fill=black] (15,0) circle (15pt);
\draw[thick](-15,0)--(-5,0)--(5,0);
\node at (-5,-3) {$x$};
\end{tikzpicture}
\caption{Red edges of a $3$-path $P$ with respect to a vertex $x$. }\label{red}
\end{figure}

Next, we count the number of $3$-paths containing $x$ in $G$ considering the number of possibilities that an edge in $G$ can be a red edge. The following gives us the counting lists for each edge.
\begin{enumerate}
\item The edges $\{x,u\},\{x,v\}$ and $\{u,v\}$ can not be a red edge in a $3$-path that contains $x$. 
\item Each edge $\{u,u_i\}$, $i\in[a-1]$, is a red edge in two $3$-paths containing the vertex $x$. Indeed, $(v,x,u,u_i)$ and $(x,v,u,u_i)$ are the $3$-paths. Similarly each edge $\{v,v_j\}$, $j\in[b-1]$,  is a red edge in two $3$-paths containing the vertex $x$.  
\item Each edge $\{u_i,u_{i+1}\}$, $i\in[a-2]$, is a red edge in two $3$-paths containing the vertex $x$. Indeed, $(x,u,u_i,u_{i+1})$ and $(x,u,u_{i+1},u_i)$ are the $3$-paths. Similarly, each edge $\{v_j,u_{j+1}\}$, $j\in[b-2]$, is a red edge in two $3$-paths containing the vertex $x$. 
\item The edge $\{u,u_a\}$ is a red edge in three $3$-paths containing the vertex $x$. Indeed, the $3$-paths are $(v,x,u,u_a)$, $(x,v,u_a,u)$ and $(x,v,u,u_a)$. Similarly, the edge $\{v,v_b\}$ is a red edge in three $3$-paths containing the vertex $x$. 
\item The edge $\{u_a,u_{a-1}\}$ is a red edge in three $3$-paths containing the vertex $x$. Here it can be checked that the paths are $(x,u,u_a,u_{a-1}),  (x,u,u_{a-1},u_a)$ and $(x,v,u_a,u_{a-1})$. Similarly, the edge $\{v_b,v_{a-1}\}$ is a red edge in three $3$-paths containing the vertex $x$.
\item For $i\in[a-2]$, if there exist interior vertices between $u_i$ and $u_{i+1}$ in counterclockwise direction, then each edge $\{u_{i}^k,u_i^{k+1}\}$, $k\in[n_i-1]$ is not contained in a $3$-path that contains $x$. Moreover, for each vertex $u_i^*$ in $\{u_i^1,u_i^2,\dots,u_i^{n_i}\}$ which is adjacent to $u_{i}$ or $u_{i+1}$, the edge $\{u_{i},u_{i}^*\}$ or $\{u_{i}^*,u_{i+1}\}$ is a red edge in exactly one $3$-path that contains $x$. 
\item For $j\in[b-2]$, if there exist interior vertices between $v_j$ and $v_{j+1}$ in clockwise direction, then each edge $\{v_{j}^k,v_j^{k+1}\}$, $k\in[n_j-1]$ is not contained in a $3$-path that contains $x$. Moreover, for each vertex $v_i^*$ in $\{v_j^1,v_j^2,\dots,v_j^{n_j}\}$ which is adjacent to $v_{j}$ or $v_{j+1}$, the edge $\{v_{j},v_{j}^*\}$ or $\{v_{j}^*,v_{j+1}\}$ is a red edge in one $3$-path that contains $x$. 
\item If there exist interior vertices between $u_{a-1}$ and $u_a$ in counterclockwise direction, then each edge $\{u_a^k,u_a^{k+1}\}$, $k\in[n_a-1]$ is not contained in  $3$-path that contains $x$. Moreover for each vertex $u_a^*\in\{1,2,\dots,n_a\}$ which is adjacent to $u_{a-1}$, the edge $\{u_a,u_a^*\}$ is a red edge in one $3$-path that contains $x$. On the other hand, for each vertex $u_a^*\in\{u_a^1,u_a^2,\dots,u_a^{n_a}\}$ which is adjacent to $u_a$, the edge $\{u_a,u_a^*\}$ is a red edge in two $3$-paths that contains $x$. Indeed that paths are $(x,u,u_a,u_a^*)$ and $(x,v,u_a,u_a^*)$.
\item If there exist interior vertices between $v_{b-1}$ and $v_b$ in counterclockwise direction, then each edge $\{v_b^k,v_b^{k+1}\}$, $k\in[n_b-1]$ is not contained in  $3$-path that contains $x$. Moreover for each vertex $v_b^*\in\{1,2,\dots,n_b\}$ which is adjacent to $v_{b-1}$, the edge $\{v_b,v_b^*\}$ is a red edge in one $3$-path that contains $x$. On the other hand, for each vertex $v_b^*\in\{v_b^1,v_b^2,\dots,v_b^{n_b}\}$ which is adjacent to $v_b$, the edge $\{v_b,v_b^*\}$ is a red edge in two $3$-paths that contains $x$. Indeed that paths are $(x,v,v_b,v_b^*)$ and $(x,u,v_b,v_b^*)$.
 
\end{enumerate}
Notice that the number of edges in an $n$-vertex maximal outerplanar graph is $2n-3$. 

We need the following claims to complete the proof of Theorem \ref{t1}.

\begin{claim}\label{m1}
For each $i\in[a-2]$ and $j\in[b-2]$ there is no interior vertices between $u_{i}$ and $u_{i+1}$ in counterclockwise direction, and no interior vertices between $v_{j}$ and $v_{j+1}$ in clockwise direction. Otherwise, the number of $3$-paths containing $x$ is at most $4n-10$.
\end{claim}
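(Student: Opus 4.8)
The plan is to exploit the red-edge decomposition already in place: since every $3$-path through $x$ has a unique red edge, the number of such paths equals $\sum_{e\in E(G)}r(e)$, where $r(e)$ counts the $3$-paths through $x$ in which $e$ is the red edge. I would split this sum according to the counting lists (1)--(9). The \emph{skeleton} edges, namely the spokes $\{u,u_i\},\{v,v_j\}$, the chords $\{u_i,u_{i+1}\},\{v_j,v_{j+1}\}$, and the four distinguished edges at $u_a=v_b$ treated in (1)--(5), are present no matter how the interior pockets are triangulated, and the multiplicities assigned to them in (2)--(5) do not involve any interior vertex; summing these gives exactly $4(a+b)$. Writing $I_{\mathrm{mid}}$ and $I_{\mathrm{last}}$ for the numbers of interior vertices on the middle arcs (those with $i\in[a-2]$ or $j\in[b-2]$) and on the two last arcs, we have $n=a+b+2+I_{\mathrm{mid}}+I_{\mathrm{last}}$, so the task reduces to showing that the interior contribution $C_{\mathrm{mid}}+C_{\mathrm{last}}$ from (6)--(9) undershoots the budget of $4$ per interior vertex by at least $2$.

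The technical core is a small polygon lemma: in a triangulated polygon with a distinguished boundary edge $st$ and $m$ remaining vertices $w_1,\dots,w_m$ on the complementary arc, at most $m+1$ of the $w_j$ are adjacent to $s$ or to $t$ (a $w_j$ adjacent to both being counted twice). I would prove it by taking the unique triangle $stw_k$ on the edge $st$; the chords $sw_k$ and $w_kt$ confine the neighbors of $s$ among the $w_j$ to $w_1,\dots,w_k$ and those of $t$ to $w_k,\dots,w_m$, so the total is at most $(k-1)+(m-k)+2=m+1$. Applied to a middle arc with $\{s,t\}=\{u_i,u_{i+1}\}$ and $n_i$ interior vertices, this bounds its contribution $d_i+d_{i+1}$ by $n_i+1$, whence $C_{\mathrm{mid}}\le I_{\mathrm{mid}}+m_{\mathrm{mid}}$, where $m_{\mathrm{mid}}$ is the number of middle arcs carrying an interior vertex. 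On a last arc the adjacency to $u_a=v_b$ carries weight $2$ (items (8)--(9)), so its contribution is $d_{a-1}+2d_a\le(n_a+1)+n_a=2n_a+1$; as there are only two last arcs this yields $C_{\mathrm{last}}\le 2I_{\mathrm{last}}+m_{\mathrm{last}}$ with $m_{\mathrm{last}}\le 2$.

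Assembling the bounds, the number of $3$-paths through $x$ equals $4(a+b)+C_{\mathrm{mid}}+C_{\mathrm{last}}$, and substituting $n=a+b+2+I_{\mathrm{mid}}+I_{\mathrm{last}}$ the desired inequality $\le 4n-10$ becomes
\[
m_{\mathrm{mid}}+m_{\mathrm{last}}+2\le 3I_{\mathrm{mid}}+2I_{\mathrm{last}}.
\]
Under the hypothesis $I_{\mathrm{mid}}\ge 1$ I would close this with a two-line case split: if $I_{\mathrm{mid}}=1$ then $m_{\mathrm{mid}}=1$ and it reduces to $m_{\mathrm{last}}\le 2I_{\mathrm{last}}$, which holds since $m_{\mathrm{last}}\le I_{\mathrm{last}}$; and if $I_{\mathrm{mid}}\ge 2$ it follows from $m_{\mathrm{mid}}\le I_{\mathrm{mid}}$, $m_{\mathrm{last}}\le I_{\mathrm{last}}$ and $2\le 2I_{\mathrm{mid}}$.

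The part I expect to demand the most care is the bookkeeping on the last arcs: because adjacency to $u_a=v_b$ contributes two paths rather than one, the last arcs are the \emph{expensive} ones, and it is exactly this asymmetry that forces the separate tracking of $I_{\mathrm{mid}}$ and $I_{\mathrm{last}}$ and the weighted bound $d_{a-1}+2d_a\le 2n_a+1$. A secondary point worth verifying is tightness: a single interior vertex inside one triangular middle pocket gives $d_i=d_{i+1}=1$ and hence exactly $4n-10$ paths, confirming that the constant in the claim cannot be improved and that the case analysis is as sharp as it needs to be.
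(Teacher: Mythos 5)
Your proof is correct, but it follows a genuinely different route from the paper's. The paper argues locally: it looks only at the one offending pocket, and in two cases (at least two interior vertices, so that the edge $\{u_i^1,u_i^2\}$ is a red edge of no $3$-path through $x$; exactly one interior vertex, so that $\{u_i,u_i^1\}$ and $\{u_i^1,u_{i+1}\}$ are red edges of one path each) it charges every other edge of $G$ its generic maximum multiplicity ($2$, or $3$ for the four edges at $u_a=v_b$, $0$ for $\{x,u\},\{x,v\},\{u,v\}$) and invokes $e(G)=2n-3$ to get $2((2n-3)-8)+12=4n-10$, respectively $2((2n-3)-9)+14=4n-10$. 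You instead do global bookkeeping: an exact skeleton count, a polygon lemma bounding $d_s+d_t\le m+1$ in each triangulated pocket, separate tracking of middle and last arcs, and a reduction via the vertex count $n=a+b+2+I_{\mathrm{mid}}+I_{\mathrm{last}}$ (rather than the edge count) to the integer inequality $m_{\mathrm{mid}}+m_{\mathrm{last}}+2\le 3I_{\mathrm{mid}}+2I_{\mathrm{last}}$. The paper's argument is shorter, needing only a deficit of $2$ from a single pocket; yours is more informative, since it accounts for all pockets simultaneously, quantifies what each one costs, and makes the tightness case (one interior vertex in a single triangular middle pocket) transparent. One small inaccuracy worth noting: your skeleton sum is \emph{exactly} $4(a+b)$ only when $a,b\ge 2$; the hypothesis of the claim forces only $\max(a,b)\ge 3$, and if, say, $b=1$, then $v_{b-1}$ does not exist, only three of the four distinguished edges at $u_a=v_b$ are present, and the skeleton contributes $4(a+b)-1$. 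Since your assembly only uses the skeleton count as an upper bound, this slip is harmless, but "equals" should be "is at most."
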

\begin{proof}
 Without loss of generality, assume that there exist interior vertices between $u_{i}$ and $u_{i+1}$ in counterclockwise direction, for some $i\in[a-2]$. We separate the proof into two cases.
 
 \begin{enumerate}
 \item[i.]   There exist at least two interior vertices between $u_{i}$ and $u_{i+1}$ in counterclockwise direction. Then the edge $\{u_i^1,u_i^2\}$ is not contained in any $3$-path containing  $x$. Thus, the number of $3$-paths containing $x$ is at most $2((2n-3)-8)+12=4n-10$.
 
 \item[ii.] There exists one interior vertex between $u_{i}$ and $u_{i+1}$ in counterclockwise direction. Then the edge $\{u_i,u_i^1\}$ and $\{u_{i+1},u_i^1\}$ are contained in only one $3$-path each. Indeed, $(x,u,u_i,u_i^1)$ and $(x,u,u_{i+1},u_i^1)$, respectively. Thus,  the number of $3$-paths containing  $x$ in this case is at most $2((2n-3)-9)+14=4n-10$.
 \end{enumerate}
 This completes the proof of Claim~\ref{m1}.
\end{proof}
\begin{claim}\label{m2}
There is no interior vertex between $u_{a-1}$ and $u_a$.  Moreover, there is no interior vertex between $v_{b-1}$ and $v_{b}$. Otherwise, there exists a degree $2$ vertex in $G$ such that the number of $3$-paths containing this vertex is at most $4n-10$. 
\end{claim}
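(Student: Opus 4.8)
The two last segments play symmetric roles, so I would assume that at least one interior vertex lies strictly between $u_{a-1}$ and $u_a$. Moreover, by Claim~\ref{m1} I may assume that no interior vertex lies in any middle segment, since otherwise $x$ already lies on at most $4n-10$ copies of $P_4$ and there is nothing to prove. Thus every remaining interior vertex of $G$ lies strictly between $u_{a-1}$ and $u_a$ or strictly between $v_{b-1}$ and $v_b$; let $N\ge 1$ be their total number. The first step is to add up the red-edge contributions in items (1)--(9) to count the $3$-paths through $x$. Items (1)--(5) contribute $4a+4b$; the interior edges $\{u_a^k,u_a^{k+1}\}$ and $\{v_b^k,v_b^{k+1}\}$ contribute $0$; and by items (8)--(9) each interior vertex adjacent to $u_a$ or $v_b$ contributes $2$ while each one adjacent to $u_{a-1}$ or $v_{b-1}$ contributes $1$, so the two ears together contribute at most $3N$. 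Since $n=a+b+2+N$, the number of $3$-paths through $x$ is at most
\[
4a+4b+3N=4(n-2-N)+3N=4n-8-N.
\]

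If $N\ge 2$ this is at most $4n-10$, so $w=x$ is the desired degree-$2$ vertex. The delicate case is $N=1$: now the single interior vertex, say $u_a^1$, together with $u_{a-1}$ and $u_a$ forms a triangular ear, and the estimate above only yields $4n-9$ for $x$. Here I would instead take $w=u_a^1$. Because the neighbourhoods of $u$ and $v$ are exactly $\{x,v,u_1,\dots,u_a\}$ and $\{x,u,v_1,\dots,v_b\}$, the vertex $u_a^1$ is adjacent to neither $u$ nor $v$; hence it has degree $2$, and its two neighbours $u_{a-1}$ and $u_a$ have bounded degree ($4$ and $5$ in the generic range, and fewer when $a=2$ or $b=1$).

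It then remains to count the $3$-paths through $w=u_a^1$ directly and to verify the bound. I would group them by the position of $w$ in the path and by which neighbour it leaves through; the only families of linear size are those reaching a high-degree vertex, namely the paths $(u_a^1,u_a,u,\ast)$ and $(u_a^1,u_a,v,\ast)$ through $u_a$ and $(u_a^1,u_{a-1},u,\ast)$ through $u_{a-1}$, contributing about $2a+b$ in total, every other family contributing only a constant. With $a+b=n-3$ this sum is at most $4n-10$ as soon as $n\ge 9$, which proves the claim; the case of an interior vertex between $v_{b-1}$ and $v_b$ is identical by symmetry. The crux is exactly this $N=1$ estimate, where the inequality is tight: at $n=8$ there already exist maximal outerplanar graphs carrying a single such interior vertex whose every degree-$2$ vertex lies on precisely $4n-9$ copies of $P_4$, which is the reason the uniqueness statement in Theorem~\ref{t1} is restricted to $n\ge 9$.
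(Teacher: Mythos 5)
Your proof is correct, and although it follows the same master plan as the paper — use $x$ itself when there are enough extra vertices, pass to the unique interior vertex when there is exactly one, and concede an exceptional configuration at $n=8$ — the accounting is genuinely different. The paper cases on the number of interior vertices in the single ear between $u_{a-1}$ and $u_a$: with at least two it exhibits an edge that can never be red and gets $2\bigl((2n-3)-8\bigr)+12=4n-10$; with exactly one it redoes the red-edge count with respect to $u_{a-1}^1$, locating two edges that are red at most once, and again reaches $4n-10$, except for the eight-vertex graph of Figure~\ref{gf1}, whose total it simply evaluates as $74=2n^2-7n+2$. You instead fold items (1)--(9) into the single estimate that the number of $3$-paths through $x$ is at most $4a+4b+3N=4n-8-N$, where $N$ is the total number of interior vertices in the two ears; this kills every case with $N\ge 2$ at once (including the configuration with one vertex in each ear, which the paper must route through its second case), and isolates $N=1$ as the only delicate case, which you resolve by enumerating the path families through the unique interior vertex $w$: only the three families $(w,u_{a-1},u,\ast)$, $(w,u_a,u,\ast)$, $(w,u_a,v,\ast)$ grow linearly, with sizes $a+1$, $a+1$, $b+1$. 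Your threshold $n\ge 9$ is exactly right: the worst value of this count is $2n+8$ (attained near $a=n-5$, $b=2$), and $2n+8\le 4n-10$ precisely when $n\ge 9$; pinning this down does require checking the degenerate values $b\in\{1,2\}$ and $a\in\{2,3\}$ of the constant terms, which you leave implicit, and your inference that $w$ has degree $2$ really rests on the chord $\{u_{a-1},u_a\}$ together with outerplanarity (non-adjacency to $u$ and $v$ alone is not enough) — but this is no sketchier than the paper's own ``it can be checked'' steps. The two resolutions of the $n=8$ obstruction are likewise different but equally serviceable: the paper computes the total of the exceptional graph, while you observe that all of its degree-$2$ vertices lie on exactly $4n-9$ paths, which still feeds the induction for the inequality and correctly explains why the uniqueness statement in Theorem~\ref{t1} only starts at $n\ge 9$. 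In short, your route buys a cleaner, unified first step; the paper's buys an $N=1$ analysis that avoids degree-by-degree enumeration.
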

\begin{proof}
Without loss of generality, we assume that there exist interior vertices between $u_{a-1}$ and $u_a$ in counterclockwise direction. We distinguish two cases considering the number of interior vertices.
\begin{enumerate}
\item[i.] The number of interior vertices between $u_{a-1}$ and $u_a$ in counterclockwise direction is at least 2. In this case the edge $\{u_{a-1}^1,u_{a-1}^2\}$ is contained in no $3$-path containing $x$. Thus, the number of $3$-paths containing $x$ in $G$ is at most $2((2n-3)-8)+12=4n-10$.  

\item[ii.] There exists only one interior vertices between $u_{a-1}$ and $u_a$ in clockwise direction.

It can be checked that $u_{a-1}^1$ is a degree 2 vertex such that the number of $3$-paths containing $u_{a-1}^1$ is at most $4n-10$. Indeed, we interchange the roles of $x$ with $u_{a-1}^1$, $u$ with $u_a$ and $v$ with $u_{a-1}$.

Since $n\geq 8$, either there exists one interior vertex between $u$ and $u_{a-1}$ in counterclockwise direction and one between $v$ and $u_a$ in clockwise direction or there exist at least two interior vertices between $u$ and $u_{a-1}$ in counterclockwise direction or between $v$ and $u_a$ in clockwise direction. We separate the rest proof into $2$ subcases correspondingly.
\begin{enumerate}
\item  There exists one interior vertex $u_{1}$ between $u$ and $u_{a-1}$ in counterclockwise direction and $v_{1}$ between $v$ and $u_{a}=v_{b}$ in clockwise direction, see Figure \ref{gf1}. It can be checked that in this case, $n=8$ and the number of $3$-paths in $G$ is $74$ which is equal to $2n^{2}-7n+2$. 
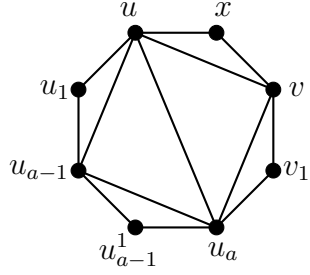
\begin{figure}[h]
\centering
\begin{tikzpicture}[scale=0.07]
\draw[fill=black] (7.7,18.5) circle (40pt);
\draw[fill=black] (18.5,7.7) circle (40pt);
\draw[fill=black] (-7.7,18.5) circle (40pt);
\draw[fill=black] (-18.5,7.7) circle (40pt);
\draw[fill=black] (-18.5,-7.7) circle (40pt);
\draw[fill=black] (-7.7,-18.5) circle (40pt);
\draw[fill=black] (7.7,-18.5) circle (40pt);
\draw[fill=black] (18.5,-7.7) circle (40pt);
\draw[thick](18.5,7.7)--(7.7,18.5)--(-7.7,18.5)--(-18.5,7.7)--(-18.5,-7.7)--(-7.7,-18.5)--(7.7,-18.5)--(18.5,-7.7)--(18.5,7.7);
\draw[thick](-7.7,18.5)--(18.5,7.7)(-18.5,-7.7)--(7.7,-18.5)--(18.5,7.7)(-7.7,18.5)--(-18.5,-7.7)(7.7,-18.5)--(-7.7,18.5);
\node at (9,23) {$x$};
\node at (23,7.7) {$v$};
\node at (23,-7.7) {$v_1$};
\node at (-9,23) {$u$};
\node at (9,-23) {$u_a$};
\node at (-9,-23) {$u_{a-1}^1$};
\node at (-26,-7.7) {$u_{a-1}$};
\node at (-23,7.7) {$u_1$};
\end{tikzpicture}
\caption{An $8$-vertex maximal outerplanar graph.}
\label{gf1}
\end{figure}
\item 
There exist at least two interior vertices between $u$ and $u_{a-1}$ in counterclockwise direction or between $v$ and $u_a$ in clockwise direction. Without loss of generality, we assume that there exist at least two interior vertices between $u$ and $u_{a-1}$ in counterclockwise direction. Hence, all the interior vertices between $u$ and $u_{a-2}$ (counterclockwise direction) are not adjacent to the vertex $u_{a-1}$. Otherwise, it contracts to the fact that $G$ is outerplanar.
It can be seen that the edges $\{u_{a-2},u_{a-3}\}$ and $vx$ are red edges in only one $3$-path containing $u_{a-1}^1$. Indeed the $3$-paths are $(u_{a-1}^1,u_{a-1},u_{a-2},u_{a-3})$ and $(u_{a-1}^1,u_a,v,x)$ respectively. In fact, the vertex $u_{a-3}$ is nothing but $u_1$ if the number of interior vertices between $u$ and $u_{a-1}$ (counterclockwise direction) is two. Notice that the edges $\{u,u_{a-2}\},\{u,u_{a-1}\}, \{u,u_a\}$ and $\{u,v\}$ are red edges in three $3$-paths containing the vertex $u_{a-1}^1$. Therefore the number of $3$-paths containing $u_{a-1}^1$ is at most $2((2n-3)-9)+14=4n-10$. 
\end{enumerate}
\end{enumerate}
This completes the proof of Claim~\ref{m2}.
\end{proof}
\begin{claim}\label{m3}
For each $i\in[a-1]$ and $j\in[b-1]$, if there is no interior vertex between $u_i$ and $u_{i+1}$ and no interior vertex between $v_j$ and $v_{j+1}$. Then, either $\mathcal{N}(P_4,G)=2n^2-7n+2$ or there exists a degree $2$ vertex, say $z$, such that the number of $3$-paths containing $z$ is at most $4n-10$. 
\end{claim}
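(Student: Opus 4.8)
First I would observe that the hypothesis forbids interior vertices on \emph{every} arc between consecutive neighbours of $u$ or of $v$, so all $n$ vertices are accounted for and $G$ is determined entirely by $a$ and $b$. Writing $w := u_a = v_b$, the graph is the union of two fans with apices $u$ and $v$ sharing the rim vertex $w$, glued along the triangle $uvw$ with $x$ hung on the edge $uv$: the outer cycle is $x, u, u_1, \dots, u_{a-1}, w, v_{b-1}, \dots, v_1, v$, the vertex $u$ is joined to $u_1, \dots, u_{a-1}, w, v$, and $v$ is joined to $v_1, \dots, v_{b-1}, w, u$. Here $a + b = n - 2$ with $a, b \geq 1$, and by the symmetry of the roles of $u$ and $v$ I may assume $a \leq b$. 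Inspecting degrees, the only degree $2$ vertices of $G$ are $x$, the vertex $u_1$ (present when $a \geq 2$), and the vertex $v_1$ (present when $b \geq 2$).

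Next I would split on the value of $a$. If $a = 1$, then $v$ is adjacent to all of the remaining $n-1$ vertices, which form the path $x, u, w, v_{b-1}, \dots, v_1$, so $G = K_1 + P_{n-1}$; by the count performed at the start of the proof of Theorem~\ref{t1} this gives $\N(P_4, G) = 2n^2 - 7n + 2$, which is the first alternative.

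For the remaining case $a \geq 2$, the constraints $a \leq b$ and $a + b = n - 2 \geq 6$ force $b \geq 3$, so $v_1$ is a degree $2$ vertex whose neighbours $v$ and $v_2$ span the triangle $v v_1 v_2$. The plan is to count the $P_4$'s through $v_1$ by the position of $v_1$ in the path (an endpoint, or the second vertex) and by which neighbour it uses, exactly as in the red-edge bookkeeping done above for $x$. When $b \geq 4$, so that the rim neighbour $v_2$ of $v_1$ is not adjacent to $w$, this count equals $4b + a + 3$; since $a \geq 2$,
\[
4b + a + 3 \;\leq\; 4a + 4b - 2 \;=\; 4(n-2) - 2 \;=\; 4n - 10,
\]
so the second alternative holds with $z = v_1$. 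The only residual case is $b = 3$, which with $a \leq b$ and $a + b \geq 6$ forces $a = b = 3$ and $n = 8$; there $v_2$ is adjacent to $w$, and a direct recount gives $19 \leq 22 = 4n - 10$. In every case one of the two alternatives holds.

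The step I expect to be the main obstacle is the exact enumeration of $P_4$'s through $v_1$: its value depends on whether $v_2$ is adjacent to the high-degree vertex $w$ (the short-fan case $b = 3$) or not (the long-fan case $b \geq 4$), and it is precisely this boundary behaviour that prevents a single uniform formula and forces the one case $a = b = 3$, $n = 8$ to be checked by hand.
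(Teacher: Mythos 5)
Your proof is correct, and its arithmetic checks out. The structural observation at its base is sound: under the hypothesis the outer cycle is exactly $x,u,u_1,\dots,u_{a-1},w,v_{b-1},\dots,v_1,v$, and counting edges ($n$ on the cycle, plus the chords $\{u,v\}$, $\{u,u_i\}$ for $2\le i\le a$, and $\{v,v_j\}$ for $2\le j\le b$, in total $n+(a+b-1)=2n-3$) shows there are no further chords, so $G$ is the double fan you describe with $a+b=n-2$; your enumeration of the $3$-paths through $v_1$ (namely $a+4b+3$ when $b\ge 4$, and $19$ when $a=b=3$, $n=8$) and the inequality $a+4b+3\le 4a+4b-2=4n-10$ for $a\ge 2$ are all verified. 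Your route differs from the paper's in the main case $a,b\ge 2$. The paper never exploits the fact that $G$ is completely determined: it takes the degree-$2$ vertex $u_1$, relabels so that $u_1$, $u_2$, $u$ play the roles of $x$, $u$, $v$, notes that $v_1$ is then an interior vertex lying between two consecutive neighbours of the new apex $u$, and invokes Claim~\ref{m1} to conclude the count is at most $4n-10$, with no explicit enumeration. That reduction is shorter and reuses machinery already proved; on the other hand, when $a\in\{2,3\}$ the arc containing $v_1$ is the \emph{last} arc of the relabelled configuration, so it technically falls under Claim~\ref{m2} rather than Claim~\ref{m1} as cited --- a small imprecision that your self-contained computation avoids entirely, since you never appeal to the earlier claims at all. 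The price of your approach is exactly the boundary behaviour you flagged: the exact count through $v_1$ depends on whether $v_2$ is adjacent to $w$ (the count is $18$ by the generic formula but actually $19$ when $b=3$), which forces the separate hand-check of $a=b=3$, $n=8$, whereas the paper's argument needs no such case split.
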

\begin{proof}
Notice that both $a$ and $b$ are at least 1. We distinguish two cases to finish the proof of the claim.
\begin{enumerate}
\item Either $a=1$ or $b=1$. Without loss of generality assume that $b=1$. In this case $u_a=v_1$ and by Claim~\ref{m1} $u$ is adjacent to every vertices in $G$ and we exactly get the maximal outerplanar graph shown in Figure~\ref{fig2} and hence $\mathcal{N}(P_4,G)=2n^2-7n+2$.
\item $a,b\geq2$. Now consider the degree 2 vertex $u_1$ and the two adjacent vertices $u_2$ and $u$. Next we interchange the roles of $x$ with $u_1$, $u$ with $u_2$ and $v$ with $u$ and check what happens to our previous computation with this choice of degree 2 vertex $u_1$. Since $v$ is adjacent to $u_a$, $u$ can not be adjacent with any interior vertices between $v$ and $u_a$ in clockwise direction. Clearly the vertex $v_1$ is an interior vertex between $v$ and $u_a$ in clockwise direction. Thus, by Claim~\ref{m1}, the number of $3$-paths containing the vertex $u_1$ is at most $4n-10$.  
\end{enumerate}
This completes the proof of Claim~\ref{m3}.
\end{proof}

Considering Claims \ref{m1}, \ref{m2} and \ref{m3}, the only case that $G$ contains $2n^2-7n+2$ number of $3$-paths is when $a=1$ and $v$ is adjacent to every vertex of $G$ or $b=1$ and $u$ is adjacent to every vertex of $G$. Otherwise, there exists a vertex of degree 2, say $v$, such that the number of $3$-paths containing $v$ is at most $4n-10$. Hence, by deleting $v$ and the induction hypothesis we get 
\begin{align*}
\mathcal{N}(P_4, G)&\leq \mathcal{N}(P_4, G-v)+4n-10\leq 2(n-1)^2-7(n-1)+2+(4n-10)=2n^2-7n+1\\&<2n^2-7n+2.
\end{align*}
Notice that in both cases when $G$ contains $2n^2-7n+2$, the graph we get is isomorphic to $K_1+P_{n-1}$ which is shown in Figure~\ref{fig2}.  

To complete proof of the theorem we need to start of the induction step when $n=7$. 

\begin{claim}\label{c8}
$f_{\mathcal{OP}}(7,P_4)=51=2n^{2}-7n+2.$
\end{claim}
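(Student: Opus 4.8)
The plan is to finish the base case by reducing to a short finite computation. First I would note that $\mathcal{N}(P_4,\cdot)$ is monotone under adding edges, since every copy of $P_4$ in a graph persists in any graph obtained by adding edges on the same vertex set. As every $7$-vertex outerplanar graph is a spanning subgraph of some maximal outerplanar graph on $7$ vertices, it suffices to prove $\mathcal{N}(P_4,G)\le 51$ for every \emph{maximal} outerplanar graph $G$ on $7$ vertices, with the fan $K_1+P_6$ realizing the value $51$.

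Next I would use that a maximal outerplanar graph on $n$ vertices is precisely a triangulation of a convex $n$-gon, and that for $n=7$ there are exactly four such graphs up to isomorphism. These four are conveniently distinguished by their degree sequences: $(6,3,3,3,3,2,2)$ (the fan $K_1+P_6$), $(5,4,3,3,3,2,2)$, $(4,4,4,3,3,2,2)$, and $(5,4,4,3,2,2,2)$. The first three have exactly two vertices of degree $2$ and are the triangulations whose weak dual is a path, while the last has three vertices of degree $2$ and is the unique triangulation containing an interior triangle. I would list an explicit set of diagonals for each one, so that all adjacencies, not merely the degree sequence, are pinned down.

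Then I would apply Lemma~\ref{lemma1}. In any maximal outerplanar graph every triangle bounds a face, since a non-facial triangle would enclose a vertex and violate outerplanarity; hence there are exactly $n-2=5$ triangles and $\mathcal{N}(C_3,G)=5$. Lemma~\ref{lemma1} therefore gives $\mathcal{N}(P_4,G)=\sum_{\{x,y\}\in E(G)}(d(x)-1)(d(y)-1)-15$, so it only remains to evaluate the edge-sum for each of the four graphs. A direct computation yields $\mathcal{N}(P_4,G)=51,\,48,\,46,\,51$ for the four graphs listed above, respectively; the maximum is $51=2\cdot 7^2-7\cdot 7+2$, which proves the claim. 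Note that the value $51$ is attained both by $K_1+P_6$ and by the interior-triangle triangulation; this is consistent with the uniqueness of the extremal graph being asserted only for $n\ge 9$.

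The only genuine obstacle is the bookkeeping: one must be certain that the four triangulations above form a complete and irredundant list of $7$-vertex maximal outerplanar graphs (this matches the known count of polygon triangulations up to symmetry, and is confirmed here by the four pairwise-distinct degree sequences together with the two-versus-three degree-$2$ vertex dichotomy), and one must carry out the four edge-sums of Lemma~\ref{lemma1} without arithmetic slips. Given Lemma~\ref{lemma1}, no further ideas are required.
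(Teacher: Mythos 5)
Your proposal is correct and follows essentially the same route as the paper: both reduce the problem to the four maximal outerplanar graphs on seven vertices and verify by direct counting that their numbers of $P_4$'s are $51$, $51$, $48$, $46$, so the maximum is $51 = 2\cdot 7^2 - 7\cdot 7 + 2$. The only differences are presentational: you make the monotonicity reduction to maximal graphs explicit and organize the computation via Lemma~\ref{lemma1} with $\mathcal{N}(C_3,G)=5$, whereas the paper simply exhibits the four triangulations together with their counts.
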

\begin{proof}
Let $G$ be a $7$-vertex maximal outerplanar graph. 
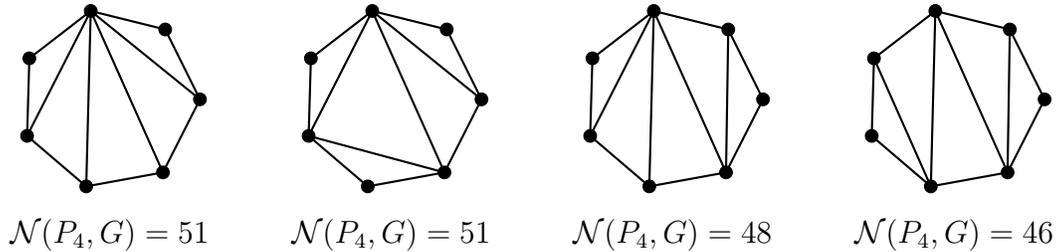
\begin{figure}[h]
\centering
\begin{tikzpicture}[scale=0.06]
\draw[fill=black] (20,0) circle (40pt);
\draw[fill=black] (12.3,15.5) circle (40pt);
\draw[fill=black] (-4.2,19.6) circle (40pt);
\draw[fill=black] (-17.8,9.1) circle (40pt);
\draw[fill=black] (-18.3,-8.1) circle (40pt);
\draw[fill=black] (-5.2,-19.3) circle (40pt);
\draw[fill=black] (11.8,-16.2) circle (40pt);
\draw[thick](20,0)--(12.3,15.5)--(-4.2,19.6)--(-17.8,9.1)--(-18.3,-8.1)--(-5.2,-19.3)--(11.8,-16.2)--(20,0);
\draw[thick](-4.2,19.6)--(-18.3,-8.1)(-4.2,19.6)--(-5.2,-19.3)(-4.2,19.6)--(11.8,-16.2)(-4.2,19.6)--(20,0);
\node at (0,-30) {$\mathcal{N}(P_4,G)=51$};
\end{tikzpicture}\qquad
\begin{tikzpicture}[scale=0.06]
\draw[fill=black] (20,0) circle (40pt);
\draw[fill=black] (12.3,15.5) circle (40pt);
\draw[fill=black] (-4.2,19.6) circle (40pt);
\draw[fill=black] (-17.8,9.1) circle (40pt);
\draw[fill=black] (-18.3,-8.1) circle (40pt);
\draw[fill=black] (-5.2,-19.3) circle (40pt);
\draw[fill=black] (11.8,-16.2) circle (40pt);
\draw[thick](20,0)--(12.3,15.5)--(-4.2,19.6)--(-17.8,9.1)--(-18.3,-8.1)--(-5.2,-19.3)--(11.8,-16.2)--(20,0);
\draw[thick](-4.2,19.6)--(-18.3,-8.1)(-4.2,19.6)--(11.8,-16.2)(-4.2,19.6)--(20,0)(-18.3,-8.1)--(11.8,-16.2);
\node at (0,-30) {$\mathcal{N}(P_4,G)=51$};
\end{tikzpicture}\qquad
\begin{tikzpicture}[scale=0.06]
\draw[fill=black] (20,0) circle (40pt);
\draw[fill=black] (12.3,15.5) circle (40pt);
\draw[fill=black] (-4.2,19.6) circle (40pt);
\draw[fill=black] (-17.8,9.1) circle (40pt);
\draw[fill=black] (-18.3,-8.1) circle (40pt);
\draw[fill=black] (-5.2,-19.3) circle (40pt);
\draw[fill=black] (11.8,-16.2) circle (40pt);
\draw[thick](20,0)--(12.3,15.5)--(-4.2,19.6)--(-17.8,9.1)--(-18.3,-8.1)--(-5.2,-19.3)--(11.8,-16.2)--(20,0);
\draw[thick](-4.2,19.6)--(-18.3,-8.1)(-4.2,19.6)--(-5.2,-19.3)(-4.2,19.6)--(11.8,-16.2)(11.8,-16.2)--(12.3,15.5);
\node at (0,-30) {$\mathcal{N}(P_4,G)=48$};
\end{tikzpicture}\qquad
\begin{tikzpicture}[scale=0.06]
\draw[fill=black] (20,0) circle (40pt);
\draw[fill=black] (12.3,15.5) circle (40pt);
\draw[fill=black] (-4.2,19.6) circle (40pt);
\draw[fill=black] (-17.8,9.1) circle (40pt);
\draw[fill=black] (-18.3,-8.1) circle (40pt);
\draw[fill=black] (-5.2,-19.3) circle (40pt);
\draw[fill=black] (11.8,-16.2) circle (40pt);
\draw[thick](20,0)--(12.3,15.5)--(-4.2,19.6)--(-17.8,9.1)--(-18.3,-8.1)--(-5.2,-19.3)--(11.8,-16.2)--(20,0);
\draw[thick](-4.2,19.6)--(-5.2,-19.3)(-4.2,19.6)--(11.8,-16.2)(-5.2,-19.3)--(-17.8,9.1)(11.8,-16.2)--(12.3,15.5);
\node at (0,-30) {$\mathcal{N}(P_4,G)=46$};
\end{tikzpicture}
\caption{All $7$-vertex maximal outerplanar graphs.}
\end{figure}
It can be checked that there are only four, $7$-vertex maximal outerplanar graphs, see the lists with their number of $3$-paths. There are $2$ graphs attaining the upper bound. This completes the proof of Claim~\ref{c8}.\qedhere  
\end{proof}
This completes the proof of Theorem~\ref{t1}.\qedhere
\end{proof}
\section{Generalized outerplanar Tur\'an number of the $P_{5}$}
\begin{proof}[proof of Theorem~\ref{tms2}]
The following construction verifies the lower bound. 
\begin{definition}
Let $n$ be an even positive integer and $C_n=(v_1,v_2,\dots, v_n)$ be an $n$-vertex cycle. An $n$-vertex maximal outerplanar graph $G_n$(see Figure~\ref{fig2s}) is defined as follows:
\begin{figure}[ht]
\centering
\begin{tikzpicture}[scale=0.2]
\draw[thick](0,20)--(-19.3,-5.2) (0,20)--(-5.2,-19.3)(0,20)--(-10,-17.3)(0,20)--(-17.3,-10)(0,20)--(-14,-14)(0,20)--(-20,0)(0,20)--(-19.3,5.2)(0,20)--(-14,14)(0,20)--(-17.3,10)(0,20)--(-10,17.3);
\draw[thick](0,20)--(10,-17.3)(0,20)--(17.3,-10)(0,20)--(20,0)(0,-20)--(0,20)(0,20)--(17.3,10)(0,20)--(10,17.3);
\draw[black,thick](10,17.3)--(17.3,10)--(20,0)--(17.3,-10)--(10,-17.3)--(0,-20);
\draw[fill=black] (19.3,5.2) circle (15pt);
\draw[fill=black] (-19.3,5.2) circle (15pt);
\draw[fill=black] (-19.3,-5.2) circle (15pt);
\draw[fill=black] (19.3,-5.2) circle (15pt);
\draw[fill=black] (14,14) circle (15pt);
\draw[fill=black] (-14,14) circle (15pt);
\draw[fill=black] (-14,-14) circle (15pt);
\draw[fill=black] (14,-14) circle (15pt);
\draw[fill=black] (5.2,19.3) circle (15pt);
\draw[fill=black] (-5.2,19.3) circle (15pt);
\draw[fill=black] (-5.2,-19.3) circle (15pt);
\draw[fill=black] (5.2,-19.3) circle (15pt);
\draw[thick] (0,0) circle (20cm);
\draw[fill=black] (20,0) circle (15pt);
\draw[fill=black] (-20,0) circle (15pt);
\draw[fill=black] (0,20) circle (15pt);
\draw[fill=black] (0,-20) circle (15pt);
\draw[fill=black] (17.3,10) circle (15pt);
\draw[fill=black] (10,17.3) circle (15pt);
\draw[fill=black] (-17.3,10) circle (15pt);
\draw[fill=black] (-10,17.3) circle (15pt);
\draw[fill=black] (-17.3,-10) circle (15pt);
\draw[fill=black] (-10,-17.3) circle (15pt);
\draw[fill=black] (17.3,-10) circle (15pt);
\draw[fill=black] (10,-17.3) circle (15pt);
\node at (-6,21) {$v_n$};
\node at (-6,-21) {$v_{\frac{n}{2}+1}$};
\node at (6,21) {$v_2$};
\node at (12,18) {$v_3$};
\node at (12,-20) {$v_{\frac{n}{2}-2}$};
\node at (-12,20) {$v_{n-1}$};
\node at (0,23) {$v_1$};
\node at (0,-23) {$v_{\frac{n}{2}}$};
\end{tikzpicture}
\caption{An $n$-vertex maximal outerplanar graph $G_n$.}
\label{fig2s}
\end{figure}
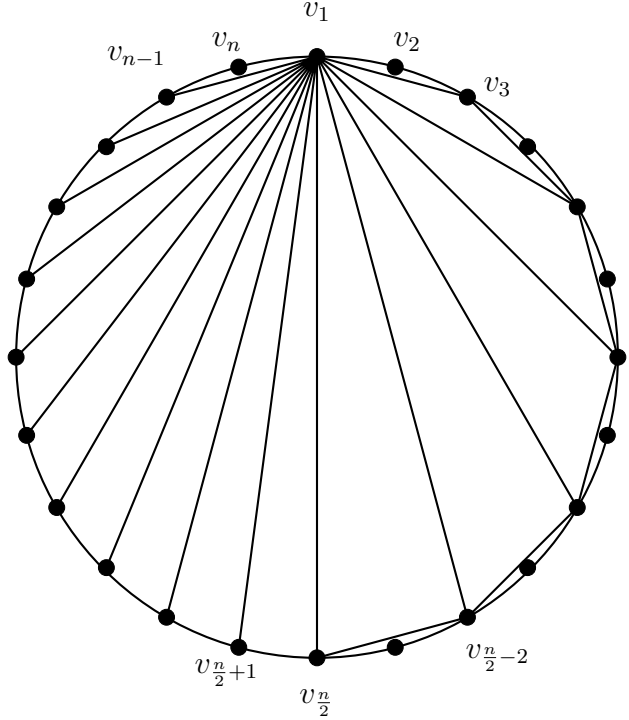
\begin{enumerate}
\item $C_n$ is the outer cycle of $G_n$, 
\item 
\begin{align*}
E(G_n)=&E(C_n)\cup \bigg\{\{v_1,v_i\}: i\in\{\frac{n}{2},\frac{n}{2}+1,\dots,n\}\bigg\}\cup \bigg\{\{v_1,v_i\}: i\in\{3,5,\dots,\frac{n}{2}-2\}\bigg\}\\&\cup \bigg\{\{v_i,v_{i+2}\}: i\in\{3,5,\dots,\frac{n}{2}-2\}\bigg\}    
\end{align*}
\end{enumerate}
\end{definition}

\begin{lemma}\label{lkj}
$\N(P_5,G_n)\approx\frac{17}{4}n^2.$ In other words, $$\lim\limits_{n\longrightarrow\infty}\frac{\N(P_5,G_n)}{\left(\frac{17}{4}n^2\right)}=1.$$
\end{lemma}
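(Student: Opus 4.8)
The plan is to count the copies of $P_5$ by bookkeeping how each path meets the unique high-degree vertex $v_1$. First I would record the relevant degree data of $G_n$. By construction $v_1$ is joined to the whole fan arc $v_{n/2},\dots,v_n$ and to the odd vertices $v_3,v_5,\dots,v_{n/2-2}$, so that $d(v_1)=\tfrac34 n+O(1)$, while every other vertex has bounded degree: the odd zigzag vertices $v_3,\dots,v_{n/2-2}$ have degree $5$ (two cycle edges, the two short chords $v_{j\pm 2}$, and the edge to $v_1$), the even zigzag vertices $v_4,v_6,\dots$ are degree-$2$ ears, the interior fan vertices $v_{n/2+1},\dots,v_{n-1}$ have degree $3$, and only $v_2,v_{n/2},v_n$ need separate, bounded, treatment. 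The qualitative point that drives the whole count is that $v_1$ is the sole vertex of unbounded degree and that $G_n-v_1$ has maximum degree at most $4$.

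Next I would count ordered $5$-vertex paths $(x_1,x_2,x_3,x_4,x_5)$ and divide by $2$, splitting according to the position of $v_1$. By reversal symmetry it suffices to treat the cases where $v_1$ is an endpoint ($x_1$), a near-centre vertex ($x_2$), or the centre ($x_3$), together with the paths avoiding $v_1$. The two cheap buckets are $O(n)$ and contribute only to the $\Theta(n)$ error term: a path with $v_1=x_1$ is a choice of $x_2\in N(v_1)$ followed by three extensions along bounded-degree vertices, giving $O(n)$ copies; and since $G_n-v_1$ has bounded maximum degree it contains only $O(n)$ paths on five vertices.

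The quadratic term comes from the other two buckets. For $v_1=x_2$, an ordered path $(a,v_1,c,d,e)$ is a choice of $a\in N(v_1)$ together with a three-edge path $(v_1,c,d,e)$ leaving $v_1$; the number of such three-edge paths is $3n+O(1)$, the fan arc contributing $\approx n$ (two per interior fan vertex) and the zigzag $\approx 2n$ (eight per odd zigzag vertex), and for each the admissible values of $a$ number $d(v_1)-O(1)$, so this bucket contributes $\tfrac34 n\cdot 3n+O(n)=\tfrac94 n^2+O(n)$. For $v_1=x_3$, an ordered path is an ordered pair of internally disjoint length-two walks from $v_1$; writing $W=\sum_{b\in N(v_1)}(d(b)-1)=2n+O(1)$ for the number of such walks, inclusion–exclusion on the pairs that share a vertex (each correction of size $O(n)$) yields $W^2+O(n)=4n^2+O(n)$. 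Summing the ordered counts, $2\,\N(P_5,G_n)=2\cdot\tfrac94 n^2+4n^2+O(n)=\tfrac{17}{2}n^2+O(n)$, whence $\N(P_5,G_n)=\tfrac{17}{4}n^2+O(n)$ and the stated limit follows.

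The main obstacle is the distinctness bookkeeping: I must guarantee that every counted ordered tuple has five genuinely distinct vertices, and that the two main buckets are not contaminated by degenerate choices in which $a$ coincides with $c$, $d$, or $e$, or in which the two length-two walks share a vertex. All such degeneracies, together with the boundary vertices $v_2,v_{n/2},v_n$ and the two ends of the zigzag, perturb the counts by only $O(n)$ and are therefore immaterial to the asymptotics; but pinning down the two structural constants — that the fan and zigzag contribute $\approx n$ and $\approx 2n$ to the walk count $W$, and $\approx n$ and $\approx 2n$ to the three-edge-path count — is exactly where the value $\tfrac{17}{4}$ is fixed, and is the computational heart of the argument.
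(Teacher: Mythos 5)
Your proposal is correct and follows essentially the same route as the paper: both arguments treat $v_1$ as the unique vertex of unbounded degree, dismiss every $P_5$ in which $v_1$ is not an interior vertex as an $O(n)$ contribution, and split the quadratic count according to whether $v_1$ is the middle vertex (the paper's symmetric case, your $v_1=x_3$ bucket) or a neighbour of the middle (the paper's asymmetric case, your $v_1=x_2/x_4$ bucket). The only difference is bookkeeping: where the paper expands each bucket into cases according to whether the neighbours of $v_1$ lie on the fan side $S_2$ or the zigzag side $S_1$ (with per-vertex extension constants $2$ versus $8$, and degrees $3$ versus $5$), you multiply the aggregate quantities $d(v_1)\approx\tfrac{3}{4}n$, $W\approx 2n$ and the $3$-edge-path count $\approx 3n$, handling distinctness by inclusion--exclusion --- the same constants, organized by the distributive law rather than by a case analysis, and arriving at the identical total $\tfrac{9}{4}n^2+2n^2=\tfrac{17}{4}n^2$.
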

\begin{proof}
It is easy to check that the number of $P_5$'s of the form $(u_1,u_2,u_3,u_4,u_5)$, where $u_i\in V(G_n)$ ($i\in[5]$) and $u_j\neq v_1$ (for all $j\in\{2,3,4\}$)  is linear.

Now we count the number of $P_5$'s containing $v_1$ as an interior vertex. We call such a $P_5$ as a \textit{symmetric} $P_5$ if $v_1$ is a middle vertex (see Figure~\ref{fig2ds}, left). Otherwise, we call the $P_5$ as \textit{asymmetric} $P_5$ (see Figure~\ref{fig2ds}, right). Denote these two vertices adjacent to $v_1$ in the symmetric $P_5$ as $x_1$ and $x_2$. Moreover, denote the asymmetric $P_5$ as $(x_1,v_1,x_2,x_3,x_4)$.
\begin{figure}[ht]
\centering
\begin{tikzpicture}[scale=0.15]
\draw[fill=black] (-15,0) circle (15pt);
\draw[fill=black] (-7.5,0) circle (15pt);
\draw[fill=black] (0,10) circle (15pt);
\draw[fill=black] (15,0) circle (15pt);
\draw[fill=black] (7.5,0) circle (15pt);
\draw[thick](-15,0)--(-7.5,0)--(0,10)--(7.5,0)--(15,0);
\node at (0,12) {$v_1$};
\node at (-7.5,-2) {$x_1$};
\node at (7.5,-2) {$x_2$};
\end{tikzpicture}\qquad\qquad
\begin{tikzpicture}[scale=0.15]
\draw[fill=black] (-7.5,0) circle (15pt);
\draw[fill=black] (0,10) circle (15pt);
\draw[fill=black] (15,0) circle (15pt);
\draw[fill=black] (7.5,0) circle (15pt);
\draw[fill=black] (22.5,0) circle (15pt);
\draw[thick](-7.5,0)--(0,10)--(7.5,0)--(15,0)--(22.5,0);
\node at (0,12) {$v_1$};
\node at (-7.5,-2) {$x_1$};
\node at (7.5,-2) {$x_2$};
\node at (15,-2) {$x_3$};
\node at (22.5,-2) {$x_4$};
\end{tikzpicture}
\caption{Structure of $P_5$'s containing $v_1$ as an interior vertex.}
\label{fig2ds}
\end{figure}
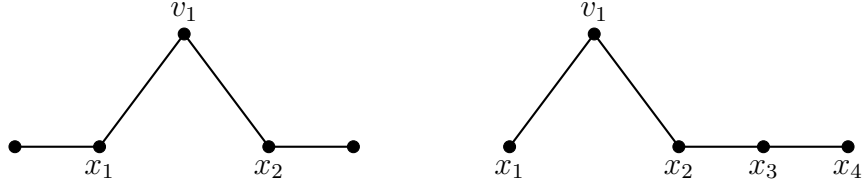

Denote $S_1=\{v_2,v_3,\dots, v_{\frac{n}{2}-1} \}$, $S_2=\{v_{\frac{n}{2}+1},v_{\frac{n}{2}+2},\dots,v_{n-1}\}$. Next we count the number of symmetric and asymmetric $P_5$'s containing $v_1$ as an interior vertex considering the positions of $x_1$ and $x_2$. 
\begin{enumerate}
\item Both $x_1$ and $x_2$ are in $S_1$. 
\begin{itemize}
\item Symmetric $P_5$'s. Since the degree of $x_1$ and $x_2$ are 5, the number of such $P_5$'s is roughly $4\times 4\times {{\frac{n}{4}}\choose 2}=\frac{n^2}{2}.$ 
\item Asymmetric $P_5$'s. It can be checked that the number of $2$-paths of the form $(x_2,x_3,x_4)$ is  8. Since we have two options of doing that, the number of such $P_5$'s is roughly $2\times 8\times {{\frac{n}{4}}\choose 2}=\frac{n^2}{2}.$ 
\end{itemize} 
\item Both $x_1$ and $x_2$ are in $S_2$.
\begin{itemize}
\item Symmetric $P_5$'s. Since the degree of $x_1$ and $x_2$ is 3, the number of such $P_5$'s is roughly $2\times 2\times {{\frac{n}{2}}\choose 2}=\frac{n^2}{2}.$ 
\item Asymmetric $P_5$'s. Clearly the number of $2$-paths of the form $(x_2,x_3,x_4)$ is 2. Since we have two options of doing that, the number of such $P_5$'s is roughly $2\times 2\times {{\frac{n}{2}}\choose 2}=\frac{n^2}{2}.$ 
\end{itemize} 
\item $x_1\in S_1$  and $x_2\in S_2$ or vice versa.
\begin{itemize}
\item Symmetric $P_5$'s. Without loss of generality, we assume that $x_1\in S_1$ and $x_2\in S_2$. Hence, the degree of $x_1$ and $x_2$ are respectively 3 and 5. Thus, the number of symmetric $P_5$'s is roughly $2\times 4\times\frac{n}{2}\times \frac{n}{4}=n^2.$ 
\item Asymmetric $P_5$'s. For the case when $x_2$ is in $S_1$, the number of $2$-paths of the form $(x_2,x_3,x_4)$ is 8. Thus the number of such asymmetric $P_5$'s is roughly $8\times \frac{n}{2}\times \frac{n}{4}=n^2.$ 

On the other hand for the case that $x_2$ is in $S_2$, the number of such asymmetric $P_5$'s is $2\times \frac{n}{2}\times \frac{n}{4}=\frac{n^2}{4}.$ 
\end{itemize} 
\end{enumerate}
Therefor, $\N(P_5,G_n)\approx\frac{17}{4}n^2.$ This completes proof of Lemma~\ref{lkj}.\qedhere
\end{proof}

The following lemma claims that no mater how these $\frac{n}{4}$ degree $2$ vertices are distributed in the outer cycle $C_n$, the maximal outerplanar graph contains roughly $\frac{17}{4}n^2$ $P_5$'s.
\begin{lemma}\label{gen}
Let $n$ be even and $G$ be an $n$-vertex maximal outerplanar with outer cycle $C_n$ and let $G$ contains  $\frac{n}{4}$ degree $2$ vertices and all the remaining vertices are adjacent to a vertex, say $v\in V(C_n)$. Then $$\N(P_5,G)\approx\frac{17}{4}n^2.$$  
\end{lemma}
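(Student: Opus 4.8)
The plan is to mirror the strategy of Lemma~\ref{lkj}: show that the copies of $P_5$ not passing through $v$ as an interior vertex are negligible, and count those that do by splitting them into symmetric and asymmetric copies (according to whether $v$ is the middle vertex or adjacent to an endpoint). The essential point will be that both resulting quadratic counts depend only on the \emph{number} of spine vertices and the \emph{number} of degree-$2$ vertices, and not at all on how the latter are arranged along $C_n$.

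First I would pin down the structure of $G$. Since every non-degree-$2$ vertex is adjacent to $v$ and $G$ is maximal outerplanar, the neighbours of $v$ form a path $u_1,u_2,\dots,u_m$ (a fan), with $u_ju_{j+1}\in E(G)$ for each $j$. Any vertex not adjacent to $v$ has degree $2$, and a short argument (the outer edge between two consecutive degree-$2$ vertices could not bound a triangle unless $n=3$) shows that two degree-$2$ vertices cannot be consecutive on $C_n$; hence each degree-$2$ vertex is an \emph{ear} forming a triangle $u_j z u_{j+1}$ on a single spine edge, with at most one ear per edge (apart from at most two ears incident to $v$ itself, which are $O(1)$ and may be ignored). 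Writing $m$ for the number of spine vertices (so $d(v)=m+O(1)$) and $a=\tfrac n4$ for the number of ears, the vertex count gives $1+m+a+O(1)=n$, so $m\approx\tfrac{3n}4$. In $G-v$ each spine vertex $u_i$ has degree $2+e_i$, where $e_i\in\{0,1,2\}$ is the number of ears incident to $u_i$, each ear has degree $2$, and $\sum_i e_i=2a$.

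The copies of $P_5$ not using $v$ as an interior vertex contribute $O(n)$: the graph $G-v$ has maximum degree at most $4$, so it contains $O(n)$ paths on $5$ vertices, and those having $v$ as an endpoint number at most $d(v)\cdot O(1)=O(n)$. For the symmetric copies $(x_1,y_1,v,y_2,x_2)$, choosing the unordered pair $\{y_1,y_2\}\subseteq N(v)$ and then the two ends contributes, to leading order, $\tfrac12\bigl(\sum_{y\in N(v)}(d(y)-1)\bigr)^2$, since the diagonal correction $\sum_y(d(y)-1)^2$ is $O(n)$. Here $\sum_{y\in N(v)}(d(y)-1)=\sum_i(2+e_i)=2m+2a\approx 2n$, which is manifestly independent of the arrangement of the ears, giving $\approx 2n^2$ symmetric copies.

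The main work is the asymmetric count $(x_1,v,x_2,x_3,x_4)$, which to leading order equals $d(v)$ times the number $N$ of $2$-paths $(x_2,x_3,x_4)$ with $x_2\in N(v)$ avoiding $v$ — equivalently, $2$-paths in $G-v$ starting at a spine vertex. Summing over the middle vertex, $N=\sum_i\sum_{x_3\sim u_i}(d_{G-v}(x_3)-1)$ (adjacency in $G-v$), and a direct evaluation using $d_{G-v}(u_i)=2+e_i$ and $d_{G-v}(\text{ear})=2$ yields the local contribution $2+e_{i-1}+e_i+e_{i+1}$ at $u_i$. Summing over the spine, each $e_j$ is counted three times, so $N\approx 2m+6a\approx 3n$ — again depending only on $m$ and $a$. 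Hence the asymmetric copies number $\approx\tfrac{3n}4\cdot 3n=\tfrac94 n^2$, and the total is $2n^2+\tfrac94 n^2=\tfrac{17}4 n^2$. The crux, and the step I expect to require the most care, is precisely this asymmetric count: because $N$ is only linear in $n$ yet is multiplied by the linear factor $d(v)$, it must be evaluated to exact linear precision (not merely up to $o(n)$), and the telescoping identity $\sum_i(e_{i-1}+e_i+e_{i+1})=3\sum_i e_i$ is what makes the answer insensitive to the distribution of the degree-$2$ vertices. The remaining bookkeeping — boundary effects at the ends of the spine, ears incident to $v$, and the $O(1)$ overcounts from coincidences among the five path vertices — affects only lower-order terms and is absorbed into the $\Theta(n)$ error.
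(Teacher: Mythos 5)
Your proposal is correct, and its first-level structure is the same as the paper's: discard the $P_5$'s not using $v$ as an interior vertex (the paper treats this as obviously linear, while you justify it via the maximum-degree bound on $G-v$, which is cleaner), then split the remaining copies into symmetric and asymmetric ones. Where you genuinely diverge is in how the two quadratic counts are evaluated. The paper enumerates by the type of the terminal (\say{red}) edges: for asymmetric copies it counts how many $P_5$'s have a given spine edge $\{v_i,v_{i+1}\}$ (two per far vertex $v_j$) or a given ear edge $\{v_i,v_i^1\}$ (three per far $v_j$) as terminal edge, getting $\tfrac{9n^2}{8}+\tfrac{9n^2}{8}$; for symmetric copies it counts pairs of terminal edges by type (spine--spine: $4$ paths, ear--ear: $1$, mixed: $2$), getting $\tfrac{9n^2}{8}+\tfrac{n^2}{8}+\tfrac{6n^2}{8}$. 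You instead compute the symmetric count as $\tfrac12\bigl(\sum_{y\in N(v)}(d(y)-1)\bigr)^2+O(n)$ and the asymmetric count as $d(v)\cdot N+O(n)$, evaluating $N$ through the local contributions $2+e_{i-1}+e_i+e_{i+1}$ and the identity $\sum_i(e_{i-1}+e_i+e_{i+1})=3\sum_i e_i=6a$; your totals ($2n^2$ and $\tfrac94 n^2$) agree with the paper's. What your route buys: the insensitivity of the answer to the placement of the degree-$2$ vertices --- the stated purpose of the lemma --- becomes transparent, since both of your counts visibly depend only on the numbers $m$ and $a$ and not on the sequence $(e_i)$, and you correctly isolate the one delicate point, namely that $N$ must be computed to exact linear precision because it is multiplied by $d(v)=\Theta(n)$. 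What the paper's route buys: its per-edge red-edge bookkeeping is exactly the template reused later around a nice chord in the proof of Lemma~\ref{mainlema} (Claim~\ref{cv1}), so the more pedestrian case analysis there is doing double duty as a setup for the main upper-bound argument.
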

\begin{proof}
Denote vertices which are adjacent to $v$ in counterclockwise direction as $v_1,v_2,\dots, v_r$. Obviously $r=\frac{3n}{4}+3$. Denote a degree $2$ vertex (if exist) between $v_i$ and $v_{i+1}$ in the counterclockwise direction by $v_i^1$. Notice that in such case $\{v_i,v_{i+1}\}\in E(G)$. 

Next we count the number of $P_5$'s containing $v$ (but not as a terminal vertex). Precisely speaking we count the number of symmetric and asymmetric $P_5$'s shown in Figure~\ref{fig2dss}.
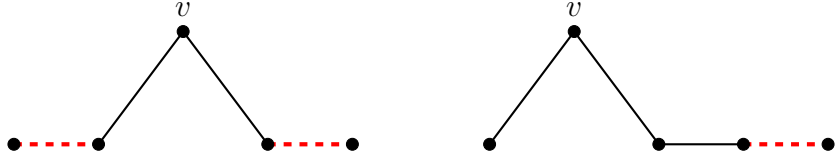
\begin{figure}[ht]
\centering
\begin{tikzpicture}[scale=0.15]
\draw[thick](-7.5,0)--(0,10)--(7.5,0);
\draw[dashed,red,ultra thick](-15,0)--(-7.5,0)(7.5,0)--(15,0);
\draw[fill=black] (-15,0) circle (15pt);
\draw[fill=black] (-7.5,0) circle (15pt);
\draw[fill=black] (0,10) circle (15pt);
\draw[fill=black] (15,0) circle (15pt);
\draw[fill=black] (7.5,0) circle (15pt);
\node at (0,12) {$v$};
\end{tikzpicture}\qquad\qquad
\begin{tikzpicture}[scale=0.15]
\draw[thick](-7.5,0)--(0,10)--(7.5,0)--(15,0);
\draw[dashed,red,ultra thick](15,0)--(22.5,0);
\draw[fill=black] (-7.5,0) circle (15pt);
\draw[fill=black] (0,10) circle (15pt);
\draw[fill=black] (15,0) circle (15pt);
\draw[fill=black] (7.5,0) circle (15pt);
\draw[fill=black] (22.5,0) circle (15pt);
\node at (0,12) {$v$};
\end{tikzpicture}
\caption{$P_5$'s containing $v$ as an interior vertex.}
\label{fig2dss}
\end{figure}
\begin{itemize}
\item Asymmetric $P_5$'s. Here we count in how many ways each edge, which is different from $\{v,v_{i}\}$ $i\in[k]$, is contained in an asymmetric $P_5$ as terminal edge (the red edge which is shown in Figure~\ref{fig2dss}, right). 

For an edge $\{v_i,v_{i+1}\}$, it can be checked that it is contained in two possibilities for each other vertex $v_j\notin\{v_{i-1},v_i,v_{i+1},v_{i+2}\}$. Indeed, $(v_j, v, v_{i-1},v_i,v_{i+1})$ and $(v_j, v, v_{i+2},v_{i+1},v_i)$. The number of such edges is roughly $2n-\frac{3n}{4}-2\times\frac{n}{4}=\frac{3n}{4}.$ This implies, the number of $P_5$'s containing the edges $\{v_i,v_{i+1}\}$ as terminal edge is $\frac{3n}{4}\times 2\times \frac{3n}{4}=\frac{9n^2}{8}.$ 

Let $v_i^1$ be a degree 2 vertex between $v_i$ and $v_{i+1}$. Now we count in how many ways the edge $\{v_i,v_i^1\}$ is possibly contained in an asymmetric $P_5$ as a terminal edge. In this case for each $v_j\notin\{v_{i-1},v_i,v_{i+1}\}$, it can be checked that the edge $\{v_i,v_i^1\}$ is contained in three such $P_5$'s. Indeed, the paths are $(v_j,v,v_{i-1},v_i,v_i^1), (v_j, v,v_{i+1},v_i^1,v_i)$ and $(v_j,v,v_{i+1},v_i,v_{i}^1)$. Notice that the same property hold for the edge $\{v_i^1,v_{i+1}\}$. Clearly the number of such edges id $2\times \frac{n}{4}$. Therefore, the number of $P_5$'s containing the edge $\{v_j,u\}$, where $u$ is a degree two vertex, as a terminal edge is $\frac{3n}{4}\times 3\times \frac{n}{2}=\frac{9n^2}{8}$.
\item Symmetric $P_5$'s. Here $P_5$ is of the form shown in Figure~\ref{fig2dss}(left). In counting such $P_5$'s we consider three possibilities considering the two terminal red edges. Let the red edges are of the form $\{v_i,v_{i+1}\}$ and $\{v_j,v_{j+1}\}$. In this case we have there are four $P_5$'s of such a form. Indeed the paths are $(v_{i+1},v_i,v,v_{j+1},v_j), (v_{i+1},v_i,v,v_{j},v_{j+1}), \\ (v_{i},v_{i+1},v,v_{j+1},v_j) $ and $(v_{i},v_{i+1},v,v_{j},v_{j+1})$. Notice that we have roughly $\frac{3n}{4}\choose 2$ such pairs of edges. Therefore the number of symmetric $P_5$'s such that the terminal red edges are both in $\{v_i,v_{i+1}\}$ is roughly $4\times {\frac{3n}{4}\choose 2}\approx\frac{9n^2}{8}$. 

It is easy to check that number of symmetric $P_5$ such that both terminal red edges are in $\{u,v_i\}$, where $u$ is a degree two vertex, is only one. Roughly such pair of edges is ${\frac{n}{2}\choose 2}\approx \frac{n^2}{8}.$ 

Finally we count the the number of symmetric $P_5$'s such that one terminal red edge is of the form $\{v_i,v_{i+1}\}$ and the other terminal red edge is of the form $\{v_j,v_j^1\}$. Here it can be checked that the number of such $P_5$'s is two, namely, $(v_i,v_{i+1},v,v_{j},v_{j}^1)$ and $(v_{i+1},v_i,v,v_{j},v_{j}^1)$. Clearly the number of such pair of edges is roughly $\frac{3n}{3}\times \frac{n}{2}=\frac{3n^2}{8}.$ Thus, the number of $P_5$'s of such kind is roughly $2\times \frac{3n^2}{8}=\frac{6n^2}{8}.$    
\end{itemize}
Therefor, adding all the symmetric and asymmetric $P_5$'s we have, $$\N(P_5,G)\approx \frac{9n^2}{8}+\frac{9n^2}{8}+\frac{9n^2}{8}+\frac{n^2}{8}+\frac{6n^2}{8}=\frac{17}{4}n^2.$$
This completes proof of Lemma~\ref{gen}.\qedhere
\end{proof}

We need the following lemmas to complete the proof of Theorem~\ref{tms2}.
\begin{lemma}\label{ooo}
Let $T$ be an $n$-vertex tree with $\Delta(T)\leq 3$. Then there is an edge $e\in E(T)$ such that both components of $T-e$ have at least $\frac{n-1}{3}$ vertices. 
\end{lemma}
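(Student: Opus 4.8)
The plan is to reduce the problem to finding a \emph{centroid} of $T$, i.e.\ a vertex $v$ with the property that every component of $T-v$ has at most $n/2$ vertices, and then to delete the edge joining $v$ to its largest component. I would first establish the existence of such a $v$ by an edge-orientation argument that avoids quoting external results. For each edge $e=\{x,y\}$, removing $e$ splits $T$ into a component $T_x\ni x$ and a component $T_y\ni y$ with $|T_x|+|T_y|=n$; orient $e$ toward whichever side is larger (say from $x$ to $y$ when $|T_y|>|T_x|$), and call $e$ \emph{balanced} if $|T_x|=|T_y|$. If a balanced edge exists it already proves the lemma, since then both sides have $n/2\ge(n-1)/3$ vertices, so I may assume every edge is oriented.

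The key observation is that in this orientation every vertex has out-degree at most $1$: an edge leaves $v$ toward a neighbor $w$ exactly when the component on $w$'s side has more than $n/2$ vertices, and two such components would be vertex-disjoint subsets of the $n-1$ vertices other than $v$, so they could not both exceed $n/2$. Since $T$ has $n-1$ edges and the out-degrees sum to $n-1$ with each at most $1$, exactly one vertex $v$ has out-degree $0$. This $v$ is the desired centroid: no edge leaves it, so every component of $T-v$ has at most $n/2$ vertices.

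Next I would exploit the degree bound. Let $d=d_T(v)\le\Delta(T)\le 3$, and let $C_1,\dots,C_d$ be the components of $T-v$, so $|C_1|+\dots+|C_d|=n-1$. The largest component, say $C_1$, therefore satisfies $|C_1|\ge\frac{n-1}{d}\ge\frac{n-1}{3}$. I delete the edge $e$ joining $v$ to its neighbor in $C_1$: one resulting component is exactly $C_1$, with at least $\frac{n-1}{3}$ vertices, and the other contains $v$ and has $n-|C_1|$ vertices. Since $v$ is a centroid, $|C_1|\le n/2$, whence $n-|C_1|\ge n/2\ge\frac{n-1}{3}$ (the last inequality being $3n\ge 2n-2$). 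Hence both components of $T-e$ have at least $\frac{n-1}{3}$ vertices, as required; the cases $n\le 2$ are immediate.

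The one step that needs genuine care is the out-degree count, since this is where the whole argument hinges: I must verify that no vertex can send arrows toward two distinct neighbors, which is exactly the disjointness remark above. Everything afterward is bookkeeping, and the only essential use of the hypothesis $\Delta(T)\le 3$ is the inequality $|C_1|\ge\frac{n-1}{3}$. Note that the bound would degrade to $\frac{n-1}{\Delta(T)}$ for larger maximum degree, so the constant $3$ is intrinsic to the $\Delta\le 3$ assumption rather than an artifact of the proof.
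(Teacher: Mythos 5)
Your proof is correct, and it takes a genuinely different route from the paper's. The paper argues by induction on the number of degree-$3$ vertices: for a path it cuts at an edge incident to a center vertex; otherwise it picks a degree-$3$ vertex $v$ with subtrees $T_1,T_2,T_3$ of sizes $n_1,n_2,n_3$, say $n_1\ge\frac{n-1}{3}$, takes $e=\{v,v_1\}$ when $n_2+n_3\ge\frac{n-1}{3}$, and otherwise replaces $T_2\cup T_3$ by a pendant path of the same total size to decrease the number of degree-$3$ vertices, applies induction to the modified tree, and argues that the edge it returns must lie inside $T_1$. You instead establish the existence of a centroid $v$ (every component of $T-v$ has at most $n/2$ vertices) via the edge-orientation and out-degree-count argument, and then finish by pigeonhole over the at most $d_T(v)\le 3$ components of $T-v$: the largest component $C_1$ satisfies $|C_1|\ge\frac{n-1}{3}$, and cutting the edge from $v$ into $C_1$ leaves the complementary side with $n-|C_1|\ge n/2\ge\frac{n-1}{3}$ vertices. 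Your argument is self-contained and shorter; it avoids the paper's surgery step, which requires the additional (and only briefly justified) verification that the edge produced by the induction hypothesis cannot lie on the artificially attached path; and it makes transparent that the constant $3$ is exactly the degree bound, so the same proof gives $\frac{n-1}{\Delta}$ for trees of maximum degree $\Delta$, whereas the paper's induction is tailored to this particular statement. (Both proofs, like the lemma itself, implicitly assume $n\ge 2$ so that $E(T)\neq\emptyset$; this is harmless in the application, where $T$ is the dual tree of a maximal outerplanar graph.)
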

\begin{proof}
We may assume that $\Delta(T)\geq 2$. Let $k$ be the number of degree 3 vertices in $T$. Clearly the lemma holds when $k=0$. Indeed, if $k=0$, then $T$ is a path. We can take an edge $e$ which is incident to a center vertex of $T$ so that both components of $T-e$ have at least $\frac{n-1}{2}>\frac{n-1}{3}$ vertices.  

Next we apply induction on the number of degree 3 vertices in $T$. Let $k\geq 1$, $v$ be a degree 3 vertex in $T$ and $N(v)=\{v_1,v_2,v_3\}$. For $i\in[3]$, denote the maximal sub-tree of $T$ containing the vertex $v_i$ but not $v$ by $T_i$. Let $v(T_i)=n_i$, $i\in[3]$. Clearly either $n_1$ or $n_2$ or $n_3$ is at least $\frac{n-1}{3}$. 
Without loss of generality assume that $n_1\geq \frac{n-1}{3}$. If $n_2+n_3\geq \frac{n-1}{3}$, we are done with the choice of $e=\{v,v_1\}$. 

Let $n_2+n_3<\frac{n-1}{3}$. Consider the tree $T'$ which is obtained from $T$ by deleting the vertices in $T_2$ and $T_3$ and identifying the vertex $v$ and a terminal vertex (say $r_1$) of an $(n_2+n_3+1)$-vertex path $(r_1,r_2,\dots,r_{n_2+n_3+1})$. Obviously, $v(T')=n$ and the number of degree-$3$ vertices in $T'$ is less than $k$. So by induction, there is an edge $e\in E(T')$ such that both components $T'-e$ has at least $\frac{n-1}{3}$ vertices. For clear reason $e$ can not be an edge of the path $(v_1,v,r_2,\dots,r_{n_2+n_3+1})$. In other words, $e\in E(T_1)$. Therefore with the choice of edge $e$, both components of $T-e$ has at least $\frac{n-1}{3}$ vertices. 

This completes the proof of Lemma~\ref{ooo}.\qedhere
\end{proof}

\begin{definition}
Let $G$ be a maximal outerplanar graph and $C$ be its outer cycle. An edge $\{x,y\}\in E(G)$ is called a chord if $x$ and $y$ are two nonconsecutive vertices of $C$.
\end{definition}
\begin{lemma}\label{oom}
Let $G$ be an $n$-vertex maximal outerplanar graph. Then there is a chord $\{x,y\}$ in $G$ such that the number of interior vertices between $x$ and $y$ in counterclockwise and clockwise directions are greater than $\frac{n}{3}$. 
\end{lemma}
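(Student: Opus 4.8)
The plan is to build a maximal outerplanar graph's chords into a binary-tree structure and apply Lemma~\ref{ooo}. The key observation is that the chords and outer edges of $G$ together triangulate the interior of the outer cycle $C$, and this triangulation has a natural \emph{dual tree}. Specifically, since every bounded face of a maximal outerplanar graph is a triangle, define a tree $T$ whose vertices are the triangular faces of $G$ and whose edges join two faces that share a chord of $G$ (we deliberately ignore adjacencies across outer edges, so that $T$ has one vertex per interior triangle and its edges correspond bijectively to the chords of $G$). I first verify that $T$ is indeed a tree: it is connected because the triangulation is, and an easy count shows it has $n-2$ faces and $n-3$ chords, so $|E(T)|=|V(T)|-1$. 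Moreover every triangle has exactly three bounding edges, each either an outer edge or a chord, so $\Delta(T)\le 3$.

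Next I would apply Lemma~\ref{ooo} to $T$. It produces an edge $e\in E(T)$, corresponding to a chord $\{x,y\}$ of $G$, such that both components of $T-e$ have at least $\frac{(n-2)-1}{3}=\frac{n-3}{3}$ faces. The chord $\{x,y\}$ splits the disk bounded by $C$ into two closed regions, one for each component of $T-e$; the outer-cycle vertices strictly interior to one side correspond to the interior vertices between $x$ and $y$ in the clockwise direction, and those on the other side to the counterclockwise direction. The remaining step is to convert the face counts into vertex counts: a region triangulated by $m$ faces and bounded by the chord $\{x,y\}$ plus an arc of $C$ contains exactly $m+1$ arc-vertices (including $x,y$), hence $m-1$ interior vertices strictly between $x$ and $y$ on that side. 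Since $m\ge\frac{n-3}{3}$ on each side, each side carries at least $\frac{n-3}{3}-1=\frac{n-6}{3}$ interior vertices, which exceeds $\frac{n}{3}$ only after a cleaner accounting.

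The main obstacle is precisely this bookkeeping: the bound $\frac{n-3}{3}$ from Lemma~\ref{ooo} is not quite strong enough to yield \emph{strictly greater than} $\frac{n}{3}$ on the nose, so I would sharpen the argument. The cleanest route is to apply Lemma~\ref{ooo} not to the full $(n-2)$-vertex dual tree but to account for the fact that the lemma guarantees each side has at least $\frac{n-3}{3}$ faces, giving each open side at least $\lceil\frac{n-3}{3}\rceil$ faces and hence comparably many arc-vertices; one then checks that each arc, consisting of outer-cycle vertices plus the shared endpoints, contains more than $\frac{n}{3}$ strictly-interior vertices once $n$ is large. In other words, I would track endpoints carefully—each of $x,y$ lies on the boundary of both regions and should not be double-counted—and note that the two arcs together with $\{x,y\}$ partition all $n$ vertices, so if one side had at most $\frac{n}{3}$ interior vertices the other would be forced to have more than $\frac{2n}{3}$, contradicting the balance Lemma~\ref{ooo} provides. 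This balancing inequality, rather than the raw face count, is what makes both sides simultaneously exceed $\frac{n}{3}$, and is the crux of the proof.
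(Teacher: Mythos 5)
Your construction of the dual tree and the application of Lemma~\ref{ooo} follow exactly the paper's route, and that part is sound: $T$ has $n-2$ vertices and maximum degree at most $3$, and deleting the edge of $T$ corresponding to a chord $\{x,y\}$ separates the triangles lying on the two sides of that chord. But your bookkeeping then goes wrong twice. First, the face-to-vertex conversion is off by one: a region bounded by the chord $\{x,y\}$ and an arc of the outer cycle that contains $m$ triangles is a polygon with $m+2$ vertices, so it has exactly $m$ interior vertices (check $m=1$: a single triangle has one vertex besides $x$ and $y$), not $m-1$ as you claim. Second, and more seriously, the ``balancing inequality'' you invoke to close the deficit is not an argument: Lemma~\ref{ooo} provides only a \emph{lower} bound of $\frac{v(T)-1}{3}$ on both components of $T-e$, so a side with more than $\frac{2n}{3}$ interior vertices contradicts nothing, and no contradiction is ever reached.

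Moreover, this gap cannot be closed by any cleverer accounting, because the statement with ``interior vertices strictly greater than $\frac{n}{3}$'' is false in general. Take a central triangle and glue onto each of its three edges a fan of $k$ triangles; then $n=3k+3$, the dual tree is a spider with three legs of $k$ vertices each, and for \emph{every} chord one of the two sides contains at most $k=\frac{n-3}{3}<\frac{n}{3}$ triangles, hence at most $\frac{n-3}{3}$ interior vertices. What your argument (and the paper's own proof, which commits the same elision when it writes $\frac{n-3}{3}+2>\frac{n}{3}$) actually establishes is: each side has at least $\frac{n-3}{3}$ interior vertices, equivalently more than $\frac{n}{3}$ vertices once the endpoints $x$ and $y$ are counted. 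That endpoint-inclusive statement is precisely the paper's definition of a nice chord, and it is all that the asymptotic proof of Theorem~\ref{tms2} needs; the correct fix is to restate the conclusion in that form (or with the harmless $O(1)$ slack), not to search for a sharper balancing argument.
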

\begin{proof}
Let the outer cycle of $G$ be $C$. Each edge in $E(G)\backslash E(C)$ are contained in two triangular face. Let $T$ be the dual of $G$ minus the vertex representing the unbounded face. Clearly $T$ is a tree, with $\Delta(T)\leq 3$. Since $G$ contains $n-2$ triangles, $v(T)=n-2$.  

By Lemma~\ref{ooo}, there is an edge $e\in E(T)$ such that $T-e$ results components with at least $\frac{n-3}{3}$ vertices. Let the end vertices of $e$ be $x$ and $y$ and $e'$ be an edge in $G$ which is shared by the triangular faces represented by $x$ and $y$ respectively in the dual of $G$. Thus the number of vertices between the end vertices of $e'$ in counterclockwise and clockwise directions are at least $\frac{n-3}{3}+2>\frac{n}{3}$. This completes the proof of Lemma~\ref{oom}.\qedhere
\end{proof}
\begin{definition}
Let $G$ be a maximal outerplanar graph. A chord $\{x,y\}$ in $G$ is called a \say{nice chord} if the number of vertices between $x$ and $y$ (including $x$ and $y$) in counterclockwise and clockwise direction are at least $\frac{n}{3}.$  
\end{definition}
\begin{definition}
Let $G$ be a maximal outerplanar graph and $\{x,y\}$ be a chord in $G$. Let $I_1$ be a set of interior vertices between $x$ and $y$ in counterclockwise direction and $I_2$ be the set of interior vertices between $x$ and $y$ in clockwise direction. A path $P$ of length four in $G$ is called a crossing $P_5$ with respect to $\{x,y\}$ if $P$ contains a vertex in both $I_1$ and $I_2$. 
\end{definition}
Notice that every crossing $P_5$ with respect to the chord $\{x,y\}$ contains either $x$ or $y$.  We classify the crossing $P_5$'s with respect to $\{x,y\}$ as follows.
\begin{enumerate}
\item[i.] A crossing $P_5$ with respect to $\{x,y\}$ is called \textbf{Type-I crossing $P_5$} if it contains the edge $\{x,y\}$. Notice that, the edge $\{x,y\}$ can not be a terminal edge of a crossing $P_5$. Moreover, such a crossing $P_5$ contains only one vertex in $I_1$ and two vertices in $I_2$ or vice-versa. See Figure~\ref{fi2}.
\begin{figure}[ht]
\centering
\begin{tikzpicture}[scale=0.06]
\draw[dashed,red, ultra thick](0,-20)--(0,20)(-17.3,-10)--(0,-20)(0,20)--(17.3,10)--(17.3,-10);
\draw[thick] (0,0) circle (20cm);
\draw[fill=black](0,20)circle(40pt);
\draw[fill=black](0,-20)circle(40pt);
\draw[fill=black] (17.3,10) circle (40pt);
\draw[fill=black] (-17.3,-10) circle (40pt);
\draw[fill=black] (17.3,-10) circle (40pt);
\node at (0,-25) {$y$};
\node at (0,25) {$x$};
\end{tikzpicture}
\caption{An example of Type-I crossing $P_5$ with respect to $\{x,y\}$.}
\label{fi2}
\end{figure}
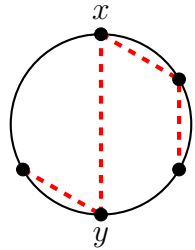
\item[ii.] A crossing $P_5$ with respect to $\{x,y\}$ is called \textbf{Type-II crossing $P_5$} if it contains both $x$ and $y$, but not the edge $\{x,y\}$. Notice that $x$ and $y$ can not be both terminal vertices. In this case we have two possibilities based on the position of $x$ and $y$.
\begin{itemize}
\item If both $x$ and $y$ are not terminal vertex of the $P_5$, then we call the crossing $P_5$ as \textbf{Type-II(A) crossing $P_5$}, see Figure~\ref{fi22} (the first two graphs).
\begin{figure}[ht]
\centering
\begin{tikzpicture}[scale=0.06]
\draw[ultra thick](0,-20)--(0,20);
\draw[dashed,red, ultra thick](-17.3,10)--(0,20)--(20,0)--(0,-20)--(-17.3,-10);
\draw[thick] (0,0) circle (20cm);
\draw[fill=black](0,20)circle(40pt);
\draw[fill=black](0,-20)circle(40pt);
\draw[fill=black] (-17.3,10) circle (40pt);
\draw[fill=black] (-17.3,-10) circle (40pt);
\draw[fill=black] (20,0) circle (40pt);
\node at (0,-25) {$y$};
\node at (0,25) {$x$};
\end{tikzpicture}\qquad\qquad
\begin{tikzpicture}[scale=0.06]
\draw[ultra thick](0,-20)--(0,20);
\draw[dashed,red, ultra thick](-17.3,10)--(0,20)--(20,0)--(0,-20)--(17.3,-10);
\draw[thick] (0,0) circle (20cm);
\draw[fill=black](0,20)circle(40pt);
\draw[fill=black](0,-20)circle(40pt);
\draw[fill=black] (-17.3,10) circle (40pt);
\draw[fill=black] (17.3,-10) circle (40pt);
\draw[fill=black] (20,0) circle (40pt);
\node at (0,-25) {$y$};
\node at (0,25) {$x$};
\end{tikzpicture}\qquad\qquad
\begin{tikzpicture}[scale=0.06]
\draw[ultra thick](0,-20)--(0,20);
\draw[dashed,red, ultra thick](-17.3,-10)--(0,-20)--(17.3,-10)--(17.3,10)--(0,20);
\draw[thick] (0,0) circle (20cm);
\draw[fill=black](0,20)circle(40pt);
\draw[fill=black](0,-20)circle(40pt);
\draw[fill=black] (17.3,10) circle (40pt);
\draw[fill=black] (17.3,-10) circle (40pt);
\draw[fill=black] (-17.3,-10) circle (40pt);
\node at (0,-25) {$y$};
\node at (0,25) {$x$};
\end{tikzpicture}\qquad\qquad
\begin{tikzpicture}[scale=0.06]
\draw[ultra thick](0,-20)--(0,20);
\draw[dashed,red, ultra thick](-17.3,10)--(-17.3,-10)--(0,-20)--(20,0)--(0,20);
\draw[thick] (0,0) circle (20cm);
\draw[fill=black](0,20)circle(40pt);
\draw[fill=black](0,-20)circle(40pt);
\draw[fill=black] (20,0) circle (40pt);
\draw[fill=black] (-17.3,10) circle (40pt);
\draw[fill=black] (-17.3,-10) circle (40pt);
\node at (0,-25) {$y$};
\node at (0,25) {$x$};
\end{tikzpicture}
\caption{Examples of Type-II crossing $P_5$ with respect to $\{x,y\}$.}
\label{fi22}
\end{figure}
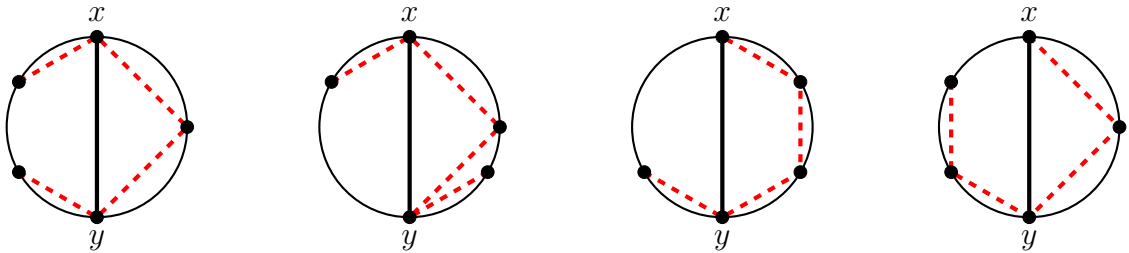
\item If either $x$ or $y$ is a terminal vertex of the $P_5$, then we call the $P_5$ as \textbf{Type-II(B) crossing $P_5$}, see Figure~\ref{fi22} (last two graphs).
\end{itemize}
\item [iii.]A crossing $P_5$ with respect to $\{x,y\}$ is called \textbf{Type-III crossing $P_5$} if the crossing $P_5$ contains either $x$ or $y$ but not both.
\begin{itemize}
\item If the crossing $P_5$ contains two vertices in $I_1$ and two vertices of $I_2$, then we call the $P_5$ as \textbf{Type-III(A) crossing $P_5$}, see Figure~\ref{fi11}(left).

\item If the crossing $P_5$ contains only one vertex in $I_1$ or $I_2$, then we call the $P_5$ as \textbf{Type-III(B) crossing $P_5$}, see Figure~\ref{fi11}(right).
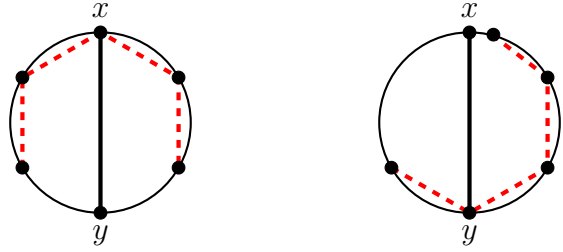
\begin{figure}[ht]
\centering
\begin{tikzpicture}[scale=0.06]
\draw[ultra thick](0,-20)--(0,20);
\draw[dashed,red, ultra thick](-17.3,-10)--(-17.3,10)--(0,20)--(17.3,10)--(17.3,-10);
\draw[thick] (0,0) circle (20cm);
\draw[fill=black](0,20)circle(40pt);
\draw[fill=black](0,-20)circle(40pt);
\draw[fill=black] (17.3,10) circle (40pt);
\draw[fill=black] (-17.3,10) circle (40pt);
\draw[fill=black] (-17.3,-10) circle (40pt);
\draw[fill=black] (17.3,-10) circle (40pt);
\node at (0,-25) {$y$};
\node at (0,25) {$x$};
\end{tikzpicture}\qquad\qquad\qquad
\begin{tikzpicture}[scale=0.06]
\draw[ultra thick](0,-20)--(0,20);
\draw[dashed,red, ultra thick](-17.3,-10)--(0,-20)--(17.3,-10)--(17.3,10)--(5.2,19.3);
\draw[thick] (0,0) circle (20cm);
\draw[fill=black](0,20)circle(40pt);
\draw[fill=black](0,-20)circle(40pt);
\draw[fill=black] (17.3,10) circle (40pt);
\draw[fill=black] (5.3,19.5) circle (40pt);
\draw[fill=black] (-17.3,-10) circle (40pt);
\draw[fill=black] (17.3,-10) circle (40pt);
\node at (0,-25) {$y$};
\node at (0,25) {$x$};
\end{tikzpicture}
\caption{Examples of Type-III crossing $P_5$ with respect to $\{x,y\}$.}
\label{fi11}
\end{figure}
\end{itemize}
\end{enumerate}

\begin{definition}
Let $G$ be a maximal outerplanar graph and $e$ be a chord in $G$. Denote the number of crossing $P_5$'s with respect to $e$ by $\mathcal{N}_{e}(P_5,G)$.
\end{definition}

\begin{lemma}\label{mainlema}
Let $G$ be an $n$-vertex maximal outerplanar graph and $e=\{x,y\}$ be a nice chord in $G$. Let $n_1$ be the number of interior vertices laying between $x$ and $y$ in counterclockwise direction and $n_2$ be the number of interior vertices between $x$ and $y$ in clockwise direction. Then $$\mathcal{N}_{e}(P_5,G)=\frac{17}{2}n_1n_2+O(n).$$
\end{lemma}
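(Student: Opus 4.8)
The plan is to reduce the count to tails hanging off the two hub vertices. Since $G$ is outerplanar and $\{x,y\}$ is a chord, no edge joins $I_1$ to $I_2$, so every crossing $P_5$ must run through $x$ or through $y$; moreover, within the triangulated region on each side the chord $\{x,y\}$ lies in a unique triangle, so $x$ and $y$ have exactly one common neighbour in $I_1$ and one in $I_2$. This is exactly what licenses the Type-I/II/III split: a crossing $P_5$ either uses both hubs (together with the edge $\{x,y\}$, giving Type~I, or a detour through the unique common neighbour, giving Type~II) or uses a single hub as its only gateway between the two sides (Type~III). First I would record, for each hub $w\in\{x,y\}$ and each side $I_j$, the number of \emph{tails} $w-p_1-\cdots-p_\ell$ with all $p_i\in I_j$ for $\ell=1,2,3$; call these $d^w_j,T^w_j,Q^w_j$.

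Every crossing $P_5$ splits at its gateway into a tail into $I_1$ and a tail into $I_2$ of complementary lengths, so each type's count is a sum of products of these tail-numbers (for instance Type-III(A) about $x$ contributes $T^x_1T^x_2$, and Type-III(B) contributes $d^x_1Q^x_2+Q^x_1d^x_2$), up to an $O(n)$ error absorbing paths that stay within distance one of the chord. The key uniform estimate is that every tail-number is linear in the corresponding $n_j$: from $e(R_j)=2(n_j+2)-3$ one gets $\sum_{p\in I_j}\deg(p)=O(n_j)$, hence $d^w_j,T^w_j,Q^w_j=O(n_j)$, and so each type contributes $O(n_1n_2)+O(n_i^2)$.

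The sharp constant then comes from an optimisation, and this is the step I expect to be the main obstacle. The per-type bounds are individually far larger than $\tfrac{17}{2}n_1n_2$; what forces the correct value is that the tail-numbers cannot all be large at once. The decisive constraints are that the two hubs cannot both fan linearly into the same side, giving $d^x_j+d^y_j\le n_j+O(1)$ from the uniqueness of common neighbours, and that length-$\ge2$ tails are inflated only by interior degree-$2$ vertices, whose number trades off against the hub degrees. Maximising the total of all type-contributions subject to these constraints should be a small quadratic program whose optimum is $\tfrac{17}{2}n_1n_2$, attained by the balance of hub-neighbours and degree-$2$ vertices realised in the construction $G_n$; the counting template is precisely that of Lemma~\ref{gen}, now bookkept by which side each terminal edge enters. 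Here the nice-chord hypothesis is essential, since $n_1,n_2\ge n/3$ keeps $n_1$ and $n_2$ within a constant factor, which is what lets the genuinely present $n_i^2$ terms (for example the Type-II(A) \say{double-fan} configuration) stay below $\tfrac{17}{2}n_1n_2$. The remaining care is bookkeeping: verifying each crossing $P_5$ is counted once across the types and both hubs, and peeling off the $O(n)$ boundary paths. Since Theorem~\ref{tms2} needs only the upper bound and it is matched by $G_n$, establishing $\mathcal{N}_e(P_5,G)\le\tfrac{17}{2}n_1n_2+O(n)$ with tightness at the extremal allocation yields the stated value.
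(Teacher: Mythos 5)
Your high-level plan coincides with the paper's: classify crossing $P_5$'s by how they pass through the hubs $x$ and $y$ (the Type-I/II/III split), write each type's count as a product of ``tail'' numbers on the two sides, and extract the constant $\tfrac{17}{2}$ from an optimization. But the proposal stops exactly where the lemma's actual content begins. You acknowledge that the per-type bounds are individually much larger than $\tfrac{17}{2}n_1n_2$ and that a joint quadratic program ``should'' have optimum $\tfrac{17}{2}n_1n_2$, yet you neither write down a sufficient constraint set nor solve the program; asserting that its optimum is $\tfrac{17}{2}n_1n_2$ is assuming the conclusion. The constraints you do name ($d^x_j+d^y_j\le n_j+O(1)$ from the unique common neighbour, plus a vague trade-off with degree-$2$ vertices) are true but far too weak: they do not pin down the length-$2$ and length-$3$ tail numbers, which is where the constant comes from.

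Worse, the quantitative tail bounds that make the optimization work are simply \emph{false} for a general maximal outerplanar graph, which is why the paper cannot argue as you propose. The paper first proves Claim~\ref{cv1} only for graphs in which consecutive hub-neighbours have at most one interior vertex between them; under that hypothesis one gets, e.g., that the number of $3$-vertex tails from $x$ into $I_2$ is at most $2p_1+6p_0$ ($p_1$ = hub-neighbours, $p_0$ = degree-$2$ interior vertices), and then an explicit two-stage optimization (the functions $g$ and $h$, their critical points and corner values, together with the nice-chord bound $\tfrac{n}{3}\le n_1\le\tfrac{2n}{3}$) yields $\tfrac{17}{2}n_1n_2+O(n)$. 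Without that structural hypothesis the bound fails: if $x$ has only two neighbours $\hat{x}_1,\hat{x}_2$ in $I_2$ and the $m$ vertices between them are triangulated as a fan from $\hat{x}_1$, then the tails $(x,\hat{x}_2,\hat{x}_1,w_i)$ and $(x,\hat{x}_1,w_i,w_{i\pm 1})$ give roughly $3m\approx 3n_2$ length-$3$ tails while $2p_1+6p_0=4$. Bridging this gap is the bulk of the paper's proof: the reduction operation and the long injective replacement argument of Claim~\ref{ki}, showing $\mathcal{N}_{e}(P_5,G)\le\mathcal{N}_{e}(P_5,G')$ for the rewired graph $G'$, so that after finitely many reductions one lands in the structured class covered by Claim~\ref{cv1}. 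Your proposal contains no substitute for this step, so what you have is a correct strategy outline with the two decisive ingredients --- the structural reduction and the solved optimization --- missing.
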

\begin{proof}
Let $C$ be the outer cycle of $G$ and $I_1$ and $I_2$ be the set of interior vertices between $x$ and $y$ in counterclockwise and clockwise direction respectively. Let $x$ be adjacent with $s_1$ vertices in $I_1$ and the vertices in counterclockwise direction be $\tilde{x}_1,\tilde{x}_2,\dots,\tilde{x}_{s_1}$. Let $y$ be adjacent with $t_1$ vertices in $I_1$ and the vertices in clockwise direction be $\tilde{y}_1,\tilde{y}_2,\dots,\tilde{y}_{t_1}$. Obviously $\tilde{y}_{t_1}$ and $\tilde{x}_{s_1}$ are identical vertices. Similarly let $x$ and $y$ be adjacent with $p_1$ and $q_1$ number of vertices in $I_2$ respectively. Let the vertices which are adjacent to $x$ in clockwise direction and adjacent to $y$ in counterclockwise direction respectively be $\hat{x}_1,\hat{x}_2,\dots,\hat{x}_{p_1}$ and $\hat{y}_1,\hat{y}_2,\dots,\hat{y}_{q_1}$. Obviously, $\hat{x}_{p_1}$ and $\hat{y}_{q_1}$ are identical vertices. The remaining vertices of $G$ are labeled as follows.
\begin{itemize}
\item Label the interior vertices in clockwise direction (if any) between $\hat{x}_i$ and $\hat{x}_{i+1}$ by $\hat{x}_i^1, \hat{x}_i^2,\dots \hat{x}_{i}^{n_i}$, $i\in[p_1-1]$.
\item Label the interior vertices between $\hat{y}_{j}$ and $\hat{y}_{j+1}$ in counterclockwise direction (if any) by $\hat{y}_{j}^1,\hat{y}_{j}^2,\dots, \hat{y}_{j}^{n_{j}}$, $j\in[q_1-1]$.
\item  Label the interior vertices in counterclockwise direction (if any) between $\tilde{x}_k$ and $\tilde{x}_{k+1}$ by $\tilde{x}_{k}^1,\tilde{x}_k^2,\dots,\tilde{x}_k^{n_k}$, $k\in[s_1-1]$.
\item Label the interior vertices in clockwise direction (if any) between $\tilde{y}_\ell$ and $\tilde{y}_{\ell+1}$ by $\tilde{y}_{\ell}^1,\tilde{y}_\ell^2,\dots,\tilde{y}_\ell^{n_\ell}$, $\ell\in[t_1-1]$. 
\end{itemize}

Since each face (except the unbounded face) of $G$ are triangular, for each $i\in[p_1-1], j\in[q_1-1], k\in[s_1-1]$ and $\ell\in[t_1-1]$, the edges $\{\hat{x}_{i},\hat{x}_{i+1}\}$, $\{\hat{y}_j,\hat{y}_{j+1}\}$, $\{\tilde{x}_{k},\tilde{x}_{k+1}\}$ and $\{\tilde{y}_{\ell},\tilde{y}_{\ell+1}\}$ are in $G$. The following claim is important to finish proof of the lemma.

\begin{claim}\label{cv1}
If a maximal outerplanar graph $H$ has the property that either there is no or only one interior vertex between each pair $(\tilde{x}_{k}, \tilde{x}_{k+1})$ counterclockwise direction, $(\tilde{y}_{\ell}, \tilde{y}_{\ell+1})$ clockwise direction, $(\hat{x}_{i},\hat{x}_{i+1})$ clockwise direction and $(\hat{y}_{j},\hat{y}_{j+1})$ counterclockwise direction, where $i\in[p_1-1], j\in[q_1-1], k\in[s_1-1]$ and $\ell\in[t_1-1]$, then $$\mathcal{N}_{e}(P_5,H)=\frac{17}{2}n_1n_2+O(n).$$    

\end{claim}
\begin{proof} 
Let the set of interior vertices between $x$ and $\tilde{x}_{s_1}$ in counter clockwise direction be $S$. Denote the size of $S$ with $s$ and the number of degree 2 vertices in $S$ by $s_0$. Denote the set of vertices between $y$ and $\tilde{y}_{t_1}$ in clockwise direction by $T$. Denote the size of $T$ and degree 2 vertices in $T$ by $t$ and $t_0$ respectively. Denote the set of vertices between $x$ and $\hat{x}_{p_1}$ in clockwise direction by $P$. Moreover denote the size of $P$ and degree 2 vertices in $P$ by $p$ and $p_0$  respectively. Finally, denote the set of vertices between $y$ and $\hat{y}_{q_1}$ in counterclockwise direction by $Q$. Moreover denote the size of $Q$ and degree 2 vertices in $Q$ by $q$ and $q_0$  respectively. 

Obviously $s=s_0+s_1, t=t_0+t_1, p=p_0+p_1$ and $q=q_0+q_1$. Moreover, $n_1=s+t-1$ and $n_2=p+q-1$. Next we count the number of crossing $P_5$'s with respect to the chord $e$, and we do this type-wise.

\begin{enumerate}
\item Type-I crossing $P_5$'s with respect to $e$: First we count the number of crossing $P_5$'s of the form $(\tilde{x}_{k},x,y,\hat{y}_{j},v)$. It can be checked that for a vertex $v\in Q$, the number of cherries with terminal vertices $y$ and $v$ is at most $2$. So, the number of Type-I crossing $P_5$'s of the form $(\tilde{x}_{k},x,y,\hat{y}_{j},v)$ is at most $2s_1q$. Considering the other possibilities, the number of Type-I crossing $P_5$'s is at most $2(s_1q+q_1s+t_1p+p_1t)$, which is at most $4sq+4tp$.

\item Type-II(A) crossing $P_5$'s with respect to $e$: It can be checked that the number of Type-II(A)
crossing $P_5$'s with respect to $e$ is at most $s_1t_1+p_1q_1+2(s_1q_1+t_1p_1)$, which is at most $st+pq+2(sq+tp)$.
\item Type-II(B) crossing $P_5$'s with respect to $e$: We ignore this kind of crossing $P_5$'s as the number is linear.
\item Type-III(A) crossing $P_5$'s with respect to $e$: First we count number of crossing $P_5$'s of the form $(v_1,\tilde{x}_{k},x,\hat{x}_{i},v_1)$. Clearly for a vertex $v_1\in S$  the number of cherries with terminal vertices $v_1$ and $x$ is at most $2$. Moreover, for a vertex $v_2\in P$, the number of cherries with terminal vertices $x$ and $v_2$ is at most $2$. Therefore, number of Type-III(A) crossing $P_5$ of the form $(v_1,\tilde{x}_{k},x,\hat{x}_{i},v_1)$ is $4sp$. Hence considering the other possibility, the number of Type-III(A) crossing $P_5$'s in $G$ is $4sp+4tq$.

\item Type-III(B) crossing $P_5$'s with respect to $e$: First we count the number of Type-III(B) crossing $P_5$'s of the form $(\tilde{x}_{k},x,\hat{x}_{i},v_1,v_2)$, where $k\in[s_1]$, $i\in[p_1]$ and $v_1,v_2\in P$. Considering the structure of $G$ of the claim, the edge $\{v_1,v_2\}$ is either $\{\hat{x}_{i},\hat{x}_{i+1}\}$ or $\{v,\hat{x}_{i}\}$ or $\{v,\hat{x}_{i+1}\}$, for some $i\in[p_1-1]$ and $v$ a degree 2 vertex in $P$. For each edge $\{\hat{x}_{i},\hat{x}_{i+1}\}$, $i\in[p_1-1]$, the number of $3$-paths of the form $(x,h,\hat{x}_{i},\hat{x}_{i+1})$ or $(x,h,\hat{x}_{i+1},\hat{x}_{i})$ is 2, namely $(x,\hat{x}_{i+2},\hat{x}_{i+1},\hat{x}_{i})$ and $(x,\hat{x}_{i-1},\hat{x}_{i},\hat{x}_{i+1})$. Let $v\in P$ be a degree 2 vertex be an interior vertex between $\hat{x}_{i}$ and $\hat{x}_{i+1}$ in clockwise direction. Notice that we have $p_1$ number of edges of the form $\{\hat{x}_i,\hat{x}_{i+1}\}$. Next we count the number of $3$-paths of the form $(x,h,\hat{x}_{i},v)$ and $(x,h,v,\hat{x}_{i})$ and similar argument can be given for the number of $3$-paths of the form $(x,h,\hat{x}_{i+1},v)$ and $(x,h,v,\hat{x}_{i+1})$. There are only three $3$-paths of the form $(x,h,\hat{x}_{i},v)$ and $(x,h,v,\hat{x}_{i})$, namely $(x,\hat{x}_{i-1},\hat{x}_{i},v)$, $(x,\hat{x}_{i+1},v,\hat{x}_{i})$ and $(x,\hat{x}_{i+1},\hat{x}_{i},v)$. Notice that for each degree 2 vertex $v\in P$, we have 2 edges this kind, namely $\{v,\hat{x}_{i}\}$ and $\{v,\hat{x}_{i+1}\}$.

Thus, the number of $3$-paths of the form $(x,\hat{x}_{i},v_1,v_2)$ , where $i\in[p_1]$ and $v_1,v_2\in P$ is at most $2p_1+6p_0$. Therefore, the number of Type-III(B) crossing $P_5$'s of the form $(\tilde{x}_{k},x,\hat{x}_{i},v_1,v_2)$, where $k\in[s_1]$, $i\in[p_1]$ and $v_1,v_2\in P$ is at most $s_1(2p_1+6p_0)$.

With similarly, the number of Type-III(B) crossing $P_5$'s of the form $(\hat{x}_{i},x,\hat{x}_{k},v_1,v_2)$, where $k\in[s_1]$, $i\in[p_1]$ and $v_1,v_2\in S$ is at most $p_1(2s_1+6s_0)$.

Thus taking the sum of the two and $p_0=p-p_1$ and $s_0=s-s_1$ the number of Type-III(B) crossing $P_5$'s with respect to $e$ and containing $x$ is at most $4s_1p_1+6s_1(p-p_1)+6p_1(s-s_1)$.

Notice that $p_1\in[1,p]$ and $s_1\in[1,s]$. Now define a function $g:[1,s]\times[1,p]\longrightarrow\mathbb{R}$ as  $g(s_1,p_1):=4s_1p_1+6s_1(p-p_1)+6p_1(s-s_1)$. It can be checked that $g$ attains maximum value at the critical point $(\frac{3}{4}s,\frac{3}{4}p)$. Moreover at this point the value of $g$ is $g(\frac{3s}{4},\frac{3p}{4})=\frac{9}{2}ps.$ That means the number of Type-III(B) crossing $P_5$'s containing the vertex $x$ is at most $\frac{9ps}{2}$. 

With similar argument, the number of Type-III(B) crossing $P_5$'s containing $y$ is at most $\frac{9}{2}tq$. Therefore, the number of Type-III(B) crossing $P_5$'s with respect to $e$ is at most $\frac{9}{2}ps+\frac{9}{2}tq$.   
\end{enumerate}
Summing up all the upper bound, the number of crossing $P_5$'s with respect to $e$ is at most 
$(4sq+4tp)+(st+pq+2sq+2tp)+(4sp+4tq)+(\frac{9}{2}ps+\frac{9}{2}tq)$, which is $8.5ps+8.5tq+6sq+6tp+st+pq.$ Considering that $q=n_1-p$ and $t=n_2-s$, the last expression is equal to $8.5n_1n_2+5sp+n_1s+n_2q-s^2-p^2-\frac{5}{2}n_2s-\frac{5}{2}n_1p$. Notice that $s\in[1,n_1]$ and $p\in[1,n_2]$. Define a function $h(s,p):[1,n_1] \times [1,n_2]\longrightarrow\mathbb{R}$ as $h(s,p):=8.5n_1n_2+5sp+n_1s+n_2q-s^2-p^2-\frac{5}{2}n_2s-\frac{5}{2}n_1p.$ We treat $h$ as a continuous real valued function in the rectangular region to find the maximum value of $h$. 

It can be checked that $h$ possesses a critical point $(\frac{1}{2}n_1,\frac{1}{2}n_2)$. The values of $h$ at the points $(1,1),(1,n_2),(n_1,1),(n_1,n_2)$ and $(\frac{1}{2}n_1,\frac{1}{2}n_2)$ are respectively, $f(1,1)=\frac{17}{2}n_1n_2-\frac{3}{2}n_1-\frac{3}{2}n_2+3$, $h(1,n_2)=6n_1n_2+n_2+\frac{5}{2}n_2-1$, $h(n_1,1)=n_1n_2+n_2+\frac{5}{2}n_1-1$, $h(n_1,n_2)=\frac{17}{2}n_1n_2$ and $h(\frac{1}{2}n_1,\frac{1}{2}n_2)=\frac{29}{4}n_1n_2+\frac{1}{4}n_1^2+\frac{1}{4}n_2^2=\frac{17}{2}n_1n_2+\frac{1}{4}n_1^2+\frac{1}{4}n_2^2-\frac{5}{4}n_1n_2$. 

To finish our proof we need to show that $h(\frac{1}{2}n_1,\frac{1}{2}n_2)=\frac{17}{2}n_1n_2+O(n)$. Indeed, remember that $n=n_1+n_2$ and $\frac{n}{3}\leq n_1\leq \frac{2}{3}n$. Define a function $f(n_1):[\frac{1}{3}n,\frac{2}{3}n]\longrightarrow\mathbb{R}$ as $f(n_1)=\frac{1}{4}n_1^2+\frac{1}{4}(n-n_1)^2-\frac{5}{4}n_1(n-n_1)$. It is easy to check that $f$ attains maximum value when $n_1=\frac{n}{2}$. Moreover the maximum value of $f$ is $f(\frac{1}{2}n)=-\frac{3}{16}n^2$. 
This completes the proof of Claim~\ref{cv1}.\qedhere
\end{proof}

From Claim~\ref{cv1}, without loss of generality we may assume that $i^*\in [p_1-1]$ is the smallest integer such that there is at least two interior vertices in clockwise direction between $\hat{x}_{i^*}$ and $\hat{x}_{i^*+1}$. Hence, there is no or one interior vertex in clockwise direction between $\hat{x}_i$ and $\hat{x}_{i+1}$, for each $i\in[i^*-1]$. Notice that $\{\hat{x}_{i^*}, \hat{x}_{i^*+1}\}$ is an edge in $G$.

Let $I^*$ be the set of interior vertices between $\hat{x}_{i^*}^1$ and $\hat{x}_{i^*+1}$(clockwise direction) which are adjacent to $\hat{x}_{i^*}$. Denote $I=I^*\cup\{\hat{x}_{i^*+1}\}$.  

Let $G'$ be the maximal outerplanar graph obtained from $G$ with three consecutive operations. Call the operations collectively as a \textit{reduction operation}.
\begin{enumerate}
\item Delete the edges $\{\hat{x}_{i^*}, w\}$, for each $w\in I$.
\item Add the edges $\{\hat{x}_{i^*}^1 ,w\}$,  for each $w\in I$. Where $\hat{x}_{i^*}^1$ is the first interior vertex in clockwise direction between $\hat{x}_{i^*}$ and $\hat{x}_{i^*+1}$. Notice that there is exactly one vertex in $I$ which is adjacent to $\hat{x}_{i^*}^1.$ Denote the vertex by $w^*$. Notice that $w^*$ can be $\hat{x}_{i^*+1}$.
\item Delete the multiple edge which results from procedure (2) and finally add the edge $\{x,\hat{x}_{i^*}^1\}$.
\end{enumerate}
We complete our proof by showing the following important claim.
\begin{claim}\label{ki}
$\mathcal{N}_{e}(P_5,G)\leq\mathcal{N}_{e}(P_5,G')$.
\end{claim}
\begin{proof}

It is enough to show that for each crossing $P_5$'s with respect to $e$ containing the edges $\{\hat{x}_{i^*},w\}$ in $G$, where $w\in I$, we correspondingly find a unique crossing $P_5$'s with respect to $e$ in $G'$ but not in $G$. We distinguish two cases considering the position of $i^*$.

\subsubsection*{Case 1: $i^*\neq p_1-1$.} 

\begin{itemize}
\item Type-I crossing $P_5$'s in $G$ containing the edges $\{\hat{x}_{i^*},w\}$: 
Type-I crossing $P_5$'s in $G$ containing $\{\hat{x}_{i^*},w\}$ are either $(\tilde{y}_{\ell},y,x,\hat{x}_{i^*},w)$ or  $(\tilde{y}_{\ell},y,x,\hat{x}_{i^*+1},\hat{x}_{i^*})$, $\ell\in[t_1]$.

For $(\tilde{y}_{\ell},y,x,\hat{x}_{i^*},w)$, we take the crossing $P_5$'s in $G'$ of the form $(\tilde{y}_{\ell},y,x,\hat{x}_{i^*}^1,w)$.

We consider two cases concerning the crossing $P_5$'s of the form $(\tilde{y}_{\ell},y,x,\hat{x}_{i^*+1},\hat{x}_{i^*})$ in $G$. If $w^*=\hat{x}_{i^*+1}$, we take the replacement crossing $P_5$ $(\tilde{y}_{\ell},y,x,\hat{x}_{i^*}^1,\hat{x}_{i^*})$. If $w^*\neq \hat{x}_{i^*+1}$, we take the replacing crossing $P_5$ $(\tilde{y}_{\ell},y,x,\hat{x}_{i^*+1},\hat{x}_{i^*}^1)$.

\item Type-II(A) crossing $P_5$'s in $G$ containing the edges $\{\hat{x}_{i^*},w\}$: It can be seen that there is no Type-II(A) crossing $P_5$'s in $G$ containing the edges $\{\hat{x}_{i^*},w\}$.

\item Type-II(B) crossing $P_5$'s in $G$ containing the edges $\{\hat{x}_{i^*},w\}$:  
In this case the only crossing $P_5$'s are of the form $(y,\tilde{x}_{s_1},x,\hat{x}_{i^*},w)$ and $(y,\tilde{x}_{s_1},x,\hat{x}_{i^*+1},\hat{x}_{i^*})$. 

For the former crossing $P_5$'s we can take the replacing crossing $P_5$'s of the form $(y,\tilde{x}_{s_1},x,\hat{x}_{i^*}^1,w)$. For the later case we consider two cases. If $w^*=\hat{x}_{i^*+1}$, then we take the replacing crossing $P_5$ $(y,\tilde{x}_{s_1},x,\hat{x}_{i^*}^1,\hat{x}_{i^*})$. If $w^*\neq \hat{x}_{i^*+1}$, we take the crossing $P_5$ of the form $(y,\tilde{x}_{s_1},x,\hat{x}_{i^*+1},\hat{x}_{i^*}^1)$. 

\item Type-III(A) crossing $P_5$'s in $G$ containing the edges $\{\hat{x}_{i^*},w\}$: 
The crossing $P_5$'s are of the form $(z,\tilde{x}_{k},x,\hat{x}_{i^*},w)$ and $(z,\tilde{x}_k,x,\hat{x}_{i^*+1},\hat{x}_{i^*})$, $k\in[s_1]$ and $z$ ($\neq y$) is a vertex adjacent to $\hat{x}_{k}$. 

For the former case we take the crossing $P_5$'s in $G'$ of the form $(z,\tilde{x}_k,x,\hat{x}_{i^*}^1,w)$.  For the latter case we consider two cases. If $w^*=\hat{x}_{i^*+1}$, we take the replacing crossing $P_5$, $(z,\tilde{x}_{k},x,\hat{x}_{i^*}^1,\hat{x}_{i^*})$. If $w^*\neq \hat{x}_{i^*+1}$, we take replacement crossing $P_5$ of the form $(z,\tilde{x}_{k},x,\hat{x}_{i^*+1},\hat{x}_{i^*}^1)$. 

\item Type-III(B) crossing $P_5$'s in $G$ containing the edges $\{\hat{x}_{i^*},w\}$: 
In this case we distinguish two cases.

\subsubsection*{Case 1.1: $i^*\neq p_1-2$.} 
The crossing $P_5$'s in $G$ containing $\{\hat{x}_{i^*},w\}$  and their respective replacement crossing $P_5$'s in $G'$ are considered below. 

\begin{enumerate}
\item For crossing $P_5$'s of the form $(\tilde{x}_k,x, \hat{x}_{i^*-1},\hat{x}_{i^*},w)$, where $k\in[s_1]$ and $w\neq w^*$, we take the replacement $(\tilde{x}_k,x, \hat{x}_{i^*},\hat{x}_{i^*}^1,w)$. Notice that when $w=w^*$, $\{\hat{x}_{i^*}^1,w^*\}$ is already an edge in $G$. In this case we replace the crossing $P_5$ $(\tilde{x}_k,x, \hat{x}_{i^*-1},\hat{x}_{i^*},w^*)$ with $(\tilde{x}_k,x, \hat{x}_{i^*}^1,\hat{x}_{i^*}^2,\hat{x}_{i^*}^3)$. Recall that there are at least two interior vertices between $\hat{x}_{i^*}$ and $\hat{x}_{i^*+1}$ in clockwise direction. If there are exactly two interior vertices, $\hat{x}_{i^*}^3=\hat{x}_{i^*+1}$.

\item For crossing $P_5$'s of the form $(\tilde{x}_k,x, \hat{x}_{i^*},w,u)$, where $k\in[s_1]$ and $u$ is a vertex adjacent to $w$ and $u\neq \hat{x}_{i^*}^1$, we replace it with by $(\tilde{x}_k,x, \hat{x}_{i^*}^1,w,u)$. For the crossing $P_5$'s of the form $(\tilde{x}_k,x, \hat{x}_{i^*},w^*,\hat{x}_{i^*}^1)$, we replace it by $(z_1,z_2,\tilde{x}_{k},x,\hat{x}_{i^*}^1)$, where $(z_1,z_2,\tilde{x}_{k})$ is a $2$-path with respect to vertex alignment in the outer cycle of $G$ in clockwise direction. 

\item For the crossing $P_5$'s of the form $(\tilde{x}_{k},x,\hat{x}_{i^*+1},\hat{x}_{i^*},u)$, where $k\in[s_1]$ and $u$ is a vertex adjacent to $\hat{x}_{i^*}$. We may consider two cases.

If $w^*= \hat{x}_{i^*+1}$, the only crossing $P_5$'s of the stated form are $(\tilde{x}_{k},x,\hat{x}_{i^*+1},\hat{x}_{i^*},\hat{x}_{i^*}^1)$, $(\tilde{x}_{k},x,\hat{x}_{i^*+1},\hat{x}_{i^*},\hat{x}_{i^*-1})$ and $(\tilde{x}_{k},x,\hat{x}_{i^*+1},\hat{x}_{i^*},\hat{x}_{i^*-1}^1)$. The last crossing $P_5$ may or may not exist, it depends with the existence of the vertex $\hat{x}_{i^*-1}^1$. The respective replacement crossing $P_5$'s we take are $(z_2,\tilde{x}_{k},x,\hat{x}_{i^*}^1,\hat{x}_{i^*}^2)$, $(\tilde{x}_{k},x,\hat{x}_{i^*}^1,\hat{x}_{i^*},\\\hat{x}_{i^*-1})$ and $(\tilde{x}_{k},x,\hat{x}_{i^*}^1,\hat{x}_{i^*},\hat{x}_{i^*-1}^1)$, where $z_2$ is the vertex as defined in(2).

On the other hand, for the case that $w^*\neq rx_{i^*+1}$,  the crossing $P_5$'s of the stated form are $(\tilde{x}_{k},x,\hat{x}_{i^*+1},\hat{x}_{i^*},w)$, $(\tilde{x}_{k},x,\hat{x}_{i^*+1},\hat{x}_{i^*},\hat{x}_{i^*}^1)$, $(\tilde{x}_{k},x,\hat{x}_{i^*+1},\hat{x}_{i^*},\hat{x}_{i^*-1})$ and $(\tilde{x}_{k},x,\hat{x}_{i^*+1},\hat{x}_{i^*},\hat{x}_{i^*-1}^1)$. The last crossing $P_5$ may or may not exist. In this case we take the the replacement crossing $P_5$'s, $(\tilde{x}_{k},x,\hat{x}_{i^*+1},\hat{x}_{i^*}^1,w)$, $(\tilde{x}_{k},x,\hat{x}_{i^*+1},\\\hat{x}_{i^*}^1,\hat{x}_{i^*})$, $(\tilde{x}_{k},x,\hat{x}_{i^*}^1,\hat{x}_{i^*},\hat{x}_{i^*-1})$ and $(\tilde{x}_{k},x,\hat{x}_{i^*}^1,\hat{x}_{i^*},\hat{x}_{i^*-1}^1)$ respectively.  



\item For the case that $w^*\neq \hat{x}_{i^*+1}$, we have a crossing $P_5$ of the form $(\tilde{x}_{k},x, \hat{x}_{i^*+1},f,\\\hat{x}_{i^*})$, where $f$ is the second from the last $\hat{x}_{i^*+1}$ in $I$ such that $\{\hat{x}_{i^*},f\}$ is an edge in $G$. In this case we take replacement crossing $P_5$ $(z_3,z_4,\tilde{x}_{k},x,\hat{x}_{i^*}^1)$. where $(z_3,z_4,\tilde{x}_{k})$ is a $2$-path with respect to vertex alignment in the outer cycle of the $G$ in counterclockwise direction.

\item For the crossing $P_5$ of the form $(\tilde{x}_k, x, \hat{x}_{i^*+2},\hat{x}_{i^*+1},\hat{x}_{i^*})$. We distinguish two cases. If $w*=\hat{x}_{i^*+1}$, then we take the replacement crossing $P_5$ $(z_4,\tilde{x}_k,x,\hat{x}_{i^*}^1,\hat{x}_{i^*}^2)$. If $w^*\neq \hat{x}_{i^*+1}$, we take the replacement crossing $P_5$ $(\tilde{x}_k,x,\hat{x}_{i^*+2},\hat{x}_{i^*+1},\hat{x}_{i^*}^1)$.
\end{enumerate}

\subsubsection*{Case 1.2: $i^*= p_1-2$.}
Apart from the crossing $P_5$'s considered above, the crossing $P_5$'s which is not considered yet are Type-III(B) crossing $P_5$ of the form $(\tilde{y}_{\ell},y,\hat{x}_{p_1},\hat{x}_{p_1-1},\hat{x}_{p_1-2})$, where $\ell\in[t_1]$. Here we distinguish two cases. If $w^*=\hat{x}_{p_1-1}$, we take the replacing crossing $P_5$, $(\tilde{y}_{\ell},y,x,\hat{x}_{p_1-2}^1,\hat{x}_{p_1-2})$. If $w^*\neq \hat{x}_{p_1-1}$, we take the replacing crossing $P_5$ $(\tilde{y}_{\ell},y,\hat{x}_{p_1},\hat{x}_{p_1-1},\hat{x}_{p_1-2}^1)$.
\end{itemize}
\subsubsection*{Case 2: $i^*= p_1-1$.}  For completeness we consider crossing $P_5$'s of each type containing the edges $\{\hat{x}_{p_1-1},w\}$, where $w\in I$.
\begin{itemize}
\item Type-I crossing $P_5$'s in $G$ containing the edges $\{\hat{x}_{p_1-1},w\}$: Apart from crossing Type-I crossing $P_5$'s considered in Case 1, the Type-I crossing $P_5$ which is not considered in this case is $(\tilde{x}_{k},x,y,\hat{x}_{p_1},\hat{x}_{p_1-1})$. We consider two cases. If $w^*=\hat{x}_{p_1}$, then we take the replacement crossing $P_5$'s $(z_4,\hat{x}_{k},x,\hat{x}_{p_1-1}^1,\hat{x}_{p_1-1}^2)$, where the $z_4$ is the vertex whose definition is given in Case 1.1 (4). If $w^*\neq \hat{x}_{p_1}$, we take the replacement crossing $P_5$ of the form $(\tilde{x}_{k},x,y,\hat{x}_{p_1},\hat{x}_{p_1-1}^1)$.

\item Type-II(A) crossing $P_5$'s in $G$ containing the edges $\{\hat{x}_{p_1-1},w\}$: It can be seen that there is no Type-II(A) crossing $P_5$'s in $G$ containing the edges $\{\hat{x}_{p_1-1},w\}$.

\item Type-II(B) crossing $P_5$'s in $G$ containing the edges $\{\hat{x}_{p_1-1},w\}$: The crossing $P_5$'s of this kind which are not yet considered are $(\tilde{x}_{k},x,\hat{x}_{p_1-1},\hat{x}_{p_1},y)$ and $(\tilde{y}_{t},y,\hat{x}_{p_1},\hat{x}_{p_1-1},x)$,  where $k\in[s_1]$ and $\ell\in[t_1]$. In this case we take the replacement crossing $P_5$'s of the form $(\tilde{x}_{k},x,\hat{x}_{p_1-1}^1,\hat{x}_{p_1},y)$ and $(\tilde{y}_{t},y,\hat{x}_{p_1},\hat{x}_{p_1}^1,x)$ respectively.

\item Type-III(A) crossing $P_5$'s in $G$ containing the edges $\{\hat{x}_{p_1-1},w\}$: Apart from the crossing $P_5$'s of Type III(A) stated in Case 1, the following are the new Type-III(A) crossing $P_5$'s which are not yet considered. These are $(v,\tilde{y}_{\ell},y,\hat{x}_{p_1},\hat{x}_{p_1-1})$, where $\ell\in[t_1]$. We consider two cases. If $w^*=\hat{x}_{p_1}$, we take the replacement crossing $P_5$ of the form $(v,\tilde{y}_{\ell},y,x,\hat{x}_{p_1-1}^1)$. If $w^*\neq \hat{x}_{p_1}$, we take the replacement crossing $P_5$'s of the form $(v,\tilde{y}_{\ell},y,\hat{x}_{p_1},\hat{x}_{p_1-1}^1)$. 

\item Type-III(B) crossing $P_5$'s in $G$ containing the edges $\{rx_{p_1-1},w\}$:
In this case we do not have crossing $P_5$'s which are stated in Case 1.1(5). However we have new Type-III(B) crossing $P_5$'s. The following are these crossing $P_5$'s and their respective replacements.
\begin{enumerate}
\item Consider the crossing $P_5$'s of the form $(\tilde{y}_{\ell},y,\hat{x}_{p_1},\hat{x}_{p_1-1},u)$. We distinguish two cases. If $w^*=\hat{x}_{p_1}$, then there are only three crossing $P_5$'s of such kind. These are $(\tilde{y}_{\ell},y,\hat{x}_{p_1},\hat{x}_{p_1-1},\hat{x}_{p_1-1}^1)$, $(\tilde{y}_{\ell},y,\hat{x}_{p_1},\hat{x}_{p_1-1},\hat{x}_{p_1-2})$ and $(\tilde{y}_{\ell},y,\hat{x}_{p_1},\hat{x}_{p_1-1},\hat{x}_{p_1-2}^1)$. Notice that the existence of the last crossing $P_5$ depends on the existence of the vertex $\hat{x}_{p_1-2}^1$. We take the following respective replacement crossing $P_5$'s, $(u_1,\tilde{y}_{\ell},y,x,\\\hat{x}_{p_1-1}^1)$, $(u_2,\tilde{y}_{\ell},y,x,\hat{x}_{p_1-1}^1)$ and $(\tilde{y}_{\ell},y,x,\hat{x}_{p_1-1}^1,\hat{x}_{p_1-1}^2)$ respectively. It can be seen that non of these crossing $P_5$'s is used in any the previous crossing $P_5$'s. 

If $w^*=\hat{x}_{p_1}$, then the crossing $P_5$'s of such kind are $(\tilde{y}_{\ell},y,\hat{x}_{p_1},\hat{x}_{p_1-1},w)$, $(\tilde{y}_{\ell},y,\hat{x}_{p_1},\\\hat{x}_{p_1-1},\hat{x}_{p_1-1}^1)$, $(\tilde{y}_{\ell},y,\hat{x}_{p_1},\hat{x}_{p_1-1},\hat{x}_{p_1-2})$ and $(\tilde{y}_{\ell},y,\hat{x}_{p_1},\hat{x}_{p_1-1},\hat{x}_{p_1-2}^1)$. In this case we take the replacement crossing $P_5$'s of the form $(\tilde{y}_{\ell},y,\hat{x}_{p_1},\hat{x}_{p_1-1}^1,w)$, $(\tilde{y}_{\ell},y,\hat{x}_{p_1},\hat{x}_{p_1-1},\\\hat{x}_{p_1-1})$ $(u_1,\tilde{y}_{\ell},y,x,\hat{x}_{p_1-1}^1)$ and $(u_2,\tilde{y}_{\ell},y,x,\hat{x}_{p_1-1}^1)$. 

\item For the case that $w^*\neq \hat{x}_{p_1}$, we have a crossing $P_5$'s of the form $(\tilde{y}_{\ell},y,\hat{x}_{p_1},f,\\\hat{x}_{p_1-1})$, where $f$ is second vertex from the last $\hat{x}_{p_1}$ in $I$.
In this case we take the replacement crossing $P_5$ of the form $(\tilde{y}_{\ell},y,\tilde{x}_{s_1},x,\hat{x}_{p_1-1}^1)$.

\item Consider the crossing $P_5$'s of the form $(\tilde{y}_{\ell},y,\hat{y}_{q_1-1},\hat{x}_{p_1},\hat{x}_{p_1-1})$. In this case we take the replacement $(\tilde{y}_{\ell},y,\hat{x}_{p_1},x,\hat{x}_{p_1-1}^1)$.
\end{enumerate}
\end{itemize}

Therefore the number of crossing $P_5$'s with respect to $e$ in $G$ is at most the number of crossing $P_5$'s with respect to $e$ in $G'$. 
This completes the proof of Claim~\ref{ki}.\qedhere
\end{proof}

From Claim~\ref{ki}, we can proceed with the reduction operation procedure for each neighbours of $x$ and $y$ in $I_1$ and $I_2$ till we reach on the situation that, for each $i\in[p_1-1], j\in[q_1-1], k\in[s_1-1]$ and $\ell\in[t_1-1]$, there is no interior or one interior vertex between the pairs $(\tilde{x}_{k}, \tilde{x}_{k+1})$ in counterclockwise direction, $(\tilde{y}_{\ell}, \tilde{y}_{\ell+1})$ in clockwise direction, $(\hat{x}_{i},\hat{x}_{i+1})$ in clockwise direction and $(\hat{y}_{j},\hat{y}_{j+1})$ counterclockwise direction. 

Let the resulting graph at the end of a sequence of reduction operations be $H$. Thus $\mathcal{N}_{e}(P_5,G)\leq \mathcal{N}_{e}(P_5,H).$  But by Claim~\ref{cv1}, 
 $\mathcal{N}_{e}(P_5,H)=\frac{17}{2}n_1n_2+O(n)$. Therefore,  $$\mathcal{N}_{e}(P_5,G)=\frac{17}{2}n_1n_2+O(n).$$ 

This completes the proof of Lemma~\ref{mainlema}.\qedhere
\end{proof}

Next we finish the proof of Theorem~\ref{tms2}. Let the graph induced by $I_1\cup\{x,y\}$ and $I_2\cup \{x,y\}$ be $G_1$ and $G_2$ respectively. Since $e$ is a nice chord, by induction $\mathcal{N}(P_5,G_1)=\frac{17}{4}n_1^2+O(n)$ and  $\mathcal{N}(P_5,G_2)=\frac{17}{4}n_2^2+O(n)$. From Lemma~\ref{mainlema}, $\N_{e}(P_5,G)=\frac{17}{2}n_1n_2+O(n).$

Therefore,
\begin{align*}
\mathcal{N}(P_5,G)&=\mathcal{N}(P_5,G_1)+\mathcal{N}(P_5,G_2)+\mathcal{N}_{e}(P_5,G)\\&=\frac{17}{4}n_1^2+O(n)+\frac{17}{4}n_2^2+O(n)+\frac{17}{2}n_1n_2+O(n)\\&=\frac{17}{4}(n_1+n_2)^2+O(n)\\&=\frac{17}{4}n^2+O(n).
\end{align*}
This completes the proof of Theorem~\ref{tms2}.\qedhere
\end{proof}
\section{Concluding remarks and conjectures}
Considering the complexity of the proof we have for a best asymptotic value of $f_{\mathcal{OP}}(n,P_5)$, it might be not easy to determine a best asymptotic value of the generalized outerplanar Tur\'an number of short paths. We pose the following conjecture related to the generalized outerplanar Tur\'an number of the $P_6$.
\begin{conjecture}
$f_{\mathcal{OP}}(n,P_6)=11n^2+\Theta(n)$.
\end{conjecture}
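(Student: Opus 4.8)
The plan is to mirror the two-part strategy used for $f_{\mathcal{OP}}(n,P_5)$: establish the lower bound by an explicit construction, and obtain the matching upper bound by a nice chord decomposition combined with induction on $n$. For the lower bound I would introduce a one-parameter family of $n$-vertex maximal outerplanar graphs built on an outer cycle $C_n$ with a single dominant vertex $v$ adjacent to a prescribed fraction of the remaining vertices, the rest being triangulated so that a fixed proportion $\alpha$ of the vertices have degree $2$. Counting the $P_6$'s that use $v$ as an interior vertex (classified by how many of the two, three, or four neighbours of $v$ on the path have degree $2$ versus high degree) yields a quadratic $c(\alpha)n^2+O(n)$; I would then optimize $c(\alpha)$ over $\alpha$ and verify that the maximum equals $11$, giving $f_{\mathcal{OP}}(n,P_6)\geq 11n^2+O(n)$. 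As in Lemma~\ref{gen}, I expect the count to be insensitive to how the degree-$2$ vertices are distributed along the cycle, which simplifies the bookkeeping.

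For the upper bound I would take an arbitrary $n$-vertex maximal outerplanar graph $G$ and, by Lemma~\ref{oom}, select a nice chord $e=\{x,y\}$ whose two sides $I_1,I_2$ each contain more than $n/3$ interior vertices. Writing $n_i=|I_i|$ and letting $G_1,G_2$ be the subgraphs induced by $I_1\cup\{x,y\}$ and $I_2\cup\{x,y\}$, every copy of $P_6$ is either contained in $G_1$, contained in $G_2$, or is a crossing $P_6$ using a vertex of each side; since $x,y$ separate $I_1$ from $I_2$ this classification is a genuine partition, so
\begin{equation*}
\mathcal{N}(P_6,G)=\mathcal{N}(P_6,G_1)+\mathcal{N}(P_6,G_2)+\mathcal{N}_{e}(P_6,G).
\end{equation*}
The decisive ingredient is then the crossing estimate
\begin{equation*}
\mathcal{N}_{e}(P_6,G)=22\,n_1n_2+O(n),
\end{equation*}
the analogue of Lemma~\ref{mainlema}. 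Granting this, the inductive hypotheses $\mathcal{N}(P_6,G_i)=11n_i^2+O(n)$ close the recursion because $11n_1^2+11n_2^2+22n_1n_2=11(n_1+n_2)^2=11n^2+O(n)$.

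To prove the crossing estimate I would follow the $P_5$ template. A crossing $P_6$ must pass through $x$ or $y$, so I would first classify crossing $P_6$'s into types according to (i) whether the path contains $x$, $y$, or both, (ii) whether it uses the edge $\{x,y\}$, and (iii) how its six vertices split between $I_1$, $I_2$, and $\{x,y\}$; because $P_6$ has one more vertex than $P_5$, each of the former types now branches into several subtypes. For each type I would bound the count by a product of the form (number of short paths hanging off $x$ or $y$ on one side) times (number on the other side), parametrized by how many neighbours of $x,y$ on each side carry degree $2$. A reduction operation identical in spirit to the one preceding Claim~\ref{ki} would normalize $G$ so that consecutive neighbours of $x$ (resp.\ $y$) are separated by at most one interior vertex; I would prove, by exhibiting an explicit injection from the crossing $P_6$'s destroyed by the operation into new crossing $P_6$'s it creates, that the reduction does not decrease $\mathcal{N}_{e}(P_6,G)$. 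Finally, on the normalized graph the total is a concrete quadratic in the split parameters (playing the role of $s,t,p,q$ in Lemma~\ref{mainlema}); maximizing this quadratic over the feasible region should return the leading term $22\,n_1n_2$.

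The hard part will be the crossing estimate, and within it two points in particular. First, the type enumeration for $P_6$ is substantially larger than for $P_5$: a five-edge path crossing through $x$ and/or $y$ admits many more internal configurations, and keeping the injection in the reduction step well defined and disjoint across all of them (the analogue of the lengthy Case~1/Case~2 analysis behind Claim~\ref{ki}) is where errors are most likely to hide. Second, pinning the constant to exactly $11$ requires the optimization of the final quadratic to land precisely on $22\,n_1n_2$; if instead a different coefficient emerges, it would either refute the conjectured constant or signal that the extremal construction in the lower bound is not the fan-plus-degree-$2$ family and must be reoptimized. Confirming that the lower-bound optimum and the upper-bound optimum agree at $11$ is the crux of establishing the conjecture.
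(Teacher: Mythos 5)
The statement you are trying to prove is a \emph{conjecture} in the paper: the authors offer no proof of the upper bound, only the lower-bound construction of Figure~\ref{fig2sr} (a fan in which half the vertices have degree~$2$), for which they assert $\mathcal{N}(P_6,G_n)=11n^2+\Omega(n)$. Your proposal is a reasonable blueprint --- it correctly mirrors the $P_5$ machinery (nice chord via Lemma~\ref{oom}, the decomposition $\mathcal{N}(P_6,G)=\mathcal{N}(P_6,G_1)+\mathcal{N}(P_6,G_2)+\mathcal{N}_e(P_6,G)$, a normalization step in the spirit of Claim~\ref{ki}, and a final quadratic optimization as in Claim~\ref{cv1}) --- but it is a plan, not a proof. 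The two steps you yourself flag as ``the hard part'' are precisely the entire open content of the conjecture: the crossing estimate $\mathcal{N}_e(P_6,G)=22\,n_1n_2+O(n)$ is nowhere established (no type enumeration is carried out, no injection for the reduction operation is exhibited, no quadratic is written down and maximized), and the lower-bound optimization over your parameter $\alpha$ is not performed, so neither the coefficient $22$ nor the constant $11$ is actually derived. Since for $P_5$ the analogous Claim~\ref{ki} required a delicate multi-page case analysis whose correctness hinges on the replacement paths being genuinely new and pairwise distinct, and since a crossing $P_6$ admits substantially more configurations through $x$ and $y$ (including paths where neither endpoint nor the second vertex is $x$ or $y$), there is no guarantee the injection survives the extension, nor that the optimum of the resulting quadratic lands on $22\,n_1n_2$ rather than some other coefficient.

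One further technical point you should be aware of if you pursue this: the recursion $\mathcal{N}(P_6,G)\le \mathcal{N}(P_6,G_1)+\mathcal{N}(P_6,G_2)+22\,n_1n_2+O(n)$ with $n_1+n_2=n-2$ and $n/3\le n_1\le 2n/3$ does not by itself close to an $11n^2+\Theta(n)$ bound: the additive $O(n)$ errors accumulate over the $\Theta(\log n)$ levels of the chord decomposition, and the naive solution of the recurrence is $11n^2+O(n\log n)$. To land on $\Theta(n)$ as conjectured you would need either a sharper, non-asymptotic form of the crossing lemma (with an explicit negative-definite correction in the split parameters, as the computation at the end of Claim~\ref{cv1} suggests is available for $P_5$) or a different bookkeeping of the error term; as written, your induction hypothesis $\mathcal{N}(P_6,G_i)=11n_i^2+O(n)$ is not preserved by the recursion step.
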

The following construction of an $n$-vertex maximal outerplanar graph $G_n$ verifies the lower bound is attainable. $G_n$ contains $\frac{n}{2}$ degree-2 vertices and all the remaining $\frac{n}{2}-1$ vertices are adjacent to a vertex, say $v$, in $G_n$ (see Figure~\ref{fig2sr}). It can be checked that $\N(P_6, G_n)=11n^2 +\Omega(n).$ 

\begin{figure}[ht]
\centering
\begin{tikzpicture}[scale=0.2]
\draw[thick](0,20)--(10,-17.3)(0,20)--(17.3,-10)(0,20)--(20,0)(0,-20)--(0,20)(0,20)--(17.3,10)(0,20)--(10,17.3);
\draw[thick](0,20)--(-10,-17.3)(0,20)--(-17.3,-10)(0,20)--(-20,0)(0,-20)--(0,20)(0,20)--(-17.3,10)(0,20)--(-10,17.3);
\draw[black,thick](10,17.3)--(17.3,10)--(20,0)--(17.3,-10)--(10,-17.3)--(0,-20);
\draw[black,thick](-10,17.3)--(-17.3,10)--(-20,0)--(-17.3,-10)--(-10,-17.3)--(0,-20);
\draw[fill=black] (19.3,5.2) circle (15pt);
\draw[fill=black] (-19.3,5.2) circle (15pt);
\draw[fill=black] (-19.3,-5.2) circle (15pt);
\draw[fill=black] (19.3,-5.2) circle (15pt);
\draw[fill=black] (14,14) circle (15pt);
\draw[fill=black] (-14,14) circle (15pt);
\draw[fill=black] (-14,-14) circle (15pt);
\draw[fill=black] (14,-14) circle (15pt);
\draw[fill=black] (5.2,19.3) circle (15pt);
\draw[fill=black] (-5.2,19.3) circle (15pt);
\draw[fill=black] (-5.2,-19.3) circle (15pt);
\draw[fill=black] (5.2,-19.3) circle (15pt);
\draw[thick] (0,0) circle (20cm);
\draw[fill=black] (20,0) circle (15pt);
\draw[fill=black] (-20,0) circle (15pt);
\draw[fill=black] (0,20) circle (15pt);
\draw[fill=black] (0,-20) circle (15pt);
\draw[fill=black] (17.3,10) circle (15pt);
\draw[fill=black] (10,17.3) circle (15pt);
\draw[fill=black] (-17.3,10) circle (15pt);
\draw[fill=black] (-10,17.3) circle (15pt);
\draw[fill=black] (-17.3,-10) circle (15pt);
\draw[fill=black] (-10,-17.3) circle (15pt);
\draw[fill=black] (17.3,-10) circle (15pt);
\draw[fill=black] (10,-17.3) circle (15pt);
\node at (0,23) {$v$};
\end{tikzpicture}
\caption{A maximal outerplanar graph $G_n$ containing roughly $11n^2$ $P_6$'s.}
\label{fig2sr}
\end{figure}
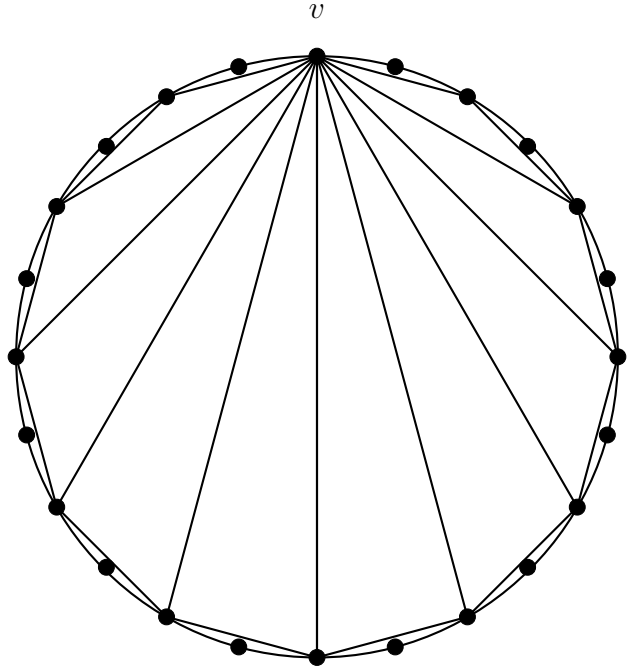
\section*{Acknowledgments}
The research of all the authors was partially supported by the National Research, Development and Innovation Office NKFIH, grants  K132696, and K126853.

\section*{Dedications}
The \textbf{second author} would like to dedicate this work to the memory of all innocent \textbf{Ethiopians} at every corner of the country who lost their life as a result of the \textbf{ethnic politics and hatreds} in the past \textbf{four dark years}.
\begin{figure}[ht]
\centering
\includegraphics[scale=0.5]{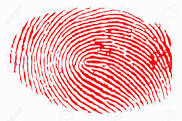}
\end{figure}

\section*{Conflicts of interest}
The authors declare that they have no conflicts of interest.


\end{document}